\newtheorem{theorem}{Theorem}[section]
\newtheorem*{theorem*}{Theorem}
\newtheorem{lemma}[theorem]{Lemma}
\newtheorem{proposition}[theorem]{Proposition}
\theoremstyle{definition}
\newtheorem{definition}[theorem]{Definition}
\theoremstyle{remark}
\newtheorem{remark}[theorem]{Remark}
\numberwithin{equation}{section}
\DeclareMathOperator{\supp}{supp}
\begin{document}
	
\title{On Energy Conservation for the Hydrostatic Euler Equations: An Onsager Conjecture}
	
\author[]{Daniel W. Boutros\footnote{Department of Applied Mathematics and Theoretical Physics, University of Cambridge, Cambridge CB3 0WA UK. Email: \textsf{dwb42@cam.ac.uk}}\space,  Simon Markfelder\footnote{Institute of Mathematics, University of W{\"u}rzburg, Emil-Fischer-Str. 40, 97074 W{\"u}rzburg, Germany. Email: \textsf{simon.markfelder@gmail.com}} \space and Edriss S. Titi\footnote{Department of Mathematics, Texas A\&M University, College Station, TX 77843-3368, USA; Department of Applied Mathematics and Theoretical Physics, University of Cambridge, Cambridge CB3 0WA UK; also Department of Computer Science and Applied Mathematics, Weizmann Institute of Science, Rehovot 76100, Israel. Emails: \textsf{titi@math.tamu.edu} \; \textsf{Edriss.Titi@maths.cam.ac.uk} \; \textsf{edriss.titi@weizmann.ac.il}}}

\date{June 13, 2023}

\maketitle

\begin{abstract}
Onsager's conjecture, which relates the conservation of energy to the regularity of weak solutions of the Euler equations, was completely resolved in recent years. In this work, we pursue an analogue of Onsager's conjecture in the context of the \textit{hydrostatic} Euler equations (also known as the inviscid primitive equations of oceanic and atmospheric dynamics). In this case the relevant conserved quantity is the horizontal kinetic energy.

We first consider the standard notion of weak solution which is commonly used in the literature. We show that if the horizontal velocity $(u,v)$ is sufficiently regular then the horizontal kinetic energy is conserved. Interestingly, the spatial H\"older regularity exponent which is sufficient for energy conservation in the context of the hydrostatic Euler equations is $\frac{1}{2}$ and hence larger than the corresponding regularity exponent for the Euler equations (which is $\frac{1}{3}$).

This is due to the anisotropic regularity of the velocity field: Unlike the Euler equations, in the case of the hydrostatic Euler equations the vertical velocity $w$ is one degree spatially less regular with respect to the horizontal variables, compared to the horizontal velocity $(u,v)$. Since the standard notion of weak solution is not able to deal with this anisotropy properly, we introduce two new notions of weak solutions for which the vertical part of the nonlinearity is interpreted as a paraproduct. We finally prove several sufficient conditions for such weak solutions to conserve energy.
	
\end{abstract}

\noindent \textbf{Keywords:} Onsager's conjecture, energy conservation, hydrostatic Euler equations, primitive equations, weak solutions

\vspace{0.1cm} \noindent \textbf{Mathematics Subject Classification:} 76B99 (primary), 35Q35, 35D30 (secondary)

	
\bibliographystyle{abbrv}
\section{Introduction}
In this paper, we study the three-dimensional hydrostatic Euler equations with rotation (also known as the reduced inviscid primitive equations of planetary scale oceanic and atmospheric dynamics, i.e. without coupling to temperature), which are given \cite{cao}
\begin{align}
&\partial_t u + \nabla \cdot (u \textbf{u}) - \Omega v + \partial_x p = 0,  \label{equationu} \\
&\partial_t v + \nabla \cdot (v \textbf{u}) + \Omega u + \partial_y p = 0, \label{equationv} \\
&\partial_z p = 0, \label{equationp} \\
&\nabla \cdot \mathbf{u} = 0. \label{equationw}
\end{align}
Here $\mathbf{u} = (u,v,w)$ is the velocity field, $p$ is the pressure and $\Omega \in \mathbb{R}$ is the Coriolis parameter. We will also denote by $\textbf{u}_H = (u,v)$ the horizontal velocity vector field for notational convenience.

It is already apparent from equation \eqref{equationw} that the vertical velocity $w$ will have one degree less spatial regularity in $x$ and $y$ than the horizontal velocities $u$ and $v$. The fact that the regularity of the velocity vector field is anisotropic will keep appearing when we study regularity criteria which are sufficient for energy conservation.

In terms of the domain, the physical boundary-value problem for the system \eqref{equationu}-\eqref{equationw} is usually studied in a three-dimensional channel
\begin{equation*}
M = \{ (x,y,z) \in \mathbb{R}^3 : 0 \leq z \leq L, (x,y) \in \mathbb{R}^2 \}
\end{equation*}
subject to the no-normal flow boundary condition in the vertical direction
\begin{equation} \label{nonormalbc}
w(x,y,0,t) = w (x,y,L,t) = 0.
\end{equation}
Furthermore, $u,v,w$ and $p$ are periodic of period $2 L$ in $x$ and $y$. We consider the above system with the initial conditions
\begin{equation} \label{inconditions}
u(x,y,z,0) = u_0 (x,y,z), \quad v(x,y,z,0) = v_0 (x,y,z).
\end{equation}
System \eqref{equationu}-\eqref{inconditions} is ill-posed in any Sobolev space \cite{ibrahim}, while it is locally well-posed in the space of analytic functions \cite{ghoul,kukavicatemamvicolziane}.

For classical (analytic) solutions of the equations in the channel, we can extend the solution in the $z$-direction by extending the vertical velocity $w$ to be odd in $z$ and the horizontal velocities $u$ and $v$ to be even in $z$ with periodic domain $[-L, L]$. The odd extension of $w$ is made possible by the no-normal flow boundary condition \eqref{nonormalbc}.

For classical solutions, these symmetry conditions are invariant under the equations \cite{ibrahim,ghoul}. Without loss of generality we take $L=\frac{1}{2}$ which makes the fundamental periodic domain to be the flat unit torus $\mathbb{T}^3$. Therefore for classical solutions, studying the physical boundary-value problem in the channel with the no-normal flow boundary conditions is equivalent to posing the problem on the torus subject to the symmetry conditions given above for $\mathbf{u}$ and $p$.

For this reason, throughout the paper, we will study weak solutions of the primitive equations on the torus with periodic boundary conditions and subject to the symmetry conditions. Notably, if such a weak solution satisfies suitable regularity assumptions such that one can make sense of the trace when $z = 0, L$, then the odd extension of $w$ actually satisfies the original physical boundary condition \eqref{nonormalbc}. Consequently, such a weak solution solves the original boundary value problem in $M$. In all the cases we consider in this paper, the weak solutions have sufficient regularity to interpret boundary condition \eqref{nonormalbc} in a trace sense.

One can observe that the horizontal kinetic energy
\begin{equation*}
E (t) = \int_{\mathbb{T}^3} \bigg( \lvert u (x,t) \rvert^2 + \lvert v (x,t) \rvert^2 \bigg) d x
\end{equation*}
is a formally conserved quantity (i.e. it is conserved by classical solutions). From now on, we will refer to this conserved quantity as the kinetic energy or simply the energy.

In this paper, we will show, loosely speaking, that the kinetic energy of weak solutions is conserved if $u$ and $v$ are H\"older continuous with exponent bigger than $\frac{1}{2}$ (we do not assume this for $w$). This is in contrast to the corresponding result for the Euler equations, where the critical exponent is $\frac{1}{3}$ (for $u, v$ and $w$). One might plausibly explain that the H\"older exponent being higher in this case is due to the lower spatial regularity of the vertical velocity $w$.

The primitive equations model the planetary scale dynamics of the ocean and the atmosphere.
These equations can be derived as an asymptotic limit of the small aspect ratio (the ratio of height to depth in a fluid) from the Rayleigh-B\'enard (Boussinesq) system. These equations have been the subject of a lot of attention over the years. So the overview that will be given below will necessary be incomplete in summarising all the relevant work.

The primitive equations were first introduced in \cite{richardson}. The viscous primitive equations were studied in a mathematical context for the first time in \cite{lionsjl1,lionsjl2,lionsjl3} (in which the global existence of weak solutions was established). The short time existence and uniqueness of strong solutions was established in \cite{guillen} (see also \cite{temamreview}).  The global existence of strong solutions was then proven in \cite{cao} (see also \cite{kobelkov}), with results for different boundary conditions in \cite{kukavica1,kukavica2}. In \cite{hieber} the case with less regular initial data was investigated using a semigroup method. The uniqueness of weak solutions was discussed in \cite{medjo,ju1,kukavicacontinuous,lidiscontinuous,breschuniqueness,bresch2D,ju2}.

The inviscid primitive equations have also been the subject of many works. The linear ill-posedness in Sobolev spaces for the inviscid case without rotation (i.e. $\Omega = 0$) was established in \cite{renardy}. Subsequently, the nonlinear ill-posedness was proven in \cite{han}.
Then it was shown in \cite{cao4,wong} that smooth solutions to the inviscid primitive equations without rotation can develop a finite-time singularity. These results were extended to the case with rotation in \cite{ibrahim}. Then in \cite{ghoul} it was proved that the lifespan of solutions can be prolonged by fast rotation for ``well-prepared'' initial data. The papers \cite{brenierhomogeneous,brenierremarks,grenier,masmoudi,kukavicalocal,kukavicatemamvicolziane,kukavicatemamvicolziane2,ghoul} investigated the local well-posedness for analytic initial data.

The nonuniqueness (and global existence) of weak solutions with initial data in $L^\infty$ for the inviscid primitive equations was established in \cite{chiodaroli}. The paper \cite{chiodaroli} also showed the non-conservation of energy. To the knowledge of the authors, it is the only paper that uses convex integration for these equations. It is then natural to consider an analogue of Onsager's conjecture for energy conservation in the hydrostatic Euler equations, which is the subject matter of this contribution.

Onsager's conjecture was first posed in \cite{onsager} for the Euler equations. It states that if a weak solution $\mathbf{u}$ of the Euler equations belongs to $L^3 ((0,T) ; C^{0, \alpha} (\mathbb{T}^3))$ with $\alpha > \frac{1}{3} $ then the spatial $L^2$ norm of $\mathbf{u}$ is conserved. In this context, the $L^2$ norm of the velocity field $\mathbf{u}$ has the physical interpretation of kinetic energy. The second half of the conjecture states that there exist weak solutions of the Euler equations for $\alpha < \frac{1}{3}$ which do not conserve energy. Yet, it was also shown in \cite{bardosloss} that there exist weak solutions that only have $L^2$ spatial regularity and which do conserve energy.

The importance of Onsager's conjecture was emphasised in \cite{eyink}, which gave a proof of the first half of the conjecture under a slightly more strict assumption than $C^{0,\alpha}$, for $\alpha > \frac{1}{3}$. A complete proof of the first half was then given in \cite{constantin}. A different approach was introduced in \cite{duchon} to prove the result, by deriving an equation of local energy balance with a defect term (which describes the energy flux at the vanishing limit of small scales) to capture a possible lack of regularity. The first half of the conjecture in the presence of wall boundaries was proven in \cite{titi2018} (and references therein).

The second half of Onsager's conjecture has also been proven by now. The first result in this direction was by \cite{scheffer}, which proved that there exist nontrivial weak solutions of the Euler equation with compact support in time (which as a result do not conserve energy).
The first construction of a dissipative solution of the Euler equations was given by \cite{shnirelman}. Subsequently techniques from convex integration were applied to the Euler equations to prove the existence of non-energy conserving solutions in $C^{0,1/3-}$ in \cite{isettproof}, after gradual improvement in a sequence of papers \cite{lellisinclusion,lelliscontinuous,lellisadmissibility,lellisdissipative,isettthesis,buckmaster,buckmasteralmost,buckmasteradmissible,isettendpoint,daneri} (and see references therein).

It is natural to ask whether one can consider analogues of Onsager's conjecture for related PDEs. In \cite{bardos2019} the authors consider the analogue of the first half of Onsager's conjecture and establish the universality of the Onsager exponent $\frac{1}{3}$ for conservation of entropy as well as other companion laws in a general class of conservation laws (see also \cite{bardos2019-2}). The paper \cite{beekiealpha} looked at the Euler-$\alpha$ model, while \cite{boutros} investigated several subgrid-scale $\alpha$-models of turbulence.

The goal of this paper is to establish a formulation of the Onsager conjecture for energy conservation of weak solutions to the hydrostatic Euler equations, in particular we prove results of the following type (where $B^s_{p,q} (\mathbb{T}^3)$ is a Besov space, see Definition \ref{besovdef}). The concept of a weak solution for the hydrostatic Euler equations will be made precise later.
\begin{theorem*}
Let $\mathbf{u}$ be a weak solution of the hydrostatic Euler equations (which assumes that $w \in L^2 ((0,T); L^2 (\mathbb{T}^3))$) and $(u,v) \in L^4 ( (0,T); B^\alpha_{4,\infty} (\mathbb{T}^3))$ with $\alpha > \frac{1}{2}$. Then the horizontal kinetic energy is conserved, in particular
\begin{equation*}
\lVert u(t_1, \cdot ) \rVert_{L^2}^2 + \lVert v(t_1, \cdot) \rVert^2_{L^2} = \lVert u(t_2, \cdot ) \rVert_{L^2}^2 + \lVert v(t_2, \cdot) \rVert^2_{L^2},
\end{equation*}
for almost every $t_1, t_2 \in (0,T)$.
\end{theorem*}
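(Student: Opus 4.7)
The plan is to adapt the Constantin--E--Titi mollification strategy to the anisotropic regularity of the hydrostatic system. Let $\eta_\varepsilon$ be a standard spatial mollifier on $\mathbb{T}^3$ and write $f_\varepsilon := f \ast_x \eta_\varepsilon$. Testing the weak formulation of \eqref{equationu}--\eqref{equationw} against $(x,t)\mapsto \eta_\varepsilon(y-x)\chi(t)$ shows that for a.e.\ $y$ the mollified fields satisfy
\begin{equation*}
\partial_t u_\varepsilon + \partial_x(u^2)_\varepsilon + \partial_y(uv)_\varepsilon + \partial_z(uw)_\varepsilon - \Omega v_\varepsilon + \partial_x p_\varepsilon = 0,
\end{equation*}
together with the analogue for $v_\varepsilon$, $\nabla\cdot\mathbf{u}_\varepsilon = 0$, and $\partial_z p_\varepsilon = 0$. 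Multiplying by $u_\varepsilon$ (resp.\ $v_\varepsilon$), summing, and integrating over $\mathbb{T}^3$, the Coriolis terms cancel, while the pressure contribution vanishes after integration by parts using $\partial_x u_\varepsilon + \partial_y v_\varepsilon = -\partial_z w_\varepsilon$ and the hydrostatic constraint $\partial_z p_\varepsilon = 0$.

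Inserting the Constantin--E--Titi identity $(fg)_\varepsilon = f_\varepsilon g_\varepsilon + r_\varepsilon(f,g)$ with
\begin{equation*}
r_\varepsilon(f,g)(x) = \int \eta_\varepsilon(y)\,\delta_y f(x)\,\delta_y g(x)\,\mathrm{d}y - (f-f_\varepsilon)(g-g_\varepsilon)(x), \qquad \delta_y f(x):=f(x-y)-f(x),
\end{equation*}
splits each nonlinear term into a smooth piece and a commutator. The smooth pieces assemble into $\int\bigl(u_\varepsilon(\mathbf{u}_\varepsilon\cdot\nabla)u_\varepsilon + v_\varepsilon(\mathbf{u}_\varepsilon\cdot\nabla)v_\varepsilon\bigr)\,\mathrm{d}x = \tfrac12\int(\mathbf{u}_\varepsilon\cdot\nabla)(u_\varepsilon^2+v_\varepsilon^2)\,\mathrm{d}x = 0$ by $\nabla\cdot\mathbf{u}_\varepsilon = 0$ and periodicity. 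After integration by parts the mollified energy identity reduces to $\tfrac12\tfrac{\mathrm{d}}{\mathrm{d}t}\bigl(\|u_\varepsilon\|_{L^2}^2+\|v_\varepsilon\|_{L^2}^2\bigr) = \mathcal{R}^H_\varepsilon(t) + \mathcal{R}^V_\varepsilon(t)$, where $\mathcal{R}^H_\varepsilon$ collects the four horizontal commutators $\int\partial_x u_\varepsilon\,r_\varepsilon(u,u)\,\mathrm{d}x$, $\int\partial_y u_\varepsilon\,r_\varepsilon(u,v)\,\mathrm{d}x$, $\int\partial_x v_\varepsilon\,r_\varepsilon(u,v)\,\mathrm{d}x$, $\int\partial_y v_\varepsilon\,r_\varepsilon(v,v)\,\mathrm{d}x$, and $\mathcal{R}^V_\varepsilon = \int\partial_z u_\varepsilon\,r_\varepsilon(u,w)\,\mathrm{d}x + \int\partial_z v_\varepsilon\,r_\varepsilon(v,w)\,\mathrm{d}x$. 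The horizontal piece is the classical Onsager estimate: $\|r_\varepsilon(f,g)\|_{L^2}\lesssim \varepsilon^{2\alpha}\|f\|_{B^\alpha_{4,\infty}}\|g\|_{B^\alpha_{4,\infty}}$ combined with $\|\nabla_H u_\varepsilon\|_{L^2}\lesssim \varepsilon^{\alpha-1}\|u\|_{B^\alpha_{4,\infty}}$ yields $|\mathcal{R}^H_\varepsilon(t)|\lesssim \varepsilon^{3\alpha-1}(\|u(t)\|_{B^\alpha_{4,\infty}}+\|v(t)\|_{B^\alpha_{4,\infty}})^3$, already harmless for $\alpha>1/3$.

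The crux is $\mathcal{R}^V_\varepsilon$, where only $w\in L^2$ is available, and this is what forces the $1/2$ threshold. Since no Besov regularity of $w$ can be invoked, the estimate must spend the $\varepsilon^\alpha$-gain entirely on the $u$-factor. Minkowski gives
\begin{equation*}
\Bigl\|\int \eta_\varepsilon(y)\,\delta_y u\,\delta_y w\,\mathrm{d}y\Bigr\|_{L^{4/3}} \leq \int \eta_\varepsilon(y)\,\|\delta_y u\|_{L^4}\|\delta_y w\|_{L^2}\,\mathrm{d}y \lesssim \varepsilon^\alpha\|u\|_{B^\alpha_{4,\infty}}\|w\|_{L^2},
\end{equation*}
and the complementary piece obeys $\|(u-u_\varepsilon)(w-w_\varepsilon)\|_{L^{4/3}}\leq \|u-u_\varepsilon\|_{L^4}\|w-w_\varepsilon\|_{L^2}\lesssim \varepsilon^\alpha\|u\|_{B^\alpha_{4,\infty}}\|w\|_{L^2}$. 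Pairing $\|r_\varepsilon(u,w)\|_{L^{4/3}}\lesssim \varepsilon^\alpha\|u\|_{B^\alpha_{4,\infty}}\|w\|_{L^2}$ with $\|\partial_z u_\varepsilon\|_{L^4}\lesssim \varepsilon^{\alpha-1}\|u\|_{B^\alpha_{4,\infty}}$ via H\"older ($L^4\times L^{4/3}\subset L^1$) produces
\begin{equation*}
|\mathcal{R}^V_\varepsilon(t)| \lesssim \varepsilon^{2\alpha-1}\bigl(\|u(t)\|^2_{B^\alpha_{4,\infty}}+\|v(t)\|^2_{B^\alpha_{4,\infty}}\bigr)\|w(t)\|_{L^2},
\end{equation*}
which vanishes as $\varepsilon\to 0$ precisely when $\alpha>1/2$.

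Finally, H\"older in time ($L^4_t\cdot L^4_t\cdot L^2_t\subset L^1_t$) gives $\int_{t_1}^{t_2}|\mathcal{R}^V_\varepsilon|\,\mathrm{d}t\lesssim \varepsilon^{2\alpha-1}\|(u,v)\|^2_{L^4_t B^\alpha_{4,\infty}}\|w\|_{L^2_t L^2_x}$ and $\int_{t_1}^{t_2}|\mathcal{R}^H_\varepsilon|\,\mathrm{d}t\lesssim \varepsilon^{3\alpha-1}\|(u,v)\|^3_{L^4_t B^\alpha_{4,\infty}}$, both of which tend to zero. Passing to the limit $\varepsilon\to 0$ in the integrated identity
\begin{equation*}
\tfrac12\bigl(\|u_\varepsilon(t_2)\|_{L^2}^2+\|v_\varepsilon(t_2)\|_{L^2}^2-\|u_\varepsilon(t_1)\|_{L^2}^2-\|v_\varepsilon(t_1)\|_{L^2}^2\bigr) = \int_{t_1}^{t_2}\bigl(\mathcal{R}^H_\varepsilon+\mathcal{R}^V_\varepsilon\bigr)\,\mathrm{d}t,
\end{equation*}
and using $u_\varepsilon(t)\to u(t)$, $v_\varepsilon(t)\to v(t)$ in $L^2(\mathbb{T}^3)$ for a.e.\ $t$, delivers the stated conservation identity for a.e.\ $t_1,t_2\in(0,T)$. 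The decisive step, and the single place where the hydrostatic exponent $1/2$ appears rather than the classical Onsager $1/3$, is the asymmetric $L^4\times L^2$ splitting of the vertical commutator; every other ingredient is of standard Onsager type.
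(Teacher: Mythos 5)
Your proof is correct, but it follows a genuinely different route from the paper's. The paper proves this statement via the Duchon--Robert strategy: it first establishes an equation of \emph{local} energy balance (Theorem \ref{energyequationtheorem}) containing an intrinsic, mollifier-independent defect term $D(\mathbf{u})$, then gives an abstract vanishing criterion for that defect term (Proposition \ref{zerodefectlemma}) requiring $\int_{\mathbb{T}^3}\lvert\delta\mathbf{u}\rvert(\lvert\delta u\rvert^2+\lvert\delta v\rvert^2)\,dx\leq C(t)\lvert\xi\rvert\sigma(\lvert\xi\rvert)$, and finally verifies this criterion under the $B^\alpha_{4,\infty}$ assumption by the $L^2\times L^4\times L^4$ H\"older splitting (Proposition \ref{exponentlemma}). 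You instead run the Constantin--E--Titi commutator argument directly on the global energy, splitting $(fg)_\varepsilon=f_\varepsilon g_\varepsilon+r_\varepsilon(f,g)$ and estimating the horizontal commutators at rate $\varepsilon^{3\alpha-1}$ and the vertical one at rate $\varepsilon^{2\alpha-1}$ via the asymmetric $L^4\times L^2$ pairing of $\delta u$ against $\delta w$. The paper explicitly acknowledges this alternative in Appendix \ref{commutatorappendix}, where it carries out exactly this style of argument for Proposition \ref{directionlemma}; your estimates are the analogue for Proposition \ref{exponentlemma} and they are sound, including the time integrability $L^4_t\cdot L^4_t\cdot L^2_t\subset L^1_t$. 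What the paper's route buys is reusability: the local balance law and the single criterion of Proposition \ref{zerodefectlemma} are then invoked verbatim for the log-H\"older, anisotropic, Sobolev, and type II/III variants, and the distributional identity \eqref{equationofenergy} is itself a result of independent interest. What your route buys is directness and transparency: the origin of the exponent $\frac12$ (all the Besov gain must be spent on the two horizontal factors because $w$ contributes only boundedness in $L^2$) is visible in a single displayed estimate. The only point you gloss over, which the paper treats carefully with the test function $\psi(t)=\int_0^t(\phi_\epsilon(t'-t_1)-\phi_\epsilon(t'-t_2))\,dt'$ and the Lebesgue differentiation theorem, is the justification for evaluating the integrated identity at almost every pair $t_1,t_2$ and passing to the limit there; this is standard and does not constitute a gap.
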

This theorem suggests that the analogue of the Onsager exponent (i.e. the threshold for energy conservation) for these equations is $\frac{1}{2}$. Throughout the paper the term `Onsager exponent' will denote the regularity threshold for a weak solution to conserve energy. Depending on the context, this can either refer to a Besov, H\"older or Sobolev exponent.

This result is in contrast to the Onsager exponent for the incompressible Euler equations which is $\frac{1}{3}$. See section \ref{conclusion} for a discussion of this increase of the Onsager exponent from $\frac{1}{3}$ to $\frac{1}{2}$.

The proof of this result will use techniques from \cite{duchon}. In particular we will establish an equation that describes the time evolution of the conserved quantity (referred to as the equation of local energy balance) which contains a `defect term' (which can be interpreted as an energy flux at the vanishing limit of small scales or dissipation term). This defect term captures the roughness in the weak solution which causes the anomalous dissipation of energy and will be zero if the underlying weak solution is sufficiently regular.

We observe that there are two ways to approach the issue of the decrease in regularity of $w$, as well as the nonlocality and anisotropic regularity imposed by equation \eqref{equationw}.
\begin{enumerate}
    \item Either one starts by imposing a regularity assumption on $w$ and then proceeds to derive a sufficient condition for energy conservation of weak solutions in terms of the horizontal velocities.
    \item Or one imposes sufficient regularity assumptions on the horizontal velocities such that both the vertical velocity has sufficient regularity for a weak solution to be well-defined, and the horizontal velocities are sufficiently regular for the energy to be conserved.
\end{enumerate}
We will consider both these approaches in this paper. The first approach will be considered in sections \ref{equationofenergysection}-\ref{veryweakappendix}, while the second approach will be investigated in sections \ref{sobolevappendix} and \ref{removalappendix}.

Throughout this paper, we will consider three types of weak solutions to the hydrostatic Euler equations. We will refer to these notions as type I, II or III weak solutions. For later reference, we already mention roughly what the differences between these notions are:
\begin{itemize}
    \item A type I weak solution (see Definition \ref{weaksolutiondefinition}) is the `canonical' weak solution to the hydrostatic Euler equations. For such a weak solution it is assumed that $w \in L^2 (\mathbb{T}^3 \times (0,T))$ and $u, v \in L^\infty ((0,T); L^2 (\mathbb{T}^3))$.
    \item For a type II weak solution (see Definition \ref{typeIIdefinition}) it is assumed that $w \in L^2 ( (0,T); L^2 (\mathbb{T}; \linebreak B^{-s}_{2,\infty} (\mathbb{T}^2)))$ and $u,v \in L^\infty ((0,T); L^2 (\mathbb{T}^3)) \cap L^2 ((0,T); L^2 (\mathbb{T}; B^{\sigma'}_{2,\infty} (\mathbb{T}^2)))$ for $\sigma' > s$ (here $L^2 (\mathbb{T}; B^{\sigma'}_{2,\infty} (\mathbb{T}^2))$ means $L^2 (\mathbb{T})$ regularity in the vertical direction and $B^{\sigma'}_{2,\infty} (\mathbb{T}^2)$ regularity in the horizontal directions), where $0 < s \leq \frac{1}{2}$.
    \item For a type III weak solution (see Definition \ref{veryweakdef}) we assume that $w \in L^2 ((0,T); B^{-s}_{2,\infty} (\mathbb{T}^3))$ and $u, v \in L^\infty ((0,T); L^2 (\mathbb{T}^3)) \cap L^2 ((0,T); B^{\sigma'}_{2,\infty} (\mathbb{T}^3))$ for $0 < s \leq \frac{1}{2}$ and $s < \sigma'$.
\end{itemize}
Note that a difficulty in defining weak solutions where $w$ is just a functional (i.e. type II and III weak solutions) is to make sense of the products $uw$ and $vw$ as distributions. This is done by using Bony's paradifferential calculus. Furthermore, let us point out that boundary condition \eqref{nonormalbc} is imposed by using trace theorems. In the case of type II and type III weak solutions the boundary condition will hold as a functional in a negative Besov space. We refer to Remark \ref{turbulenceremark} below for a comment on the relation of these notions of weak solution to turbulence.

We now give an outline of the paper. In section \ref{equationofenergysection} we establish the equation of local energy balance for weak solutions of type I. We then introduce a criterion that ensures that the defect term, in the local energy balance, is zero. In section \ref{regularitysection} we investigate the adequate regularity assumption necessary for the defect term to be zero, which then implies that the underlying weak solution conserves energy. In other words, the goal is to interpret the criterion derived in section \ref{equationofenergysection}. Such a sufficient condition for type I weak solutions (in terms of Besov spaces) is stated in the theorem above, which will be proven in section \ref{regularitysection}.

Inspired by the quasilinear Lipschitz condition used in proving global existence and uniqueness for the 2D Euler equations (see \cite{majda,marchioro,yudovich} for further details), we will subsequently consider a logarithmic H\"older space. We will show that functions of this kind also yield a zero defect term and solutions with such regularity conserve energy. The idea of this example is to show that the additional decay requirement of the mollified defect term (on top of the Lipschitz decay) to ensure conservation of energy is only very weak. Such a result was already proven for the case of the Euler equations by \cite{cheskidovfriedlander}.

We will also show that in order to ensure energy conservation $u$ and $v$ can have a lower H\"older exponent if $w$ is H\"older continuous. Roughly speaking, if we assume that $w (\cdot, t) \in C^{0, \beta} (\mathbb{T}^3)$ then if $u(\cdot, t), v(\cdot, t) \in C^{0, \alpha} (\mathbb{T}^3)$ with $\alpha \geq \beta > 0$ and $\alpha > \frac{1}{2} - \frac{1}{2} \beta$ there is conservation of energy. In particular, if $u,v$ and $w $ are all H\"older continuous with exponent bigger than $\frac{1}{3}$ the energy is conserved (i.e. the `original' Onsager conjecture is also true, which is consistent with the general result for conservation laws in \cite{bardos2019}).

We continue section \ref{regularitysection} by proving that it is possible to have different `horizontal' and `vertical' Onsager exponents. This is caused by the fact that the hydrostatic Euler equations are anisotropic in terms of regularity of the velocity field. We will show that if $u$ and $v$ are H\"older continuous with exponent $\alpha$ in the vertical and $\beta$ in the horizontal directions (where $2 \alpha + \beta > 2$, $\alpha > \frac{1}{3}$ and $\beta > \frac{2}{3}$) then the energy is conserved.

In section \ref{veryweakappendix} we will consider type II and type III weak solutions, namely when $w$ is allowed to be functional that belongs to a negative Besov space. We follow the same procedure as we did for type I weak solutions and prove an analogue of Onsager's conjecture for types II and III weak solutions. We again establish an equation of local energy balance with a defect term and find sufficient criteria for the vanishing of the defect term.

For type III weak solutions we show that if $w (\cdot, t) \in B^{-s}_{3,\infty} (\mathbb{T}^3)$ for $0 < s < \frac{1}{3}$ then the the condition $(u,v) \in L^3 ((0,T); B^{1/2+s/2+}_{3,\infty} (\mathbb{T}^3))$ is sufficient for the conservation of energy.
We also show that the Onsager exponent cannot be higher than $\frac{2}{3}$, for any notion of weak solution. Indeed, if $(u,v) \in L^3 ((0,T); B^{2/3+}_{3,\infty} (\mathbb{T}^3))$ then a type III weak solution of the hydrostatic Euler equations conserves energy, regardless of any regularity assumption on the vertical velocity $w$. This is because the regularity assumption on $u$ and $v$ ensures via equation \eqref{equationw} that $w$ has enough regularity to make sense of the equation and also for energy to be conserved.

The arguments for type II weak solutions are the same as for type III so therefore we only state the results without proof.

Finally, in sections \ref{sobolevappendix} and \ref{removalappendix} we formulate sufficient criteria for energy conservation solely in terms of regularity assumptions on the horizontal velocities $u$ and $v$. In section \ref{sobolevappendix} we formulate criteria in terms of Sobolev spaces, while in section \ref{removalappendix} we formulate a criterion in terms of Besov spaces.

In section \ref{conclusion} we conclude and give a plausible explanation for the increase in the Onsager exponent from $\frac{1}{3}$ for the Euler equations to $\frac{1}{2}$ for the hydrostatic Euler equations with more details. In the conclusion we also provide an overview of the different regularity criteria that ensure conservation of energy. In particular, there seems to be a `family' of Onsager conjectures for the hydrostatic Euler equations, which is due to the aforementioned anisotropic regularity of the velocity field.

In appendix \ref{besovestimatesappendix} we state a few standard estimates for Besov norms. Appendix \ref{parareview} gives an overview of the techniques from Bony's paradifferential calculus that are used in section \ref{veryweakappendix}. Finally, we note that many results in this paper could also have been proven by using the commutator estimates approach as presented in \cite{titi2018,constantin}. In appendix \ref{commutatorappendix} we give an example of a proof done by using commutator estimates, namely the proof of Proposition \ref{directionlemma}.

\section{The equation of local energy balance} \label{equationofenergysection}
In this section, we establish the equation of local energy balance. We first define a notion of weak solutions (type I) to the hydrostatic Euler equations as in \cite{chiodaroli}.
\begin{definition} \label{weaksolutiondefinition}
A type I weak solution of the hydrostatic Euler equations on the domain $\mathbb{T}^3 \times (0,T)$ is given by a velocity field $\textbf{u} = (u,v,w) : \mathbb{T}^3 \times (0,T) \rightarrow \mathbb{R}^3$ and a pressure $p : \mathbb{T}^3 \times (0,T) \rightarrow \mathbb{R}$ such that:
\begin{itemize}
    \item $\textbf{u} \in L^2 ( \mathbb{T}^3 \times (0,T))$, $u, v \in L^\infty ((0,T); L^2 (\mathbb{T}^3))$ and $p \in L^\infty ( (0,T); L^1 (\mathbb{T}^3) )$.
    \item For all $\phi_1, \phi_2 \in \mathcal{D} ( \mathbb{T}^3 \times (0,T); \mathbb{R})$ the following equations hold
    \begingroup
    \allowdisplaybreaks
    \begin{align}
    &\int_0^T \int_{\mathbb{T}^3} u \partial_t \phi_1 d x \; dt + \int_0^T \int_{\mathbb{T}^3} u \textbf{u} \cdot \nabla \phi_1 d x \; dt + \int_0^T \int_{\mathbb{T}^3} \Omega v \phi_1 d x \; dt \label{weaku}  \\
    & + \int_0^T \int_{\mathbb{T}^3} p \partial_x \phi_1 d x \; dt = 0, \nonumber \\
    &\int_0^T \int_{\mathbb{T}^3} v \partial_t \phi_2 d x \; dt + \int_0^T \int_{\mathbb{T}^3} v \textbf{u} \cdot \nabla \phi_2 d x \; dt - \int_0^T \int_{\mathbb{T}^3} \Omega u \phi_2 d x \; dt \label{weakv} \\
    &+ \int_0^T \int_{\mathbb{T}^3} p \partial_y \phi_2 d x \; dt = 0. \nonumber
    \end{align}
    \endgroup
    \item For all $\phi_3 \in \mathcal{D} (\mathbb{T}^3 \times (0,T) ; \mathbb{R})$, the following equation holds for the pressure
    \begin{equation} \label{pressureweak}
    \int_0^T \int_{\mathbb{T}^3} p \partial_z \phi_3 dx dt = 0.
    \end{equation}
    \item For all $\phi_4 \in \mathcal{D} (\mathbb{T}^3 \times (0,T) ; \mathbb{R})$ the incompressibility condition holds, i.e.
    \begin{equation} \label{incompressibleweak}
    \int_0^T \int_{\mathbb{T}^3} \textbf{u} \cdot \nabla \phi_4 d x \; dt = 0.
    \end{equation}
    \item It holds that $w(x,y,0,t) = w(x,y,1,t) = 0$ (we are considering the unit torus) for almost all $(x,y) \in \mathbb{T}^2$ and $t \in (0,T)$. This ensures that the symmetry conditions are obeyed, namely that $\mathbf{u}_H$ and $p$ are even in $z$ and that $w$ is odd in $z$. We will explain in Remark \ref{tracetypeI} why this requirement makes sense.
\end{itemize}
\end{definition}
\begin{remark} \label{tracetypeI}
If one wants to define the notion of a type I weak solution to the boundary-value problem for the hydrostatic Euler equations in the three-dimensional channel $M$, one has to ensure that the no-normal flow boundary conditions \eqref{nonormalbc} in the vertical direction are obeyed (i.e. $w(x,y,0,t) = w(x,y,L,t) = 0$). Equivalently, if the physical problem is posed on the unit torus $\mathbb{T}^3$ one has to ensure that the symmetry conditions are obeyed, which again means that the boundary conditions \eqref{nonormalbc} have to be imposed.

For type I weak solutions we know that $\mathbf{u} (\cdot, t) \in L^2 (M)$ as well as that $\nabla \cdot \mathbf{u} (\cdot, t) = 0 \in L^2 (M)$. Then by the generalised trace theorem (see \cite{tartar}) we know that $\big(\mathbf{u} \cdot \mathbf{n} (\cdot, t) \big)\lvert_{\partial M} \in H^{-1/2} (\partial M)$. In particular, on the top and bottom of the channel $\mathbf{u} \cdot \mathbf{n} = \pm w$, so the condition $w \lvert_{\partial M} = 0$ (recall that the channel has no boundary in the horizontal directions) makes sense as an equation in $H^{-1/2} (\partial M)$.

By the same reasoning, we can impose the condition $w(x,y,0,t) = 0$ on the torus. Once again, this condition makes sense as a trace. Then a weak solution of hydrostatic Euler on the torus obeying this condition can be viewed as a solution of the physical boundary-value problem on the channel because one can do an even extension in the horizontal directions and an even/odd extension in the vertical direction. For this reason, we will only consider the problem on the torus here. But the results can be proven by the same methods in the case of the channel.
\end{remark}
We will make one remark about the pressure for later reference. One can write down the following equation for the pressure
\begin{equation} \label{pressureeq}
\Delta_H p = -(\nabla_H \otimes \nabla_H) : \int_{\mathbb{T}} \bigg( \textbf{u}_H \otimes \textbf{u}_H \bigg) dz,
\end{equation}
where we recall that $\textbf{u}_H = (u,v)$ is a two-dimensional vector and $\Delta_H$ is the Laplacian in $x$ and $y$. The solution is uniquely determined by the requirement that $\int_{\mathbb{T}^2} p dx dy = 0$.

We will now show that we can weaken the regularity requirements on the test functions $\phi_1$ and $\phi_2$ in the definition of a type I weak solution (Definition \ref{weaksolutiondefinition}). We will need this result later to prove an equation of local energy balance.
\begin{lemma} \label{gentestfunctions}
Equations \eqref{weaku} and \eqref{weakv} still hold for test functions $\phi_1, \phi_2 \in W^{1,1}_0 ((0,T); L^2 (\mathbb{T}^3)) \linebreak \cap L^2 ((0,T); H^3 (\mathbb{T}^3))$.
\end{lemma}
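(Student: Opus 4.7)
The plan is a standard density/continuity argument. I would show two things: first, that each of the five terms appearing in \eqref{weaku} and \eqref{weakv} is continuous (bilinear in $\phi_i$ and the solution data) with respect to the norm of the space
\[
X := W^{1,1}_0\big((0,T); L^2(\mathbb{T}^3)\big) \cap L^2\big((0,T); H^3(\mathbb{T}^3)\big);
\]
second, that $\mathcal{D}(\mathbb{T}^3 \times (0,T); \mathbb{R})$ is dense in $X$. The conclusion then follows by applying \eqref{weaku}--\eqref{weakv} to an approximating sequence $\phi_{i,n} \in \mathcal{D}$ and passing to the limit.

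For the continuity step, the crucial input is the Sobolev embedding $H^3(\mathbb{T}^3) \hookrightarrow W^{1,\infty}(\mathbb{T}^3)$, which holds since $3 > 1 + \tfrac{3}{2}$. This immediately gives $\nabla \phi_i \in L^2\big((0,T); L^\infty(\mathbb{T}^3)\big)$. I would then bound the five terms in turn: the time-derivative term $\int u\,\partial_t \phi_1$ via H\"older with $u \in L^\infty_t L^2_x$ and $\partial_t \phi_1 \in L^1_t L^2_x$; the transport term $\int u\,\mathbf{u}\cdot\nabla \phi_1$ by first estimating $u\mathbf{u} \in L^2_t L^1_x$ (using Cauchy--Schwarz in $x$ together with $u \in L^\infty_t L^2_x$ and $\mathbf{u} \in L^2_t L^2_x$) and pairing against $\nabla \phi_1 \in L^2_t L^\infty_x$; the Coriolis term with $v \in L^\infty_t L^2_x$ against $\phi_1 \in L^2_t L^2_x$; and the pressure term $\int p\,\partial_x \phi_1$ via $p \in L^\infty_t L^1_x$ and $\partial_x \phi_1 \in L^2_t L^\infty_x$. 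Each estimate is linear in $\|\phi_i\|_X$, so every term is continuous on $X$.

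For the density step, I would argue by standard approximation. Given $\phi_i \in X$, extend $\phi_i$ by zero to all of $\mathbb{R}$ in the time variable; this extension lies in $W^{1,1}(\mathbb{R}; L^2(\mathbb{T}^3))$ precisely because the $W^{1,1}_0$ condition encodes vanishing traces at $t=0$ and $t=T$. Next, mollify in time with a symmetric mollifier $\rho_\epsilon$ and in space with a periodic mollifier on $\mathbb{T}^3$; this produces $C^\infty$ functions that converge to $\phi_i$ both in $W^{1,1}_t L^2_x$ and in $L^2_t H^3_x$ by the standard convergence theorems for mollification in Bochner spaces. Finally, multiply by a smooth time cutoff $\chi_\delta$ supported in $(\delta, T-\delta)$; the truncation error is controlled by $\int_{(0,\delta)\cup(T-\delta,T)} \|\phi_i\|_{L^2_x}\,dt$ and analogous tails in the other norms, both of which vanish as $\delta \to 0$ by absolute continuity of the Lebesgue integral. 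A diagonal choice of parameters yields $\phi_{i,n} \in \mathcal{D}(\mathbb{T}^3\times(0,T))$ with $\phi_{i,n} \to \phi_i$ in $X$.

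The main obstacle, and the only place where some care is genuinely needed, is the simultaneous compatibility of the time truncation with the $W^{1,1}_0$ condition: one must verify that $\chi_\delta\,\phi_i \to \phi_i$ in $W^{1,1}_t L^2_x$, for which the key point is that $\partial_t(\chi_\delta \phi_i) - \chi_\delta \partial_t \phi_i = \chi_\delta' \phi_i$ is supported where $\phi_i$ is small in $L^2_x$ (by the vanishing trace and the fundamental theorem of calculus in $W^{1,1}$), so its $L^1_t L^2_x$ norm can be made arbitrarily small. Once this is in place the remaining steps are routine, and plugging $\phi_{i,n}$ into \eqref{weaku}--\eqref{weakv} and sending $n\to\infty$ gives the lemma.
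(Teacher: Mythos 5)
Your proposal is correct and follows essentially the same route as the paper: approximate $\phi_i$ by a sequence in $\mathcal{D}(\mathbb{T}^3\times(0,T))$ and pass to the limit term by term, using $H^3(\mathbb{T}^3)\hookrightarrow W^{1,\infty}(\mathbb{T}^3)$ and H\"older exactly as you describe. The only difference is that you spell out the density of $\mathcal{D}$ in the intersection space (extension, mollification, time cutoff), which the paper simply asserts.
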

\begin{proof}
Let $\varphi \in W^{1,1}_0 ((0,T); L^2 (\mathbb{T}^3)) \cap L^2 ((0,T); H^3 (\mathbb{T}^3))$ be arbitrary. Then there exists a sequence $\{ \varphi_n \}_{n=1}^\infty \subset \mathcal{D} (\mathbb{T}^3 \times (0,T))$ such that $\varphi_n \rightarrow \varphi$ in $W^{1,1}_0 ((0,T); L^2 (\mathbb{T}^3)) \cap L^2 ((0,T); H^3 (\mathbb{T}^3))$.

Now note that equations \eqref{weaku} and \eqref{weakv} hold for any $\varphi_n$, since they lie in $\mathcal{D} (\mathbb{T}^3 \times (0,T))$. We observe that $u \partial_t \varphi_n \rightarrow u \partial_t \varphi$ in $L^1 ((0,T); L^1 (\mathbb{T}^3))$ as $n \rightarrow \infty$ and therefore
\begin{equation*}
\int_0^T \int_{\mathbb{T}^3} u \partial_t \varphi_n d x \; dt \xrightarrow[]{n \rightarrow \infty } \int_0^T \int_{\mathbb{T}^3} u \partial_t \varphi d x \; dt.
\end{equation*}
Similarly, one can see that $u \textbf{u} \cdot \nabla \varphi_n \rightarrow u \textbf{u} \cdot \nabla \varphi$ in $L^1 ( (0,T); L^1 (\mathbb{T}^3))$ as $n \rightarrow \infty$, which means that
\begin{equation*}
\int_0^T \int_{\mathbb{T}^3} u \textbf{u} \cdot \nabla \varphi_n d x \; dt \xrightarrow[]{n \rightarrow \infty } \int_0^T \int_{\mathbb{T}^3} u \textbf{u} \cdot \nabla \varphi d x \; dt \; dt.
\end{equation*}
Recall that we have made a separate regularity assumption on the pressure ($p \in L^\infty ((0,T); \linebreak L^1 (\mathbb{T}^3))$). We observe that $p \partial_x \varphi_n \rightarrow p \partial_x \varphi$ in $L^2 ((0,T); L^1 (\mathbb{T}^3))$ as $n \rightarrow \infty$. Thus it holds that
\begin{equation*}
\int_0^T \int_{\mathbb{T}^3} p \partial_x \varphi_n d x \; dt \xrightarrow[]{n \rightarrow \infty } \int_0^T \int_{\mathbb{T}^3} p \partial_x \varphi d x \; dt \; dt.
\end{equation*}
The convergence of the other terms works exactly the same way and hence we conclude the proof.
\end{proof}
We will also fix some notation. Let $\varphi \in C_c^\infty (\mathbb{R}^3 ; \mathbb{R})$ be a radial standard $C^\infty_c$ mollifier with the property that $\int_{\mathbb{R}^3} \varphi dx = 1$. We define
\begin{equation*}
\varphi_\epsilon (x) \coloneqq \frac{1}{\epsilon^3} \varphi \bigg( \frac{x}{\epsilon} \bigg).
\end{equation*}
Moreover, we introduce the notation
\begin{equation*}
u^\epsilon \coloneqq u * \varphi_\epsilon.
\end{equation*}
Throughout the paper we will be using the Einstein summation convention.

We also recall the definition of Besov spaces. Note that there are several equivalent definitions of Besov spaces, see, e.g., \cite{leoni,bahouri} for more details.
\begin{definition} \label{besovdef}
For $1 \leq p, q \leq \infty$ and $s > 0$, we say that a measurable function $u: \mathbb{T}^n \rightarrow \mathbb{R}$ belongs to the Besov space $B^{s}_{p,q} (\mathbb{T}^n)$ if
\begin{equation*}
\lVert u \rVert_{B^{s}_{p,q} } \coloneqq \lVert u \rVert_{L^p } + \lvert u \rvert_{B^s_{p,q} } < \infty.
\end{equation*}
Here we have defined the Besov seminorm to be (for $1 \leq q < \infty$)
\begin{equation*}
\lvert u \rvert_{B^s_{p,q} } \coloneqq \bigg( \int_{\mathbb{R}^n} \lVert \Delta_h^{\lfloor s \rfloor + 1} u \rVert_{L^p }^q \frac{d h}{\lvert h \rvert^{n + s q}} \bigg)^{1/q}.
\end{equation*}
For the case $q = \infty$, we define the alternative seminorm
\begin{equation} \label{besovseminorm2}
\lvert u \rvert_{B^s_{p, \infty} } \coloneqq \sup_{h \in \mathbb{R}^n \backslash \{ 0 \} } \frac{1}{\lvert h \rvert^s} \lVert \Delta_h^{\lfloor s \rfloor + 1} u \rVert_{L^p }.
\end{equation}
Note that $\lfloor \cdot \rfloor$ has been used here to denote the integer part. We have also introduced the difference quotients through the following two equations
\begin{align*}
\Delta^1_h u(x) &\coloneqq u(x + h) - u(x), \\
\Delta^m_h u(x) &\coloneqq \Delta_h (\Delta^{m-1}_h u(x)), \quad m \geq 2,
\end{align*}
for $x \in \mathbb{T}^n$ and $h \in \mathbb{R}^n$.
\end{definition}
The equation of local energy balance will be stated in the following theorem.
\begin{theorem} \label{energyequationtheorem}
Let $\mathbf{u}$ be a type I weak solution of the hydrostatic Euler equations such that
\begin{equation} \label{L4assumption}
u, v \in L^4 ((0,T); L^4 (\mathbb{T}^3)).
\end{equation}
Then the following equation of local energy balance (which holds in the sense of distributions with the space of test functions $\mathcal{D} ( \mathbb{T}^3 \times (0,T))$) is satisfied
\begin{equation} \label{equationofenergy}
\partial_t (u^2 + v^2) + \nabla \cdot \bigg[ (u^2 + v^2) \mathbf{u} \bigg] + 2 \nabla \cdot (p \mathbf{u}) + \frac{1}{2} D (\mathbf{u}) = 0.
\end{equation}
In the above we have introduced the defect term
\begin{equation} \label{defectdefinition}
D (\mathbf{u}) (x,t) \coloneqq \lim_{\epsilon \rightarrow 0} \int_{\mathbb{R}^3} \bigg[ \nabla \varphi_\epsilon (\xi) \cdot \delta \mathbf{u} (\xi ; x, t) (\lvert\delta u (\xi ; x, t) \rvert^2 + \lvert\delta v (\xi ; x, t) \rvert^2 ) \bigg] d \xi,
\end{equation}
where
\begin{equation*}
\delta u (\xi ; x, t) \coloneqq u(x + \xi,t) - u (x,t).
\end{equation*}
The limit in equation \eqref{defectdefinition} is independent of the choice of mollifier $\varphi_\epsilon$ and the convergence is in the space $W^{-1,1} ((0,T); W^{-1,1} (\mathbb{T}^3))$.
\end{theorem}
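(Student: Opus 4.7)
The plan is to derive \eqref{equationofenergy} by mollifying the weak momentum equations, forming the energy identity at the $\epsilon$-level, and identifying the Duchon--Robert defect in the limit. First, using Lemma \ref{gentestfunctions} with test functions of the form $\psi(\cdot,t)\,\varphi_\epsilon(x-\cdot)$ for $\psi\in\mathcal{D}(\mathbb{T}^3\times(0,T))$, which are admissible since $\varphi_\epsilon$ is smooth in the spatial variable, I obtain that the mollified unknowns satisfy, pointwise in $x$ and a.e.\ in $t$,
\begin{align*}
&\partial_t u^\epsilon + \nabla\cdot(u\mathbf{u})^\epsilon - \Omega v^\epsilon + \partial_x p^\epsilon = 0, \\
&\partial_t v^\epsilon + \nabla\cdot(v\mathbf{u})^\epsilon + \Omega u^\epsilon + \partial_y p^\epsilon = 0,
\end{align*}
together with the mollified constraints $\partial_z p^\epsilon = 0$ and $\nabla\cdot\mathbf{u}^\epsilon = 0$.

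Next, I would multiply the first equation by $u^\epsilon$, the second by $v^\epsilon$, and sum; the Coriolis contributions cancel pointwise. For the pressure, the identity
\begin{equation*}
u^\epsilon \partial_x p^\epsilon + v^\epsilon \partial_y p^\epsilon = \partial_x(u^\epsilon p^\epsilon) + \partial_y(v^\epsilon p^\epsilon) - p^\epsilon(\partial_x u^\epsilon + \partial_y v^\epsilon),
\end{equation*}
combined with $\partial_x u^\epsilon + \partial_y v^\epsilon = -\partial_z w^\epsilon$ and $\partial_z p^\epsilon = 0$ (so that $p^\epsilon\partial_z w^\epsilon = \partial_z(p^\epsilon w^\epsilon)$), collapses the pressure contribution to the perfect divergence $\nabla\cdot(p^\epsilon\mathbf{u}^\epsilon)$. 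For the nonlinearity I would invoke the Duchon--Robert/Constantin--E--Titi algebraic identity
\begin{equation*}
(fg)^\epsilon = f^\epsilon g^\epsilon + r_\epsilon(f,g) - (f-f^\epsilon)(g-g^\epsilon), \qquad r_\epsilon(f,g)(x) = \int \varphi_\epsilon(\xi)\,\delta f(\xi;x)\,\delta g(\xi;x)\, d\xi,
\end{equation*}
applied componentwise to $(u u_j)^\epsilon$ and $(v u_j)^\epsilon$. Using $\nabla\cdot\mathbf{u}^\epsilon=0$ converts the $f^\epsilon g^\epsilon$ part into the transport flux $\nabla\cdot[\tfrac12((u^\epsilon)^2+(v^\epsilon)^2)\mathbf{u}^\epsilon]$. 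Moving the divergence onto $\varphi_\epsilon$ in the $r_\epsilon$ part produces exactly $\tfrac14\int \nabla\varphi_\epsilon(\xi)\cdot\delta\mathbf{u}(\xi;x)(|\delta u|^2+|\delta v|^2)\, d\xi$, while the $(f-f^\epsilon)(g-g^\epsilon)$ pieces yield a divergence of trilinear differences that will vanish in $L^1_{t,x}$ by Hölder together with the strong convergence of mollified fields. Doubling this identity recovers \eqref{equationofenergy} at the $\epsilon$-level.

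Finally, I would pass to the limit $\epsilon\to 0$ in $\mathcal{D}'(\mathbb{T}^3\times(0,T))$. Hypothesis \eqref{L4assumption} combined with $w\in L^2_{t,x}$ gives $u^\epsilon,v^\epsilon\to u,v$ in $L^4_{t,x}$ and $w^\epsilon\to w$ in $L^2_{t,x}$; hence the horizontal cubic fluxes $(u^\epsilon)^2 u^\epsilon$, $(v^\epsilon)^2 v^\epsilon$ converge in $L^{4/3}_{t,x}$ and the mixed vertical fluxes $(u^\epsilon)^2 w^\epsilon$, $(v^\epsilon)^2 w^\epsilon$ in $L^1_{t,x}$. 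To handle $p^\epsilon\mathbf{u}^\epsilon$ I would upgrade the pressure regularity by applying Calderón--Zygmund estimates to \eqref{pressureeq} with the $L^2_{t,x}$ right-hand side $\mathbf{u}_H\otimes\mathbf{u}_H$, obtaining $p\in L^2_{t,x}$ and hence $p^\epsilon\mathbf{u}^\epsilon\to p\mathbf{u}$ in $L^{4/3}_{t,x}$. Since every term in the $\epsilon$-level identity except the commutator thereby admits a distributional limit, rearrangement forces the limit defining $D(\mathbf{u})$ to exist in $W^{-1,1}((0,T);W^{-1,1}(\mathbb{T}^3))$ and, being determined by the other terms, to be independent of the mollifier choice. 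The main obstacle I anticipate is the bookkeeping in the commutator step: producing precisely the symmetric form in \eqref{defectdefinition} requires carefully symmetrising the contributions of $r_\epsilon(u,u_j)$ and $r_\epsilon(u_j,u)$ (and likewise for $v$) and repeatedly using the divergence-free condition to absorb leftover quadratic remainders into exact divergences that may then be grouped into the transport flux.
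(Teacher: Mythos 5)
Your overall scaffolding (mollify, form an $\epsilon$-level energy identity, pass to the limit, read off the defect) matches the paper's, but at the decisive step you switch to the Constantin--E--Titi commutator decomposition, and there the argument has a genuine gap. You multiply the mollified equations by $u^\epsilon$ and $v^\epsilon$, so the flux coming from the nonlinearity is of the form $\partial_j u^\epsilon\,\big[r_\epsilon(u,u_j)-(u-u^\epsilon)(u_j-u_j^\epsilon)\big]$ (summed, and likewise for $v$). This quantity is \emph{not} equal to $\tfrac14\int_{\mathbb{R}^3}\nabla\varphi_\epsilon(\xi)\cdot\delta\mathbf{u}\,(|\delta u|^2+|\delta v|^2)\,d\xi$, and no integration by parts in $\xi$ turns it into that: in $r_\epsilon(u,u_j)(x)=\int\varphi_\epsilon(\xi)\,\delta u\,\delta u_j\,d\xi$ the factor $\partial_j u^\epsilon(x)$ sits outside the $\xi$-integral, so you obtain a product of two separate $\xi$-integrals rather than a single integral of a trilinear increment. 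Since the content of Theorem \ref{energyequationtheorem} is precisely that the \emph{specific} limit \eqref{defectdefinition} exists in $W^{-1,1}((0,T);W^{-1,1}(\mathbb{T}^3))$ and is mollifier-independent, your closing move (``rearrangement forces the limit defining $D(\mathbf{u})$ to exist'') does not close: the quantity whose limit you can extract by rearrangement is the CET flux, not $D_\epsilon(\mathbf{u})$, and identifying the two limits is an additional, unproved step. You flag this as ``bookkeeping,'' but it is the heart of the matter.

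The paper avoids this by an asymmetric pairing: it inserts $\psi u^\epsilon$, $\psi v^\epsilon$ as test functions in the weak formulation (justified by Lemma \ref{gentestfunctions}) \emph{and} multiplies the mollified equations by $\psi u$, $\psi v$, then subtracts. The resulting energy density is the cross term $uu^\epsilon+vv^\epsilon$ rather than $(u^\epsilon)^2+(v^\epsilon)^2$, and the advective combination $u\mathbf{u}\cdot\nabla u^\epsilon-u\nabla\cdot(u\mathbf{u})^\epsilon+v\mathbf{u}\cdot\nabla v^\epsilon-v\nabla\cdot(v\mathbf{u})^\epsilon$ is \emph{exactly} expressible, via the Duchon--Robert algebraic identity
\begin{equation*}
D_\epsilon(\mathbf{u})=-\nabla\cdot\big[(u^2+v^2)\mathbf{u}\big]^\epsilon+\mathbf{u}\cdot\nabla(u^2+v^2)^\epsilon+2u\nabla\cdot(u\mathbf{u})^\epsilon+2v\nabla\cdot(v\mathbf{u})^\epsilon-2u\mathbf{u}\cdot\nabla u^\epsilon-2v\mathbf{u}\cdot\nabla v^\epsilon,
\end{equation*}
as $-\tfrac12\psi D_\epsilon(\mathbf{u})$ plus exact divergences; the limit \eqref{defectdefinition} then exists because every other term in the identity converges in $W^{-1,1}$. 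If you want to keep the CET route (which the paper itself uses in Appendix \ref{commutatorappendix} for Proposition \ref{directionlemma}, where only energy \emph{conservation}, not the local balance with the specific defect \eqref{defectdefinition}, is at stake), you would still need to prove either the displayed identity or that the difference between the CET flux and $D_\epsilon(\mathbf{u})$ vanishes as $\epsilon\to 0$. Your treatment of the Coriolis, pressure, and limit-passage steps is otherwise consistent with the paper's.
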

\begin{proof}
We first mollify equations \eqref{equationu} and \eqref{equationv} in space (with $\varphi_\epsilon$) and obtain that
\begin{align}
0 &= \partial_t u^\epsilon + \nabla \cdot ( u \textbf{u})^\epsilon - \Omega v^\epsilon + \partial_x p^\epsilon, \label{mollifiedu} \\
0 &= \partial_t v^\epsilon + \nabla \cdot (v \textbf{u} )^\epsilon + \Omega u^\epsilon + \partial_y p^\epsilon. \label{mollifiedv}
\end{align}
These equations hold pointwise. We observe that $u^\epsilon, v^\epsilon \in L^\infty ((0,T); C^\infty (\mathbb{T}^3))$. We note that the terms in the system \eqref{mollifiedu}-\eqref{mollifiedv} have the following regularities: $\nabla \cdot ( u \textbf{u})^\epsilon, \nabla \cdot (v \mathbf{u})^\epsilon \in L^2 ((0,T); C^\infty (\mathbb{T}^3))$, $\Omega v^\epsilon, \Omega u^\epsilon \in L^\infty ((0,T); C^\infty (\mathbb{T}^3))$ and $\partial_x p^\epsilon, \partial_y p^\epsilon \in L^\infty ((0,T); C^\infty (\mathbb{T}^3))$ (and similarly for the terms in equation \eqref{mollifiedv}). From this we conclude that
\begin{equation*}
\partial_t u^\epsilon, \partial_t v^\epsilon \in L^2 ((0,T); C^\infty (\mathbb{T}^3)).
\end{equation*}
As a result, we obtain that
\begin{equation*}
u^\epsilon, v^\epsilon \in H^1 ((0,T); C^\infty (\mathbb{T}^3)) \cap L^\infty ((0,T); C^\infty (\mathbb{T}^3)) \subset W^{1,1} ((0,T); L^2 (\mathbb{T}^3)) \cap L^2 ((0,T); H^3 (\mathbb{T}^3)).
\end{equation*}
Therefore, we multiply $u^\epsilon$ and $v^\epsilon$ by a function $\psi \in \mathcal{D} (\mathbb{T}^3 \times (0,T); \mathbb{R})$ and consider $\psi u^\epsilon$ and $\psi v^\epsilon$ as the test functions in our weak formulation for type I weak solutions, which is allowed by Lemma \ref{gentestfunctions}. Adding the equations together gives
\begin{align}
&\int_0^T \int_{\mathbb{T}^3} \bigg[ u \partial_t (u^\epsilon \psi) + v \partial_t (v^\epsilon \psi) + u \textbf{u} \cdot \nabla (u^\epsilon \psi) + v \textbf{u} \cdot \nabla (v^\epsilon \psi) + \Omega v u^\epsilon \psi - \Omega u v^\epsilon \psi \label{integratedmollified} \\
&+ p \partial_x (u^\epsilon \psi) + p \partial_y (v^\epsilon \psi)\bigg] dx dt = 0. \nonumber
\end{align}
Now we multiply equations \eqref{mollifiedu} and \eqref{mollifiedv} by $\psi u$ and $\psi v$ respectively (they therefore hold almost everywhere), integrate them in space and time and subtract them from equation \eqref{integratedmollified}. This yields
\begin{align*}
&\int_0^T \int_{\mathbb{T}^3} \bigg[ u \partial_t (u^\epsilon \psi) - u \psi \partial_t u^\epsilon + v \partial_t (v^\epsilon \psi) - v \psi \partial_t v^\epsilon + u \textbf{u} \cdot \nabla (u^\epsilon \psi) - \psi u \nabla \cdot (u \textbf{u})^\epsilon \\
&+ v \textbf{u} \cdot \nabla (v^\epsilon \psi) - \psi v \nabla \cdot (v \textbf{u})^\epsilon  + p \partial_x (u^\epsilon \psi) - u \psi \partial_x p^\epsilon + p \partial_y (v^\epsilon \psi) - v \psi \partial_y p^\epsilon \bigg] dx dt = 0.
\end{align*}
Now we deal with the different terms in turn, we observe by the Leibniz rule that (since $u^\epsilon, v^\epsilon$ and $\psi$ are weakly differentiable in time)
\begin{align*}
&\int_0^T \int_{\mathbb{T}^3} \bigg[ u \partial_t (u^\epsilon \psi) - u \psi \partial_t u^\epsilon + v \partial_t (v^\epsilon \psi) - v \psi \partial_t v^\epsilon \bigg] dx dt = \int_0^T \int_{\mathbb{T}^3} \bigg[ (u u^\epsilon + v v^\epsilon) \partial_t \psi \bigg] dx dt \\
&= \langle -\partial_t (u u^\epsilon + v v^\epsilon), \psi \rangle,
\end{align*}
where the brackets $\langle \cdot, \cdot \rangle$ denote the distributional action. Note that we have used the fact that $\psi$ is $C^\infty$ to be able to justify the time derivative of the products $u^\epsilon \psi$ and $v^\epsilon \psi$ satisfies the Leibniz rule.

Now we look at the pressure terms, we recall that $\partial_z p^\epsilon = 0$, as well as equation \eqref{pressureweak}. Then we rewrite the pressure terms as follows
\begingroup
\allowdisplaybreaks
\begin{align*}
&\int_0^T \int_{\mathbb{T}^3} \bigg[ p \partial_x (u^\epsilon \psi) - u \psi \partial_x p^\epsilon + p \partial_y (v^\epsilon \psi) - v \psi \partial_y p^\epsilon \bigg] dx dt \\
&= \int_0^T \int_{\mathbb{T}^3} \bigg[ p \partial_x (u^\epsilon \psi) + p \partial_y (v^\epsilon \psi) + p \partial_z (w^\epsilon \psi) - u \psi \partial_x p^\epsilon - v \psi \partial_y p^\epsilon - w \psi \partial_z p^\epsilon \bigg] dx dt \\
&= \int_0^T \int_{\mathbb{T}^3} \bigg[ p \textbf{u}^\epsilon \cdot \nabla \psi + p^\epsilon \textbf{u} \cdot \nabla \psi \bigg] dx dt = \langle - \nabla \cdot ( p \textbf{u}^\epsilon + p^\epsilon \textbf{u}), \psi \rangle,
\end{align*}
\endgroup
where we have used the incompressibility $\nabla \cdot \mathbf{u}^\epsilon = 0$ condition in the above.
Finally, we look at the advective terms. We first introduce a defect term and calculate it to be
\begingroup
\allowdisplaybreaks
\begin{align*}
&D_\epsilon (\textbf{u}) \coloneqq \int_{\mathbb{R}^3} d \xi \bigg[ \nabla \varphi_\epsilon (\xi) \cdot \delta \textbf{u} (\xi ; x, t) (\lvert\delta u (\xi ; x, t) \rvert^2 + \lvert\delta v (\xi ; x, t) \rvert^2) \bigg] = - \nabla \cdot \bigg[ (u^2 + v^2) \textbf{u} \bigg]^\epsilon \\
&+ \textbf{u} \cdot \nabla (u^2 + v^2)^\epsilon + 2 u \nabla \cdot ( u \textbf{u})^\epsilon + 2 v \nabla \cdot (v \textbf{u})^\epsilon - 2 u \textbf{u} \cdot \nabla u^\epsilon - 2 v \textbf{u} \cdot \nabla v^\epsilon - (u^2 + v^2) \nabla \cdot \textbf{u}^\epsilon  \\
&= - \nabla \cdot \bigg[ (u^2 + v^2) \textbf{u} \bigg]^\epsilon +  \textbf{u} \cdot \nabla (u^2 + v^2)^\epsilon + 2 u \nabla \cdot ( u \textbf{u})^\epsilon + 2 v \nabla \cdot (v \textbf{u})^\epsilon - 2 u \textbf{u} \cdot \nabla u^\epsilon - 2 v \textbf{u} \cdot \nabla v^\epsilon.
\end{align*}
\endgroup
Subsequently, we can write the advective terms as follows
\begingroup
\allowdisplaybreaks
\begin{align*}
&\int_0^T \int_{\mathbb{T}^3} \bigg[ u \textbf{u} \cdot \nabla (u^\epsilon \psi) - \psi u \nabla \cdot (u \textbf{u})^\epsilon + v \textbf{u} \cdot \nabla (v^\epsilon \psi) - \psi v \nabla \cdot (v \textbf{u})^\epsilon \bigg] dx dt \\
&= \int_0^T \int_{\mathbb{T}^3} \bigg[ \psi u \textbf{u} \cdot \nabla u^\epsilon - \psi u \nabla \cdot (u \textbf{u})^\epsilon + \psi v \textbf{u} \cdot \nabla v^\epsilon - \psi v \nabla \cdot (v \textbf{u})^\epsilon + (u u^\epsilon + v v^\epsilon) \textbf{u} \cdot \nabla \psi \bigg] dx dt \\
&= \int_0^T \int_{\mathbb{T}^3} \bigg[ - \frac{1}{2} \psi D_\epsilon (\textbf{u}) - \frac{1}{2} \psi \nabla \cdot \bigg[ (u^2 + v^2) \textbf{u} \bigg]^\epsilon + \frac{1}{2} \psi \textbf{u} \cdot \nabla (u^2 + v^2)^\epsilon + (u u^\epsilon + v v^\epsilon) \textbf{u} \cdot \nabla \psi \bigg] dx dt \\
&= \int_0^T \int_{\mathbb{T}^3} \bigg[ - \frac{1}{2} \psi D_\epsilon (\textbf{u}) + \frac{1}{2} \bigg[ (u^2 + v^2) \textbf{u} \bigg]^\epsilon \cdot \nabla \psi - \frac{1}{2} (u^2 + v^2)^\epsilon \textbf{u} \cdot \nabla \psi + (u u^\epsilon + v v^\epsilon) \textbf{u} \cdot \nabla \psi \bigg] dx dt \\
&= \bigg\langle - \frac{1}{2} D_\epsilon (\textbf{u}) - \nabla \cdot ( (u u^\epsilon + v v^\epsilon ) \textbf{u}) + \frac{1}{2} \nabla \cdot \bigg( \big( u^2 + v^2 \big)^\epsilon \textbf{u} - \big( (u^2 + v^2) \textbf{u}\big)^\epsilon \bigg), \psi \bigg\rangle.
\end{align*}
\endgroup
Once again we have used the incompressibility condition here. Then we end up with the following distributional equation (again for all $\psi \in \mathcal{D} (\mathbb{T}^3 \times (0,T))$)
\begin{align}
&\bigg\langle \partial_t (u u^\epsilon + v v^\epsilon) + \nabla \cdot ( p \textbf{u}^\epsilon + p^\epsilon \textbf{u}) + \frac{1}{2} D_\epsilon (\textbf{u}) + \nabla \cdot ( (u u^\epsilon + v v^\epsilon ) \textbf{u}) \label{mollifiedenergyeq} \\
&+ \frac{1}{2} \nabla \cdot \bigg(  \big( (u^2 + v^2) \textbf{u}\big)^\epsilon - \big( u^2 + v^2 \big)^\epsilon \textbf{u} \bigg), \psi \bigg\rangle = 0. \nonumber
\end{align}
Now we consider the convergence of the different terms as $\epsilon \rightarrow 0$. We first observe that since $u, v\in L^\infty ((0,T); L^2 (\mathbb{T}^3))$ it holds that
\begin{equation*}
u u^\epsilon + v v^\epsilon \xrightarrow[]{\epsilon \rightarrow 0} u^2 + v^2 \quad \text{in } L^\infty ((0,T); L^1 (\mathbb{T}^3)).
\end{equation*}
Now we recall the assumption that $u, v \in L^4 ((0,T); L^4 (\mathbb{T}^3))$. By equations \eqref{equationp} and \eqref{pressureeq}, one has $p \in L^2 ((0,T); L^2 (\mathbb{T}^3))$. Therefore,
\begin{equation*}
p \textbf{u}^\epsilon + p^\epsilon \textbf{u} \xrightarrow[]{\epsilon \rightarrow 0} 2 p \textbf{u} \quad \text{in } L^1 ((0,T); L^1 (\mathbb{T}^3)).
\end{equation*}
Now again by assumption \eqref{L4assumption} it holds that
\begin{equation*}
u u^\epsilon + v v^\epsilon \xrightarrow[]{\epsilon \rightarrow 0} u^2 + v^2 \quad \text{in } L^2 ((0,T); L^2 (\mathbb{T}^3)).
\end{equation*}
In particular one has $(u u^\epsilon + v v^\epsilon) \textbf{u} \xrightarrow[]{\epsilon \rightarrow 0} (u^2 + v^2) \textbf{u} $ in $L^1 ((0,T); L^1 (\mathbb{T}^3))$. From this we conclude that
\begin{equation*}
\big( (u^2 + v^2) \textbf{u}\big)^\epsilon - \big( u^2 + v^2 \big)^\epsilon \textbf{u} \xrightarrow[]{\epsilon \rightarrow 0} 0 \quad \text{in } L^1 ((0,T); L^1 (\mathbb{T}^3)).
\end{equation*}
Finally, we need to consider the convergence of the defect term $D_\epsilon$ as $\epsilon \rightarrow 0$.
We first observe that $D_\epsilon (\textbf{u})$ is well-defined as a function in $L^1 ((0,T); L^1 (\mathbb{T}^3))$ for any $\epsilon > 0$ due to the following bound
\begin{equation*}
\lVert D_\epsilon (\textbf{u}) (\cdot, t) \rVert_{L^1 } \leq C_\epsilon \lVert \delta \textbf{u} (\cdot, t) \rVert_{L^2} (\lVert \delta u (\cdot, t) \rVert_{L^{4}}^2 + \lVert \delta v (\cdot, t) \rVert_{L^{4}}^2).
\end{equation*}
In the above $C_\epsilon$ is a constant which depends on $\epsilon > 0$. We already saw that $\delta \textbf{u} (\lvert\delta u\rvert^2 + \lvert\delta v \rvert^2) \in L^1 ((0,T); L^1 (\mathbb{T}^3))$, therefore $D_\epsilon (\textbf{u}) \in L^1 ((0,T); L^1 (\mathbb{T}^3))$ for any $\epsilon > 0$.

Regarding the limit $\epsilon \rightarrow 0$, by virtue of equation \eqref{mollifiedenergyeq} one has
\begin{equation*}
D_\epsilon (\textbf{u}) = -2\partial_t (u u^\epsilon + v v^\epsilon) -2 \nabla \cdot ( p \textbf{u}^\epsilon + p^\epsilon \textbf{u}) - 2 \nabla \cdot ( (u u^\epsilon + v v^\epsilon ) \textbf{u}) - \nabla \cdot \bigg(  \big( (u^2 + v^2) \textbf{u}\big)^\epsilon - \big( u^2 + v^2 \big)^\epsilon \textbf{u} \bigg).
\end{equation*}
In the above we have proven that the right-hand side converges in $W^{-1,1} ((0,T); W^{-1,1} (\mathbb{T}^3))$ and is independent of the choice of mollifier. We call its limit $D (\textbf{u})$.
As a result,
\begin{equation*}
D(\textbf{u}) = -2 \big( \partial_t (u^2 + v^2) + \nabla \cdot ( 2 p \textbf{u}) + \nabla \cdot ( (u^2 + v^2 ) \textbf{u}) \big),
\end{equation*}
which holds in the sense of distributions, that is
\begin{align*}
&\bigg\langle \partial_t (u^2 + v^2) + \nabla \cdot ( 2 p \textbf{u}) + \frac{1}{2} D (\textbf{u}) + \nabla \cdot ( (u^2 + v^2 ) \textbf{u}) , \psi \bigg\rangle = 0,
\end{align*}
for every test function $\psi \in \mathcal{D} ( \mathbb{T}^3 \times (0,T))$.
\end{proof}
It should be mentioned that it is possible to include the viscous terms when deriving the equation of local energy balance, but this is not needed for our purposes here. Now we introduce a criterion which ensures that the defect term, $D (\mathbf{u})$, is zero. Note that the term $D (\textbf{u})$ can be physically interpreted as an energy flux at the limit of vanishing small spatial scales, i.e. as $\epsilon \rightarrow 0$, and hence is the source of dissipation of energy in the case of not smooth enough type I weak solutions.
\begin{proposition} \label{zerodefectlemma}
Let $\mathbf{u}$ be a type I weak solution of the hydrostatic Euler equations and assume that $u,v \in L^4 ((0,T); L^4 (\mathbb{T}^3))$. Let $C \in L^1 (0,T)$ and assume that $\sigma \in L^\infty_{\mathrm{loc}} (\mathbb{R})$ is a nonnegative real-valued function with the property that $\sigma (\lvert \xi \rvert) \rightarrow 0$ as $\lvert \xi \rvert \rightarrow 0$. Suppose $\mathbf{u}$ satisfies the inequality
\begin{equation} \label{zerodefect}
\int_{\mathbb{T}^3} \lvert \delta \mathbf{u} (\xi; x,t)  \rvert \bigg(\lvert \delta u (\xi; x,t) \rvert^2 + \lvert  \delta v (\xi; x,t) \rvert^2 \bigg) dx \leq C(t) \lvert \xi \rvert \sigma ( \lvert \xi \rvert).
\end{equation}
Then $D(\mathbf{u}) = 0$.
\end{proposition}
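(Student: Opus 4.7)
Theorem \ref{energyequationtheorem} already guarantees $D_\epsilon(\mathbf{u}) \to D(\mathbf{u})$ in $W^{-1,1}((0,T); W^{-1,1}(\mathbb{T}^3))$. The plan is to use the quantitative hypothesis \eqref{zerodefect} to strengthen this to convergence of $D_\epsilon(\mathbf{u})$ to zero in the much stronger space $L^1((0,T); L^1(\mathbb{T}^3))$; since the latter embeds continuously in the former, uniqueness of limits then forces $D(\mathbf{u}) = 0$.

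\textbf{Main bound.} Pulling the absolute value inside, applying Fubini--Tonelli (all integrands being nonnegative), and invoking \eqref{zerodefect} at fixed $t$ yield
\begin{equation*}
\int_{\mathbb{T}^3} |D_\epsilon(\mathbf{u})(x,t)|\,dx \leq \int_{\mathbb{R}^3} |\nabla\varphi_\epsilon(\xi)| \int_{\mathbb{T}^3} |\delta\mathbf{u}|(|\delta u|^2+|\delta v|^2)\,dx\,d\xi \leq C(t) \int_{\mathbb{R}^3} |\nabla\varphi_\epsilon(\xi)|\,|\xi|\,\sigma(|\xi|)\,d\xi.
\end{equation*}
The decisive observation is that the weight $|\nabla\varphi_\epsilon(\xi)|\,|\xi|\,d\xi$ is scale-invariant: rescaling $\xi = \epsilon\eta$ cancels the $\epsilon^{-4}$ produced by differentiating the mollifier against the $\epsilon \cdot \epsilon^3$ produced by $|\xi|\,d\xi$, so
\begin{equation*}
\int_{\mathbb{R}^3} |\nabla\varphi_\epsilon(\xi)|\,|\xi|\,\sigma(|\xi|)\,d\xi = \int_{\mathbb{R}^3} |\nabla\varphi(\eta)|\,|\eta|\,\sigma(\epsilon|\eta|)\,d\eta =: M(\epsilon).
\end{equation*}

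\textbf{Conclusion.} Fix $R$ so that $\supp\varphi \subset B_R$. For each $\eta$ we have $\sigma(\epsilon|\eta|) \to 0$ as $\epsilon \to 0$ by hypothesis, while for $\epsilon \leq 1$ the integrand is dominated by $|\nabla\varphi(\eta)|\,|\eta|\,\|\sigma\|_{L^\infty([0,R])}$, which is integrable. Dominated convergence therefore yields $M(\epsilon) \to 0$. Integrating in $t$ against $C \in L^1(0,T)$ gives
\begin{equation*}
\|D_\epsilon(\mathbf{u})\|_{L^1((0,T); L^1(\mathbb{T}^3))} \leq \|C\|_{L^1(0,T)}\, M(\epsilon) \xrightarrow{\epsilon \to 0} 0,
\end{equation*}
and since $L^1((0,T); L^1(\mathbb{T}^3)) \hookrightarrow W^{-1,1}((0,T); W^{-1,1}(\mathbb{T}^3))$ continuously, uniqueness of the limit identifies $D(\mathbf{u})$ with this zero limit.

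\textbf{Main obstacle.} The delicate point is the dominated-convergence step, because the hypothesis on $\sigma$ asks only for pointwise convergence to zero at the origin together with local boundedness — not continuity or monotonicity, so no quantitative rate is available. What rescues this is the compact support of $\nabla\varphi$, which confines the argument $\epsilon|\eta|$ to $[0,\epsilon R]$ and permits the single $\epsilon$-independent $L^\infty$ bound $\|\sigma\|_{L^\infty([0,R])}$ to serve as dominator for all small $\epsilon$.
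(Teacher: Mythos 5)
Your argument is correct and follows essentially the same route as the paper's own proof: bound $D_\epsilon$ pointwise, apply Fubini and the hypothesis \eqref{zerodefect}, rescale $\xi=\epsilon\eta$ to exploit the scale invariance of $|\nabla\varphi_\epsilon(\xi)|\,|\xi|\,d\xi$, and conclude by dominated convergence that $D_\epsilon(\mathbf{u})\to 0$ in $L^1$, hence $D(\mathbf{u})=0$ by uniqueness of limits. Your explicit justification of the dominating function via the compact support of $\nabla\varphi$ and $\sigma\in L^\infty_{\mathrm{loc}}$ is a welcome refinement of a step the paper passes over quickly, but it is not a different method.
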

\begin{proof}
Observe that
\begin{align*}
\lvert D_\epsilon (\textbf{u}) \rvert &= \frac{1}{2} \bigg\lvert \int_{\mathbb{R}^3} \nabla \varphi_\epsilon (\xi) \cdot \delta \textbf{u} (\xi ; x, t) \bigg[ \lvert\delta u  (\xi ; x, t) \rvert^2 + \lvert\delta v (\xi ; x, t) \rvert^2 \bigg] d \xi \bigg\rvert \\
&\leq \int_{\mathbb{R}^3} \lvert \nabla \varphi_\epsilon (\xi) \rvert \lvert \delta \textbf{u} (\xi ; x, t) \rvert \bigg[ \lvert \delta u  (\xi ; x, t) \rvert^2 + \lvert \delta v  (\xi ; x, t) \rvert^2 \bigg] d \xi.
\end{align*}
Integrating this inequality over $\mathbb{T}^3 \times (0,T)$ gives us that
\begin{align*}
\int_0^T \int_{\mathbb{T}^3} \lvert D_\epsilon (\textbf{u}) \rvert dx dt &\leq \int_0^T dt \int_{\mathbb{R}^3} \lvert \nabla \varphi_\epsilon (\xi) \rvert \int_{\mathbb{T}^3} \lvert \delta \textbf{u}  (\xi ; x, t) \rvert (\lvert \delta u (\xi ; x, t) \rvert^2 + \lvert \delta v (\xi ; x, t) \rvert^2 ) dx d \xi \\
&\leq \int_0^T C(t) dt \int_{\mathbb{R}^3} \frac{1}{\epsilon^3} \lvert \nabla_\xi \varphi (\xi / \epsilon) \rvert \lvert \xi \rvert \sigma (\lvert \xi \rvert) d \xi.
\end{align*}
Note that we are allowed to use Fubini's theorem because the integrand is absolutely integrable (as $\textbf{u} (\cdot, t) \in L^2 (\mathbb{T}^3)$ and $u (\cdot, t), v (\cdot, t) \in L^4 (\mathbb{T}^3)$).
Now by using that $\int_0^T C(t) dt < \infty$ and the change of variable $\xi = \epsilon z$, we find that
\begin{equation*}
\int_0^T \int_{\mathbb{T}^3} \lvert D_\epsilon (\textbf{u}) \rvert dx dt \lesssim  \int_{\mathbb{T}^3} \lvert \nabla_\xi \varphi (z) \rvert \lvert \xi \rvert \sigma ( \lvert \xi \rvert) d z = \int_{\mathbb{T}^3} \lvert \nabla_z \varphi (z) \rvert \lvert z \rvert \sigma (\epsilon \lvert z \rvert) dz.
\end{equation*}
Thanks to the assumption that $\sigma(\lvert \xi \rvert) \rightarrow 0$ as $\lvert \xi \rvert \rightarrow 0$, we find that $D_\epsilon (\textbf{u}) \rightarrow 0$, in $L^1 (\mathbb{T}^3 \times (0,T))$ as $\epsilon \rightarrow 0$ by the Lebesgue dominated convergence theorem. Therefore $D( \textbf{u}) = 0$.
\end{proof}
The goal of the next section is to interpret criterion \eqref{zerodefect}, i.e. by finding the biggest function space for which bound \eqref{zerodefect} holds.
\section{Proof of conservation of energy} \label{regularitysection}
We will prove several different sufficient conditions for \eqref{zerodefect} to hold. We will then prove that if condition \eqref{zerodefect} is satisfied, conservation of energy holds. In particular, one can make different regularity assumptions for $w$ due to the aforementioned anisotropy of the velocity field. In this section, we consider type I weak solutions (cf. Definition \ref{weaksolutiondefinition}), i.e. we assume that $w \in L^2 ((0,T); L^2 (\mathbb{T}^3))$.

First we will find a suitable Besov space such that condition \eqref{zerodefect} is satisfied. Later in this section we will introduce the logarithmic H\"older space in order to show that even if the solution is in a slightly weaker function space the function $\sigma$ in equation \eqref{zerodefect} is still $o(1)$ as $\lvert \xi \rvert \rightarrow 0$. We will also show that the Onsager exponent (the regularity threshold for energy conservation) can be lowered if $w $ is H\"older continuous. Finally, we will show that there can be a different Onsager exponent in the vertical direction compared to the horizontal directions.

Criterion \eqref{zerodefect} seems to dictate the condition $u (\cdot, t), v(\cdot, t) \in B^\alpha_{3, \infty} (\mathbb{T}^3)$ with $\alpha > \frac{1}{3}$, because it is possible to bound $\lvert \delta \textbf{u} \rvert$ by
\begin{equation*}
\lvert \delta \textbf{u} \rvert \leq \lvert \delta u \rvert + \lvert \delta v \rvert + \lvert \delta w \rvert.
\end{equation*}
The first two terms in this equation multiplied by the term $( \lvert\delta u \rvert^2 + \lvert\delta v\rvert^2)$ in equation \eqref{zerodefect} give us products of the form $\lvert \delta u \rvert^3, \lvert \delta v \rvert^3, \lvert \delta u \rvert \lvert \delta v \rvert^2 $ and $\lvert \delta v \rvert \lvert \delta u \rvert^2$. Because we want to treat the horizontal velocities on an equal footing, we want to impose the same conditions on the horizontal velocities. The terms in $D_\epsilon (\textbf{u})$ with only horizontal velocities should have sufficient regularity such that the function $\sigma$ in equation \eqref{zerodefect} is $o(1)$ as $\lvert \xi \rvert \rightarrow 0$. This seems to necessitate the assumption $u(\cdot, t) , v(\cdot, t) \in B^\alpha_{3, \infty} (\mathbb{T}^3)$ with $\alpha > \frac{1}{3}$.

These conditions alone do not ensure energy conservation, because in that case one needs to assume that $w (\cdot, t) \in B^\alpha_{3,\infty} (\mathbb{T}^3)$ with $\alpha > \frac{1}{3}$ as well. That would be a very strong requirement, because implicitly this imposes conditions on $u$ and $v$ via equation \eqref{equationw}. These conditions cannot be inferred directly however. Instead we need a stronger condition on $u$ and $v$, in order to not need any assumption on $w$ (except for it being in $L^2 ((0,T); L^2 (\mathbb{T}^3))$, which is required in the definition of a type I weak solution).
\subsection{Sufficient condition for energy conservation}
We now prove a proposition which gives a sufficient condition for \eqref{zerodefect} to be true, which then implies conservation of energy. This result coincides with the theorem from the introduction.
\begin{proposition}[Conservation of energy] \label{exponentlemma}
Let $\mathbf{u}$ be a type I weak solution of the hydrostatic Euler equations and assume that $u,v \in L^4( (0,T); B^\alpha_{4, \infty} (\mathbb{T}^3) )$ with $\alpha > \frac{1}{2}$. Then $D(\mathbf{u}) = 0$ (where $D(\mathbf{u}) $ was defined in equation \eqref{defectdefinition}), which implies that the weak solution conserves energy. That is,
\begin{equation} \label{energyconservationeq}
\lVert u(t_1, \cdot ) \rVert_{L^2}^2 + \lVert v(t_1, \cdot) \rVert^2_{L^2} = \lVert u(t_2, \cdot ) \rVert_{L^2}^2 + \lVert v(t_2, \cdot) \rVert^2_{L^2}, \; \text{ for a.e. $t_1, t_2 \in (0,T)$.}
\end{equation}
\end{proposition}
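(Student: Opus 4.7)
The plan is to verify the structural inequality \eqref{zerodefect} of Proposition \ref{zerodefectlemma}, which immediately forces $D(\mathbf{u})=0$, and then to derive \eqref{energyconservationeq} from the local energy balance \eqref{equationofenergy} of Theorem \ref{energyequationtheorem}. The $L^4$-in-space assumption of Theorem \ref{energyequationtheorem} comes for free, since $B^\alpha_{4,\infty}(\mathbb{T}^3)\hookrightarrow L^4(\mathbb{T}^3)$ by Definition \ref{besovdef}.

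First I split $|\delta\mathbf{u}|\leq |\delta u|+|\delta v|+|\delta w|$ and separate the integrand of \eqref{zerodefect} into purely horizontal cubic terms and mixed terms involving $\delta w$. For the horizontal ones, H\"older on the bounded torus gives $\int_{\mathbb{T}^3}|\delta u|^3\,dx\leq |\mathbb{T}^3|^{1/4}\|\delta u\|_{L^4}^3$, and the Besov seminorm \eqref{besovseminorm2} yields $\|\delta u(\xi;\cdot,t)\|_{L^4}\leq |\xi|^\alpha |u(t,\cdot)|_{B^\alpha_{4,\infty}}$; cross terms such as $|\delta u|^2|\delta v|$ are handled identically via H\"older with exponents $(2,4,4)$. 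The total horizontal contribution is therefore bounded by $|\xi|^{3\alpha}$ times a sum of products of Besov seminorms of $u,v$.

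The crux of the argument is the mixed piece $\int_{\mathbb{T}^3}|\delta w|(|\delta u|^2+|\delta v|^2)\,dx$, where only $w\in L^2$ is available. By Cauchy--Schwarz in $x$,
\begin{equation*}
\int_{\mathbb{T}^3}|\delta w||\delta u|^2\,dx\leq \|\delta w\|_{L^2}\,\|\delta u\|_{L^4}^2\leq 2\|w(t,\cdot)\|_{L^2}\,|\xi|^{2\alpha}\,|u(t,\cdot)|_{B^\alpha_{4,\infty}}^2.
\end{equation*}
Putting everything together, \eqref{zerodefect} holds with
\begin{equation*}
C(t)=K\Big(|u|_{B^\alpha_{4,\infty}}^3+|v|_{B^\alpha_{4,\infty}}^3+\|w\|_{L^2}\big(|u|_{B^\alpha_{4,\infty}}^2+|v|_{B^\alpha_{4,\infty}}^2\big)\Big)
\end{equation*}
and $\sigma(r)=r^{2\alpha-1}+r^{3\alpha-1}$ for $r\leq 1$ (bounded otherwise). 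Since $\alpha>\tfrac{1}{2}$, both exponents $2\alpha-1$ and $3\alpha-1$ are strictly positive, so $\sigma(r)\to 0$ as $r\to 0$; the exponent $2\alpha-1$ on the mixed piece is precisely what forces the threshold $\alpha>\tfrac{1}{2}$. Moreover $C\in L^1(0,T)$: the cubic horizontal terms lie in $L^{4/3}(0,T)\subset L^1(0,T)$ by the hypothesis $u,v\in L^4((0,T);B^\alpha_{4,\infty})$, while the mixed term lies in $L^1(0,T)$ by Cauchy--Schwarz in $t$ using $w\in L^2((0,T);L^2(\mathbb{T}^3))$ and $|u|_{B^\alpha_{4,\infty}}^2\in L^2(0,T)$. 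Proposition \ref{zerodefectlemma} then yields $D(\mathbf{u})=0$.

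Finally, with $D(\mathbf{u})=0$, equation \eqref{equationofenergy} reduces to $\partial_t(u^2+v^2)+\nabla\cdot[(u^2+v^2)\mathbf{u}]+2\nabla\cdot(p\mathbf{u})=0$ in $\mathcal{D}'(\mathbb{T}^3\times(0,T))$. Testing against $\psi(x,t)=\chi(t)$ with $\chi\in\mathcal{D}(0,T)$ kills both divergence contributions (no spatial derivatives fall on $u,v,p$), leaving $\partial_t\int_{\mathbb{T}^3}(u^2+v^2)\,dx=0$ in $\mathcal{D}'(0,T)$. Hence $t\mapsto \int_{\mathbb{T}^3}(u^2(t)+v^2(t))\,dx$ admits a constant Lebesgue representative on $(0,T)$, which is exactly \eqref{energyconservationeq}.
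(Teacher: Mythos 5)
Your proof is correct and follows essentially the same route as the paper: one verifies hypothesis \eqref{zerodefect} of Proposition \ref{zerodefectlemma} to get $D(\mathbf{u})=0$ and then integrates the local energy balance \eqref{equationofenergy} against a time-only test function. The only difference is that the paper does not split $\delta\mathbf{u}$ into components but applies a single Cauchy--Schwarz, $\lVert\delta\mathbf{u}\rVert_{L^2}\big(\lVert\delta u\rVert_{L^4}^2+\lVert\delta v\rVert_{L^4}^2\big)\le 2\lVert\mathbf{u}\rVert_{L^2}\,\lvert\xi\rvert^{2\alpha}\big(\lVert u\rVert_{B^\alpha_{4,\infty}}^2+\lVert v\rVert_{B^\alpha_{4,\infty}}^2\big)$; your extra gain of $\lvert\xi\rvert^{3\alpha}$ on the purely horizontal piece is harmless but buys nothing, since in both arguments the mixed term involving $\delta w$ is the bottleneck that fixes the threshold $\alpha>\tfrac{1}{2}$.
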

\begin{proof}
By the definition of a type I weak solution for the hydrostatic Euler equations (Definition \ref{weaksolutiondefinition}) it is assumed that $\textbf{u} \in L^2 ( (0,T); L^2 (\mathbb{T}^3))$. Because $u$ and $v$ are in addition assumed to be in $L^4 ((0,T); L^4 (\mathbb{T}^3))$ then by Theorem \ref{energyequationtheorem} the defect term $D (\textbf{u})$ exists and is well-defined. Moreover, the equation of local energy balance holds.
By using the additional regularity assumptions that $u(\cdot, t),v (\cdot, t) \in B^\alpha_{4, \infty} (\mathbb{T}^3)$ with $\alpha > \frac{1}{2}$ we get that
\begingroup
\allowdisplaybreaks
\begin{align*}
&\int_{\mathbb{T}^3} \lvert \delta \textbf{u} (\xi ; x, t) \rvert \bigg(\lvert  \delta u (\xi ; x, t) \rvert^2 + \lvert  \delta v (\xi ; x, t) \rvert^2 \bigg) dx \\
&\leq \lVert \delta \textbf{u} (\xi ; \cdot, t) \rVert_{L^2} ( \lVert \delta u (\xi ; \cdot, t) \rVert_{L^4}^2 + \lVert \delta v (\xi ; \cdot, t) \rVert_{L^4}^2 ) \\
&\leq 2 \lVert \textbf{u} (\cdot, t) \rVert_{L^2} \lvert \xi \rvert^{2 \alpha} \bigg( \bigg\lVert \frac{\delta u (\xi ; \cdot, t)}{\lvert \xi \rvert^\alpha} \bigg\rVert_{L^4}^2 + \bigg\lVert \frac{\delta v (\xi ; \cdot, t)}{\lvert \xi \rvert^\alpha} \bigg\rVert_{L^4}^2 \bigg) \\
&\leq 2 \lVert \textbf{u} (\cdot, t) \rVert_{L^2} \lvert \xi \rvert^{2 \alpha} \bigg( \lVert  u (\cdot, t) \rVert_{B^\alpha_{4, \infty}}^2 + \lVert  v (\cdot, t) \rVert_{B^\alpha_{4, \infty}}^2 \bigg).
\end{align*}
\endgroup
Because $2 \alpha > 1$, we can take (in the notation of Proposition \ref{zerodefectlemma})
\begin{equation*}
\sigma (\lvert \xi \rvert) \coloneqq \lvert \xi \rvert^{2\alpha - 1},
\end{equation*}
which indeed goes to zero as $\lvert \xi \rvert$ tends to zero. Finally, we observe that
\begin{equation*}
C(t) = \lVert \textbf{u} (\cdot, t) \rVert_{L^2} \bigg( \lVert  u (\cdot, t) \rVert_{B^\alpha_{4, \infty}}^2 + \lVert  v (\cdot, t) \rVert_{B^\alpha_{4, \infty}}^2 \bigg) \in L^1  (0,T) ,
\end{equation*}
since $\lVert \textbf{u} (\cdot, t) \rVert_{L^2} \in L^2 (0,T)$ and $\lVert  u (\cdot, t) \rVert_{B^\alpha_{4, \infty}}, \lVert  v (\cdot, t) \rVert_{B^\alpha_{4, \infty}} \in L^4 (0,T)$.
Therefore the conditions of Proposition \ref{zerodefectlemma} are satisfied and as a consequence it follows that $D (\textbf{u}) = 0$.

Now according to equation \eqref{equationofenergy} we have
\begin{equation*}
\partial_t (u^2 + v^2) + \nabla \cdot \bigg[ (u^2 + v^2 + 2 p) \textbf{u} \bigg] = 0,
\end{equation*}
which holds in the sense of distributions. Hence for any $\psi \in \mathcal{D} ( \mathbb{T}^3 \times (0,T); \mathbb{R})$ we have that
\begin{equation*}
\int_0^T \int_{\mathbb{T}^3} (u^2 + v^2) \partial_t \psi d x dt= - \int_0^T \int_{\mathbb{T}^3} \nabla \psi \cdot \bigg( \bigg(  u^2 + v^2+ 2p \bigg) \textbf{u} \bigg) d x dt.
\end{equation*}
Let $0 < t_1 < t_2 < T$ and choose the following test function
\begin{equation*}
\psi (t) = \int_0^t \bigg( \phi_{\epsilon} (t'-t_1) - \phi_{\epsilon} (t' - t_2) \bigg) dt'.
\end{equation*}
Here $\phi$ is a $C_c^\infty (\mathbb{R}; \mathbb{R})$ mollifier such that $\int_{\mathbb{R}} \phi (t) dt = 1$, $\text{supp} \, \phi \subset (-1,1)$ and $\phi_\epsilon (t) = \epsilon^{-1} \phi (t / \epsilon)$. Note that $\psi$ is zero for $t\in (0,t_1-\epsilon)\cup (t_2+\epsilon,T)$ (i.e. $\supp \psi \subset (0,T)$), for $\epsilon$ sufficiently small. The derivative of this test function is given by
\begin{equation*}
\partial_t \psi (t) = \phi_{\epsilon} (t-t_1) - \phi_{\epsilon} (t - t_2).
\end{equation*}
Because $\psi$ has no spatial dependence, it follows that the spatial gradient vanishes. As a result we get that
\begin{equation*}
\int_{t_1 - \epsilon}^{t_1 + \epsilon} \int_{\mathbb{T}^3} (u^2 + v^2) \phi_\epsilon (t - t_1) d x dt = \int_{t_2 - \epsilon}^{t_2 + \epsilon} \int_{\mathbb{T}^3} (u^2 + v^2) \phi_\epsilon (t - t_2) d x dt.
\end{equation*}
Then by using the Lebesgue differentiation theorem in the limit $\epsilon \rightarrow 0$ \cite{folland,zygmund} we can conclude that
\begin{equation*}
\int_{\mathbb{T}^3} \bigg( u^2 (x,t_1) + v^2 (x, t_1) \bigg) d x = \int_{\mathbb{T}^3} \bigg( u^2 (x,t_2) + v^2 (x, t_2) \bigg) d x,
\end{equation*}
for almost every $t_1,t_2 \in (0,T)$, which is equation \eqref{energyconservationeq}.
\end{proof}
It now becomes plausible why the Onsager exponent is $\frac{1}{2}$ and not $\frac{1}{3}$, the required regularity in order to ensure that the function $\sigma$ in criterion \eqref{zerodefect} is $o(1)$ has to be distributed over two terms in the product instead of three (the latter being the case for the Euler equations, see \cite{duchon} for more details). This will be discussed further in section \ref{conclusion}.
\subsection{Energy conservation for log-H\"older regularity}
Note that it is also possible to satisfy criterion \eqref{zerodefect} by imposing a different regularity condition on the solution. To this end, we introduce another function space.
\begin{definition} \label{logholderdef}
Let $U$ be a bounded and closed set. The logarithmic H\"older space $C^{0,\gamma}_{\mathrm{log}} (U)$ with $0 < \gamma < 1$ consists of all continuous functions (i.e. in $C^0 (U)$) for which the following seminorm is finite
\begin{equation} \label{seminorm}
\lvert u \rvert_{C^{0,\gamma}_{\mathrm{log}}} \coloneqq \sup_{x,y \in U, x \neq y} \frac{\lvert u (x) - u(y) \rvert}{\lvert x - y \rvert^\gamma } ( 1 + \log^- (\lvert x - y \rvert)).
\end{equation}
\end{definition}
It is straightforward to see that $C^{0,\gamma}_{\mathrm{log}} (\mathbb{T}^3) \subset C^{0, \gamma} (\mathbb{T}^3)$. One can also show that $C^{0,\gamma}_{\mathrm{log}} (\mathbb{T}^3)$ is a Banach space (see \cite{dominguez}).
We now prove that type I weak solutions with logarithmic H\"older continuity with exponent $\frac{1}{2}$ have a defect term which is zero and hence conserve energy.
\begin{proposition} \label{loglemma}
Suppose $\mathbf{u}$ is a type I weak solution of the hydrostatic Euler equations and suppose that $u,v \in L^4((0,T); C^{0,1/2}_{\mathrm{log}} (\mathbb{T}^3))$. Then $D (\mathbf{u}) = 0$, which then implies the conservation of energy  \eqref{energyconservationeq}.
\end{proposition}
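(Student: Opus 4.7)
The plan is to verify the hypotheses of Proposition \ref{zerodefectlemma} with an appropriate choice of $\sigma$ coming from the logarithmic factor in the $C^{0,1/2}_{\log}$ seminorm, and then conclude energy conservation exactly as at the end of the proof of Proposition \ref{exponentlemma}. Since $C^{0,1/2}_{\log}(\mathbb{T}^3) \hookrightarrow L^\infty(\mathbb{T}^3) \hookrightarrow L^4(\mathbb{T}^3)$, the assumption $u,v \in L^4((0,T); C^{0,1/2}_{\log}(\mathbb{T}^3))$ implies $u,v \in L^4((0,T); L^4(\mathbb{T}^3))$, so Theorem \ref{energyequationtheorem} gives the local energy balance and a well-defined defect $D(\mathbf{u})$.

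The central step is bounding the left-hand side of \eqref{zerodefect}. Following the same idea as in Proposition \ref{exponentlemma}, apply Cauchy--Schwarz in $L^2_x$ to split the cubic integrand, giving
\begin{equation*}
\int_{\mathbb{T}^3} |\delta \mathbf{u}|(|\delta u|^2 + |\delta v|^2)\, dx \;\leq\; 2\|\mathbf{u}(\cdot,t)\|_{L^2}\bigl(\|\delta u(\xi;\cdot,t)\|_{L^4}^2 + \|\delta v(\xi;\cdot,t)\|_{L^4}^2\bigr).
\end{equation*}
Now use the logarithmic H\"older seminorm \eqref{seminorm}: for $u \in C^{0,1/2}_{\log}(\mathbb{T}^3)$ and $|\xi|$ small one has
\begin{equation*}
|\delta u(\xi;x,t)| \;\leq\; |u(\cdot,t)|_{C^{0,1/2}_{\log}}\,\frac{|\xi|^{1/2}}{1+\log^-|\xi|},
\end{equation*}
so raising to the fourth power, integrating, and taking the square root yields
\begin{equation*}
\|\delta u(\xi;\cdot,t)\|_{L^4}^2 \;\leq\; C\,|u(\cdot,t)|_{C^{0,1/2}_{\log}}^2\,\frac{|\xi|}{(1+\log^-|\xi|)^2},
\end{equation*}
and the same for $v$. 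Combining gives the key bound
\begin{equation*}
\int_{\mathbb{T}^3} |\delta \mathbf{u}|(|\delta u|^2 + |\delta v|^2)\, dx \;\leq\; C(t)\,|\xi|\,\sigma(|\xi|),
\end{equation*}
with the choice $\sigma(|\xi|) := (1+\log^-|\xi|)^{-2}$, which indeed tends to $0$ as $|\xi| \to 0$, and
\begin{equation*}
C(t) \;=\; C\,\|\mathbf{u}(\cdot,t)\|_{L^2}\bigl(|u(\cdot,t)|_{C^{0,1/2}_{\log}}^2 + |v(\cdot,t)|_{C^{0,1/2}_{\log}}^2\bigr).
\end{equation*}

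It remains to check that $C \in L^1(0,T)$: since $w \in L^2(\mathbb{T}^3\times(0,T))$ and $u,v \in L^\infty((0,T);L^2)$ we have $\|\mathbf{u}(\cdot,t)\|_{L^2} \in L^2(0,T)$, while the squared log-H\"older seminorms lie in $L^2(0,T)$ by hypothesis, so H\"older's inequality gives $C \in L^1(0,T)$. Proposition \ref{zerodefectlemma} then yields $D(\mathbf{u})=0$, and energy conservation \eqref{energyconservationeq} follows by inserting a time-cutoff test function $\psi(t)=\int_0^t(\phi_\epsilon(t'-t_1)-\phi_\epsilon(t'-t_2))\,dt'$ into the resulting distributional equation \eqref{equationofenergy} and passing to the limit via the Lebesgue differentiation theorem, exactly as in the final paragraph of the proof of Proposition \ref{exponentlemma}. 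No step presents a serious obstacle; the only subtlety is recognising that the extra logarithmic factor in the seminorm gives precisely the $o(1)$ decay needed for $\sigma$, compensating for the fact that log-H\"older regularity is only borderline weaker than $C^{0,1/2}$.
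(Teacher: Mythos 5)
Your proposal is correct and follows essentially the same route as the paper: both split the cubic term as $L^2\times L^4\times L^4$, use the log-H\"older seminorm to extract $\sigma(\lvert\xi\rvert)=(1-\log\lvert\xi\rvert)^{-2}$ for $\lvert\xi\rvert<1$, invoke Proposition \ref{zerodefectlemma}, and conclude as in Proposition \ref{exponentlemma}. Your explicit checks that the hypotheses of Theorem \ref{energyequationtheorem} hold and that $C\in L^1(0,T)$ are details the paper leaves implicit, but they match its argument.
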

\begin{proof}
The proof is almost the same as the proof of Proposition \ref{exponentlemma}. The only difference being that we now have the inequality (for $\lvert \xi \rvert < 1$)
\begin{align*}
&\lVert \delta \textbf{u} (\xi ; \cdot , t) \rVert_{L^2} ( \lVert \delta u (\xi ; \cdot, t) \rVert_{L^4}^2 + \lVert \delta v (\xi ; \cdot, t) \rVert_{L^4}^2 ) \\
&\leq 2 \lVert \textbf{u} (\cdot, t) \rVert_{L^2} \lvert \xi \rvert \frac{1}{\big(1 - \log (\lvert \xi \rvert ) \big)^2} \big( \lVert  u (\cdot, t) \rVert_{C^{0,1/2}_{\mathrm{log}}}^2 + \lVert  v (\cdot, t) \rVert_{C^{0,1/2}_{\mathrm{log}}}^2 \bigg).
\end{align*}
This means that in the notation of Proposition \ref{zerodefectlemma} we have that (again for $\lvert \xi \rvert < 1$)
\begin{equation*}
\sigma (\lvert \xi \rvert) \coloneqq \frac{1}{\big(1 - \log (\lvert \xi \rvert ) \big)^2},
\end{equation*}
which clearly goes to zero as $\lvert \xi \rvert \rightarrow 0$. Therefore the conditions of Proposition \ref{zerodefectlemma} are satisfied and the defect term is zero. The proof that the solution conserves energy is analogous to the proof of Proposition \ref{exponentlemma}.
\end{proof}
\begin{remark} \label{logholderdefect}
Note that $C^{0,1/2}_{\mathrm{log}} (\mathbb{T}^3) \not\subset B^\alpha_{4,\infty} (\mathbb{T}^3)$ with $\alpha > \frac{1}{2}$. Therefore the above proposition is not contained within Proposition \ref{exponentlemma}.
\end{remark}
\begin{remark}
We observe that the log-H\"older spaces can also be used for the original Onsager conjecture for the Euler equations. We recall that in \cite{duchon} it was proven that
\begin{equation} \label{eulercondition}
\int_{\mathbb{T}^3} \lvert \delta \textbf{u} (\xi; x,t)  \rvert^3 dx \leq C(t) \lvert \xi \rvert \sigma ( \lvert \xi \rvert),
\end{equation}
ensures that the defect term is zero (in the equation of local energy balance for a weak solution of the Euler equations) Here we used the same notation as in Proposition \ref{exponentlemma}, where $\mathbf{u}$ is a weak solution of the three-dimensional Euler equations. Now we observe that under the assumption that $u(\cdot, t) \in C^{0,1/3}_{\mathrm{log}} (\mathbb{T}^3)$ condition \eqref{eulercondition} is satisfied with the following choice for $\sigma$
\begin{equation*}
\sigma (\lvert \xi \rvert) \coloneqq \frac{1}{\big(1 - \log (\lvert \xi \rvert ) \big)^3}, \quad \lvert \xi \rvert < 1.
\end{equation*}
It is clear that $\sigma (\lvert \xi \rvert) \rightarrow 0$ as $\lvert \xi \rvert \rightarrow 0$ and therefore there is conservation of energy. Note that also in this case $\textbf{u} (\cdot, t) \in C^{0,1/3}_{\mathrm{log}} (\mathbb{T}^3)$ is a different condition from $\textbf{u} (\cdot, t) \in B^\alpha_{3, \infty} (\mathbb{T}^3)$ with $\alpha > \frac{1}{3}$. Therefore log-H\"older spaces can be applied to prove the original Onsager conjecture. It is also possible to consider even slower decay, e.g. a power of a logarithm. In addition, one can also consider logarithmic Besov spaces. A result of this type for the Euler equations was proven in \cite{cheskidovfriedlander}.

The consideration of logarithmic H\"older spaces was inspired by the proof of global existence for the 2D Euler equations with the vorticity being bounded in $L^\infty$. In that case a quasilinear Lipschitz condition is used of the form \cite{majda,marchioro,yudovich}
\begin{equation*}
\lvert u(x) - u (y) \rvert \leq K \varphi (\lvert x - y \rvert),
\end{equation*}
where we have defined
\begin{align*}
\varphi (r) \coloneqq \begin{cases}
r (1 - \log r), \quad &r < 1, \\
1, \quad &r \geq 1.
\end{cases}
\end{align*}
In order to generalise this condition we have introduced the concept of logarithmic H\"older continuity.
\end{remark}
\subsection{H\"older regularity for the vertical velocity}
Next we consider $w$ to be H\"older continuous with some exponent $\beta$. In that case the Onsager exponent (for $u$ and $v$) can be lower, as we will see in the next proposition.

\begin{proposition} \label{holderlemma}
Let $\mathbf{u} $ be a type I weak solution of the hydrostatic Euler equations and assume that $w \in L^{p_1} ((0,T); B^\beta_{p_2, \infty} (\mathbb{T}^3))$ with $\beta > 0$ and $1 \leq p_1, p_2 \leq \infty$. Moreover, assume that $u,v \in L^{q_1} ((0,T); B^\alpha_{q_2,\infty} (\mathbb{T}^3))$ with $2 p_1' \leq q_1, 2 p_2' \leq q_2$ (where $p_1', p_2'$ are the H\"older conjugates of $p_1$ and $p_2$) and $2 \alpha > 1 - \beta$ (with $\alpha \geq \beta$). Then $D (\mathbf{u}) = 0$, which implies conservation of energy \eqref{energyconservationeq}.
\end{proposition}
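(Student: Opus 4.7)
The plan is to adapt the proof of Proposition \ref{exponentlemma}, exploiting the extra H\"older-type regularity of $w$. By Proposition \ref{zerodefectlemma} it suffices to verify the inequality \eqref{zerodefect} with some $\sigma(|\xi|) \to 0$; once this is done, $D(\mathbf{u}) = 0$, and the same test-function argument used at the end of the proof of Proposition \ref{exponentlemma} delivers \eqref{energyconservationeq}.

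The main step is to bound the spatial integrand in \eqref{zerodefect}. Writing $|\delta \mathbf{u}| \le |\delta u| + |\delta v| + |\delta w|$, I would split the left-hand side into a ``vertical'' piece $\int_{\mathbb{T}^3} |\delta w|(|\delta u|^2 + |\delta v|^2)\,dx$ and two ``horizontal'' pieces $\int_{\mathbb{T}^3} |\delta u|(|\delta u|^2 + |\delta v|^2)\,dx$ and its $\delta v$ analogue. To each I apply H\"older's inequality with exponents $(p_2, 2p_2', 2p_2')$. For the vertical piece,
\begin{equation*}
\int_{\mathbb{T}^3} |\delta w|\,(|\delta u|^2 + |\delta v|^2)\,dx \le \|\delta w\|_{L^{p_2}}\bigl(\|\delta u\|_{L^{2p_2'}}^2 + \|\delta v\|_{L^{2p_2'}}^2\bigr),
\end{equation*}
and the defining Besov bounds for $w$, $u$, $v$ together with the embedding $B^\alpha_{q_2,\infty}(\mathbb{T}^3) \hookrightarrow B^\alpha_{2p_2',\infty}(\mathbb{T}^3)$ (valid on the torus since $q_2 \ge 2p_2'$) control this by $C\,|\xi|^{\beta+2\alpha}\,\|w\|_{B^\beta_{p_2,\infty}}\bigl(\|u\|_{B^\alpha_{q_2,\infty}}^2 + \|v\|_{B^\alpha_{q_2,\infty}}^2\bigr)$. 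The horizontal pieces are treated by the same H\"older tuple and yield the stronger decay $|\xi|^{3\alpha}$. Since $\alpha \ge \beta$ and $2\alpha + \beta > 1$ force $3\alpha \ge 2\alpha + \beta > 1$, both contributions are at most $C(t)\,|\xi|\cdot\sigma(|\xi|)$ with $\sigma(|\xi|) := |\xi|^{\beta+2\alpha-1}$, and $\sigma(|\xi|) \to 0$ as $|\xi| \to 0$.

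The time-dependent coefficient $C(t)$ is a sum of products of the form $\|w(\cdot,t)\|_{B^\beta_{p_2,\infty}}\|(u,v)(\cdot,t)\|_{B^\alpha_{q_2,\infty}}^2$ and $\|(u,v)(\cdot,t)\|_{B^\alpha_{q_2,\infty}}^3$. H\"older's inequality in time with the exponents $(p_1, 2p_1', 2p_1')$ (respectively $(q_1/3, q_1/3, q_1/3)$ for the purely horizontal piece) is admissible precisely because $q_1 \ge 2p_1'$, so $C \in L^1(0,T)$. The hypotheses of Proposition \ref{zerodefectlemma} are therefore fulfilled and $D(\mathbf{u}) = 0$; energy conservation then follows verbatim from the mollification/test-function argument in the final paragraph of the proof of Proposition \ref{exponentlemma}.

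The technical hurdle I expect is the purely horizontal contribution: the H\"older choice $(p_2, 2p_2', 2p_2')$ requires controlling $\|\delta u\|_{L^{p_2}}$ via $u \in B^\alpha_{p_2,\infty}$, which follows from $u \in B^\alpha_{q_2,\infty}$ only if $q_2 \ge p_2$. This holds automatically when $p_2 \le 3$ (where $q_2 \ge 2p_2' \ge p_2$) and in the main regime of interest, namely $w \in L^{p_1}(C^{0,\beta})$ with $u,v \in L^{q_1}(C^{0,\alpha})$, corresponding to $p_2 = q_2 = \infty$. In intermediate regimes with $q_2 < p_2$ one must either invoke a Besov embedding with loss of derivatives, or substitute a different exponent pairing (for instance placing the $L^2$ bound of $\delta u$ coming from $u \in L^\infty((0,T);L^2(\mathbb{T}^3))$ on one factor and keeping Besov regularity on the others, as in Proposition \ref{exponentlemma}) to retain enough $|\xi|$-decay to beat $|\xi|^1$.
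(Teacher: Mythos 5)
Your argument is essentially the paper's: the paper likewise verifies \eqref{zerodefect} by a single application of H\"older's inequality with exponents $(p_2, 2p_2', 2p_2')$ in space (keeping $\delta\mathbf{u}$ together rather than splitting off $\delta w$), obtains the decay $\lvert \xi \rvert^{2\alpha+\beta}$ so that $\sigma(\lvert \xi \rvert) = \lvert \xi \rvert^{2\alpha+\beta-1}$, and then integrates in time exactly as you propose. The ``technical hurdle'' you flag for the purely horizontal contribution is implicitly present in the paper's own proof as well, which simply writes $\lVert \mathbf{u}(\cdot,t)\rVert_{B^\beta_{p_2,\infty}}$ for the full velocity field without commenting on the case $q_2 < p_2$; your proposal is, if anything, more careful on this point.
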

\begin{proof}
One can obtain the following estimates
\begingroup
\allowdisplaybreaks
\begin{align*}
&\int_{\mathbb{T}^3} \lvert \delta \textbf{u} (\xi; x,t)  \rvert \bigg(\lvert \delta u (\xi; x,t) \rvert^2 + \lvert  \delta v (\xi; x,t) \rvert^2 \bigg) dx \\
&\leq \lVert \delta \textbf{u} (\xi; \cdot, t) \rVert_{L^{p_2}} (\lVert \delta u (\xi; \cdot ,t) \rVert_{L^{2 p_2'}}^2 + \lVert \delta v (\xi; \cdot ,t) \rVert_{L^{2 p_2'}}^2) \\
&\leq \lvert \xi \rvert^{2 \alpha + \beta} \frac{\lVert \delta \textbf{u} (\xi; \cdot, t) \rVert_{L^{p_2}}}{\lvert \xi \rvert^\beta} \Bigg(\frac{\lVert \delta u (\xi; \cdot ,t) \rVert_{L^{2 p_2'}}^2}{\lvert \xi \rvert^{2 \alpha}} + \frac{\lVert \delta v (\xi; \cdot ,t) \rVert_{L^{2 p_2'}}^2}{\lvert \xi \rvert^{2 \alpha}} \Bigg) \\
&\leq \lvert \xi \rvert^{2 \alpha + \beta} \lVert \textbf{u} (\cdot, t) \rVert_{B^\beta_{p_2, \infty}} (\lVert u (\cdot, t) \rVert_{B^\alpha_{2 p_2', \infty}}^2 + \lVert v (\cdot, t) \rVert_{B^\alpha_{2 p_2', \infty}}^2 ).
\end{align*}
\endgroup
We have used the assumed Besov regularity of $w$ in estimating the term $\frac{\lVert \delta \textbf{u} (\xi; \cdot, t) \rVert_{L^{p_2}}}{\lvert \xi \rvert^\beta}$. We can now take $\sigma (\lvert \xi \rvert) \coloneqq \lvert \xi \rvert^{2 \alpha + \beta - 1}$ which goes to zero as $\lvert \xi \rvert \rightarrow 0$ since $2 \alpha + \beta > 1$ by assumption. It is also the case that $\lVert \textbf{u} (\cdot, t) \rVert_{B^\beta_{p_2, \infty}} (\lVert u (\cdot, t) \rVert_{B^\alpha_{2 p_2', \infty}}^2 + \lVert v (\cdot, t) \rVert_{B^\alpha_{2 p_2', \infty}}^2 ) \in L^1 (0,T)$. We can then apply Proposition \ref{zerodefectlemma} to conclude that $D (\textbf{u}) = 0$. By analogous reasoning to the proof of Proposition \ref{exponentlemma} we conclude that the solution conserves energy.
\end{proof}
\begin{remark}
In particular, Proposition \ref{holderlemma} implies that energy is conserved as soon as $u,v,w \in L^3 ((0,T); B^{\alpha}_{3,\infty} (\mathbb{T}^3))$ for $\alpha > \frac{1}{3}$. This means that the conservation part of the `original' Onsager conjecture still holds for the inviscid primitive equations. This is consistent with the general result for conservation laws in \cite{bardos2019}.
\end{remark}
\subsection{Anisotropic sufficient conditions for energy conservation}

As was mentioned before, the hydrostatic Euler equations are anisotropic in terms of regularity. The vertical velocity $w$ has a lower regularity than the horizontal velocities $u$ and $v$. However, one can observe from equation \eqref{equationw} that $w$ is one degree less regular in the horizontal variables but one degree more regular in the $z$-variable. We will use this observation in the next proposition to show that there are `vertical' and `horizontal' Onsager exponents. To this end, we introduce the following norm (for $0 < \alpha < \beta < 1$)
\begin{equation} \label{anisotropicbesovnorm}
\lVert f \rVert_{B^\alpha_{3,\infty} (\mathbb{T}; B^\beta_{3,\infty} (\mathbb{T}^2))} \coloneqq \lVert f \rVert_{L^3 (\mathbb{T}^3)} + \sup_{\xi \in \mathbb{R}^3 \backslash \{ 0 \}} \frac{\lVert \delta f (\cdot \; ; \xi) \rVert_{L^3 (\mathbb{T}^3)}}{\lvert \xi \rvert^\alpha} + \sup_{\xi_h \in \mathbb{R}^2 \backslash \{ 0 \}} \frac{\lVert \delta f (\cdot \; ; (\xi_h, 0)) \rVert_{L^3 (\mathbb{T}^3)}}{\lvert \xi \rvert^\beta} .
\end{equation}
A function being in the space $B^\alpha_{3,\infty} (\mathbb{T}; B^\beta_{3,\infty} (\mathbb{T}^2))$ means that it is Besov regular in the $z$-direction with exponent $\alpha$, while it has regularity $\beta$ in the horizontal directions.
\begin{proposition}[Horizontal and vertical Onsager exponents] \label{directionlemma}
Let $\mathbf{u}$ be a type I weak solution of the hydrostatic Euler equations such that $u,v \in L^3 ((0,T); B^\alpha_{3,\infty} (\mathbb{T} ; B^\beta_{3, \infty} (\mathbb{T}^2)))$ with $\beta > \frac{2}{3}, \alpha > \frac{1}{3}$ and $2 \alpha + \beta > 2$. Under these assumptions it holds that $D(\mathbf{u}) = 0$. In particular, the weak solution conserves energy \eqref{energyconservationeq}.
\end{proposition}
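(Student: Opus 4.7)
The plan is to apply Proposition \ref{zerodefectlemma} by verifying criterion \eqref{zerodefect}; once that bound is established, conservation of energy follows by the same Lebesgue-differentiation argument used at the end of the proof of Proposition \ref{exponentlemma}. The main difficulty is that criterion \eqref{zerodefect} involves $|\delta \mathbf{u}|$, which contains an increment of $w$, while the hypotheses only supply regularity for $u$ and $v$.

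First I handle the contributions not involving $\delta w$. Splitting $|\delta \mathbf{u}| \leq |\delta u| + |\delta v| + |\delta w|$ and decomposing $\xi = (\xi_h, \xi_z) \in \mathbb{R}^2 \times \mathbb{R}$, the identity $\delta_\xi u = \delta_{(\xi_h, 0)}[u(\cdot + (0, \xi_z))] + \delta_{(0, \xi_z)} u$ together with the anisotropic norm \eqref{anisotropicbesovnorm} yields $\lVert \delta_\xi u \rVert_{L^3(\mathbb{T}^3)} \lesssim |\xi_h|^\beta + |\xi_z|^\alpha$, and similarly for $v$. Consequently, by H\"older's inequality, the terms of the form $\int |\delta u|^3, \int |\delta v| |\delta u|^2$, etc.\ are bounded by $C(t)(|\xi|^{3\alpha} + |\xi|^{3\beta})$, which is $|\xi| \sigma(|\xi|)$ with $\sigma \to 0$ because $\alpha > \frac{1}{3}$.

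The harder step is to control $\int_{\mathbb{T}^3} |\delta w|(|\delta u|^2 + |\delta v|^2) \dx$. Here I would use the constraint \eqref{equationw} together with the symmetry condition $w|_{z=0}=0$ to represent $w(x,y,z,t) = -\int_0^z \nabla_H \cdot (u, v)(x, y, z', t)\, dz'$, and split $\delta_\xi w$ into horizontal and vertical pieces. The vertical piece $\delta_{(0, \xi_z)} w$ is a $z'$-integral of length $|\xi_z|$ of $\nabla_H \cdot (u, v)$; after mollifying $|\delta u|^2$, an integration by parts in $(x, y)$ transfers the horizontal derivative onto $|\delta u|^2$, producing a pairing of $(u, v)$ against $2\, \delta u \cdot \delta_\xi \nabla_H u$. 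Using the horizontal Besov regularity $\beta$ to control $\delta_\xi \nabla_H u$ in a negative Besov space and pairing against $(u, v)$ (which lies in the dual) yields an effective factor $|\xi_h|^{\beta - 1}$, while the $z$-integration contributes a factor $|\xi_z|$. Combining with $\lVert \delta u \rVert_{L^3}^2 \lesssim |\xi|^{2\alpha} + |\xi|^{2\beta}$ gives, in the worst case, a total of $|\xi|^{2\alpha + \beta}$, which is $o(|\xi|)$ precisely under the hypothesis $2\alpha + \beta > 2$. The horizontal increment $\delta_{(\xi_h, 0)} w$ is treated analogously, the balance there requiring $\beta > \frac{2}{3}$.

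The main obstacle is the rigorous justification of the horizontal integration by parts in the $\delta w$ term, since $|\delta u|^2$ is not classically differentiable and $\nabla_H \cdot (u, v)$ is only distributional in $(x, y)$. This forces one to carry out the argument at the level of mollified quantities and pass to the limit via dominated convergence, tracking that the resulting bounds are uniform in the mollification parameter. A cleaner alternative, which is the route adopted in Appendix \ref{commutatorappendix}, is to bypass Proposition \ref{zerodefectlemma} and work directly with the mollified momentum equations via the commutator estimates of \cite{constantin,titi2018}; the hydrostatic structure (\eqref{equationp} and \eqref{equationw}) is then used to reduce the commutator involving $w$ to one involving only the horizontal velocities, at the cost of a single horizontal derivative, which is exactly what the sharp condition $2\alpha + \beta > 2$ absorbs.
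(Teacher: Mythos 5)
Your overall skeleton (verify criterion \eqref{zerodefect}, apply Proposition \ref{zerodefectlemma}, split increments into horizontal and vertical pieces, and dispose of the terms involving only $\delta u,\delta v$ using $\alpha,\beta>\frac{1}{3}$) matches the paper, but your treatment of the term $\int_{\mathbb{T}^3}\lvert\delta w\rvert(\lvert\delta u\rvert^2+\lvert\delta v\rvert^2)\,dx$ has a genuine gap. First, criterion \eqref{zerodefect} is a bound on an integral of \emph{absolute values}; you cannot integrate by parts in $(x,y)$ to move the horizontal derivative off $\nabla_H\cdot\mathbf{u}_H$ while the factor $\lvert\delta w\rvert$ sits inside a modulus. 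To use such a duality/integration-by-parts argument you would have to abandon Proposition \ref{zerodefectlemma} entirely and work with the signed defect term $D_\epsilon(\mathbf{u})$, which is a different (and substantially longer) argument than the one you invoke. Second, your exponent bookkeeping does not close: you arrive at a worst-case contribution of order $\lvert\xi\rvert^{2\alpha+\beta}$ and assert this is $o(\lvert\xi\rvert)$ ``precisely under $2\alpha+\beta>2$,'' but $\lvert\xi\rvert^{2\alpha+\beta}=o(\lvert\xi\rvert)$ only requires $2\alpha+\beta>1$. The hypothesis $2\alpha+\beta>2$ must enter as an exponent $2\alpha+\beta-1>1$, i.e. the increment of $w$ costs a factor $\lvert\xi\rvert^{\beta-1}$ with no compensating extra power of $\lvert\xi_z\rvert$; your account mixes the gain from the $z'$-integral (which pertains only to the vertical increment of $w$) into the horizontal increment, so the scheme as written does not actually use the stated hypothesis correctly.

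You also miss the observation that makes the paper's proof short. One may assume $\beta>1$: if $\beta\le 1$ then $2\alpha>2-\beta\ge 1$, so $\alpha>\frac{1}{2}$ and the conclusion is already covered by Proposition \ref{exponentlemma}. Once $\beta>1$, equation \eqref{equationw} and Lemma \ref{primitivelemma} give $w\in L^3((0,T);B^{\alpha+1}_{3,\infty}(\mathbb{T};B^{\beta-1}_{3,\infty}(\mathbb{T}^2)))$ with \emph{positive} horizontal regularity $\beta-1>0$, and the $\delta w$ term is then estimated by plain H\"older against the anisotropic Besov seminorms, exactly as in Proposition \ref{holderlemma} --- no integration by parts, no negative Besov spaces, no mollification of $\lvert\delta u\rvert^2$. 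The critical product is $\lvert\delta w_h\rvert\,\lvert\delta u_z\rvert^2\sim\lvert\xi\rvert^{(\beta-1)+2\alpha}$, which is $o(\lvert\xi\rvert)$ exactly when $2\alpha+\beta>2$. Your closing remark that the commutator route of Appendix \ref{commutatorappendix} is a clean alternative is accurate, but as presented it is a pointer rather than a proof.
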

\begin{proof}
First we will restrict to the case $\beta > 1$, as otherwise $\alpha > \frac{1}{2}$ and then the result was proven already in Proposition \ref{exponentlemma}. By recalling equation \eqref{equationw}, we find that $\partial_z w \in L^3 ((0,T); B^{\alpha}_{3, \infty} (\mathbb{T}; B^{\beta - 1}_{3,\infty} (\mathbb{T}^2)))$. This then implies that $w \in L^3 ((0,T); B^{\alpha + 1}_{3, \infty} (\mathbb{T}; B^{\beta - 1}_{3,\infty} (\mathbb{T}^2)))$ (by using the Lebesgue differentation theorem). So $w$ has $B^{\alpha+1}_{3,\infty}$ regularity in the $z$-direction and $B^{\beta - 1}_{3,\infty}$ regularity in the horizontal directions. We take any $\xi = (\xi_h, \xi_z) \in \mathbb{R}^3$ with the horizontal increment $\xi_h \in \mathbb{R}^2$ and the vertical increment $\xi_z \in \mathbb{R}$. From this we obtain that
\begin{equation*}
\delta \textbf{u} (\xi ; x, t) = \delta \textbf{u} (\xi_z ; x + \xi_h, t) + \delta \textbf{u} (\xi_h ; x, t).
\end{equation*}
We introduce the notation $\delta \textbf{u}_h \coloneqq \delta \textbf{u} (\xi_h ; x, t)$ and $\delta \textbf{u}_z \coloneqq \delta \textbf{u} (\xi_z ; x + \xi_h, t)$. We only look at the `vertical' part of the defect term
\begin{equation*}
\int_{\mathbb{T}^3} \lvert \delta w (\xi;x,t) \rvert \bigg(\lvert \delta u (\xi; x,t) \rvert^2 + \lvert  \delta v (\xi; x,t) \rvert^2 \bigg) dx,
\end{equation*}
since the other part
\begin{equation*}
\int_{\mathbb{T}^3} (\lvert \delta u (\xi;x,t) \rvert + \lvert \delta v (\xi;x,t) \rvert ) \bigg(\lvert \delta u (\xi; x,t) \rvert^2 + \lvert  \delta v (\xi; x,t) \rvert^2 \bigg) dx
\end{equation*}
automatically satisfies the conditions of Proposition \ref{zerodefectlemma} if $\alpha, \beta > \frac{1}{3}$. For the `vertical' part of the defect term, we can derive the estimates
\begin{align*}
&\int_{\mathbb{T}^3} \lvert \delta w (\xi; x,t)  \rvert \bigg(\lvert \delta u (\xi; x,t) \rvert^2 + \lvert  \delta v (\xi; x,t) \rvert^2 \bigg) dx \\
&\leq 2 \int_{\mathbb{T}^3} (\lvert \delta w_h \rvert + \lvert \delta w_z \rvert) \bigg(\lvert \delta u_h  \rvert^2 + \lvert \delta u_z  \rvert^2 + \lvert  \delta v_h \rvert^2 + \lvert \delta v_z  \rvert^2 \bigg) dx \\
&\leq C (\lvert \xi \rvert^{\beta + 1} + \lvert \xi \rvert^{3} + \lvert \xi \rvert^{2 \alpha + \beta - 1} + \lvert \xi \rvert^{2 \alpha + 1} ) \lVert w \rVert_{B^{\alpha + 1 }_{3,\infty} (\mathbb{T} ; B^{\beta - 1}_{3,\infty} (\mathbb{T}^2))} \\
&\cdot\bigg(\lVert u \rVert_{B^{\alpha  }_{3,\infty} (\mathbb{T} ; B^{\beta }_{3,\infty} (\mathbb{T}^2))}^2 + \lVert v \rVert_{B^{\alpha  }_{3,\infty} (\mathbb{T} ; B^{\beta }_{3,\infty} (\mathbb{T}^2))}^2 \bigg).
\end{align*}
By assumption, we have that $\beta + 1 > 1$, $2 \alpha + \beta - 1 > 1$ and $2 \alpha + 1 > 1$. Moreover, we know that $C(t) = \lVert w \rVert_{B^{\alpha + 1 }_{3,\infty} (\mathbb{T} ; B^{\beta - 1}_{3,\infty} (\mathbb{T}^2))} (\lVert u \rVert_{B^{\alpha  }_{3,\infty} (\mathbb{T} ; B^{\beta }_{3,\infty} (\mathbb{T}^2))}^2 + \lVert v \rVert_{B^{\alpha  }_{3,\infty} (\mathbb{T} ; B^{\beta }_{3,\infty} (\mathbb{T}^2))}^2) \in L^1 (0,T)$. Therefore by Proposition \ref{zerodefectlemma} it follows that $D (\textbf{u}) = 0$. By a similar argument as in the proof of Proposition \ref{exponentlemma} we conclude that the solution conserves energy.
\end{proof}

Notice that this proposition is not implied by Proposition \ref{exponentlemma} in the range $\frac{1}{3} < \alpha < \frac{1}{2}$. As was mentioned before, if $\alpha > \frac{1}{2}$ then this proposition is merely a weaker result than Proposition \ref{exponentlemma}. We can conclude from Proposition \ref{directionlemma} that the Onsager exponents seem to be `direction-dependent', as can be expected from the fact that the regularity of the different components of the velocity field in the hydrostatic Euler equations is anisotropic. In other words, we can weaken the regularity requirement in the vertical direction at the expense of a stronger regularity requirement in the horizontal directions.

\section{Negative Besov regularity for the vertical velocity}
\label{veryweakappendix}

In the previous sections, in particular in Definition \ref{weaksolutiondefinition} of type I weak solutions, we assumed that $w \in L^2 ((0,T); L^2  (\mathbb{T}^3))$. It is difficult to find an equivalent regularity condition to this assumption solely in terms of $u$ and $v$. Implicity, $w \in L^2 ((0,T); L^2  (\mathbb{T}^3))$ implies some regularity of the horizontal divergence of $(u,v)$.

In order to weaken the assumptions on $w$, in this section we introduce two different notions of `very weak' solutions of the hydrostatic Euler equations. In this context, `very weak' means that either $w(\cdot, t) \in B^{-s}_{2,\infty} (\mathbb{T}^3)$ (type III weak solution) or $w (\cdot, z, t) \in B^{-s}_{2,\infty} (\mathbb{T}^2)$ (type II weak solution) for $s > 0$. Now because we need to make sense of the equation and in particular the products $u w$ and $v w$, we are obliged to make stronger regularity assumptions on $u$ and $v$ compared to Definition \ref{weaksolutiondefinition}. To this end, we will use techniques from paradifferential calculus. A brief overview of these techniques that will be used in this paper are presented in Appendix \ref{parareview}, for the convenience of the reader.

In this section, we will mainly focus on type III weak solutions. We will prove sufficient criteria for energy conservation for such solutions. The results will then be stated without proof for type II weak solutions, as they are identical in nature.

\subsection{Weak solutions with lower regularity on the vertical velocity}
In order to define the notion of a type III weak solution, we let $w$ be a functional on Besov functions. We need to prove that the products $u w$ and $ v w$ also have negative Besov regularity, for this we use the paraproduct estimates.

We will find that for type III weak solutions, the Onsager exponent seems to increase. We begin by introducing the notion of a type III weak solution.
\begin{definition} \label{veryweakdef}
We call a pair of a velocity field $\textbf{u} = (u,v,w) : \mathbb{T}^3 \times (0,T) \rightarrow \mathbb{R}^3$ and a pressure $p : \mathbb{T}^3 \times (0,T) \rightarrow \mathbb{R}$ a type III weak solution of the hydrostatic Euler equations with regularity parameter $s$ (where $s > 0$) if it satisfies the following conditions:
\begin{itemize}
    \item $w \in L^2 ((0,T); B^{-s}_{2,\infty} (\mathbb{T}^3))$, $u,v \in L^\infty ((0,T); L^2 (\mathbb{T}^3)) \cap L^{2} ((0,T); B^{\sigma'}_{2,\infty} (\mathbb{T}^3))$ for some $\sigma' > s$ and $p \in L^\infty ((0,T); L^1 (\mathbb{T}^3))$.
    \item For all $\phi_1, \phi_2 \in \mathcal{D} (\mathbb{T}^3 \times (0,T); \mathbb{R})$ the following equations hold
    \begingroup
    \allowdisplaybreaks
    \begin{align}
    &\int_0^T \int_{\mathbb{T}^3} u \partial_t \phi_1 d x \; dt + \int_0^T \langle u \textbf{u} , \nabla \phi_1 \rangle  dt + \int_0^T \int_{\mathbb{T}^3} \Omega v \phi_1 d x \; dt \label{veryweaku}  \\
    &+ \int_0^T \int_{\mathbb{T}^3} p \partial_x \phi_1 d x \; dt = 0, \nonumber \\
    &\int_0^T \int_{\mathbb{T}^3} v \partial_t \phi_2 d x \; dt + \int_0^T \langle v \textbf{u} , \nabla \phi_2 \rangle  dt - \int_0^T \int_{\mathbb{T}^3} \Omega u \phi_2 d x \; dt \label{veryweakv} \\
    &+ \int_0^T \int_{\mathbb{T}^3} p \partial_y \phi_2 d x \; dt = 0. \nonumber
    \end{align}
    \endgroup
    Note that here $\langle \cdot, \cdot \rangle$ denotes the spatial duality bracket between $\mathcal{D}' (\mathbb{T}^3)$ and $\mathcal{D} (\mathbb{T}^3)$.
    \item For all $\phi_3 \in \mathcal{D} (\mathbb{T}^3 \times (0,T) ; \mathbb{R})$ it holds that
    \begin{equation}
    \int_0^T \int_{\mathbb{T}^3} p \partial_z \phi_3 dx dt = 0.
    \end{equation}
    \item The velocity field $\mathbf{u}$ is divergence-free, i.e. for all $\phi_4 \in \mathcal{D} (\mathbb{T}^3 \times (0,T) ; \mathbb{R})$ we have that
    \begin{equation} \label{weakdivergencefree}
    \int_0^T \langle \textbf{u} , \nabla \phi_4 \rangle dt = 0.
    \end{equation}
    \item It must hold that for almost every $t \in (0,T)$ that
    \begin{equation} \label{typeIIIsymmetry}
    w(\cdot, 0,t) = w(\cdot, 1, t) = 0 \quad \text{in } B^{s-1}_{2,\infty} (\mathbb{T}^2).
    \end{equation}
\end{itemize}
\end{definition}
\begin{remark} \label{paraproductremark}
We first have to show that this definition makes sense, in particular we must be able to show that the products $u w (\cdot, t)$ and $v w (\cdot, t)$ (which appear in equations \eqref{veryweaku} and \eqref{veryweakv}) are distributions, i.e. are elements of the space $D' (\mathbb{T}^3)$. To this end, we use Bony's paradifferential calculus.
By using the regularities of $u,v$ and $w$ as stated in Definition \ref{veryweakdef} and applying Lemma~\ref{lemma:paradiff-summary} we get that $u w, v w \in L^1 ((0,T); B^{-s}_{1,\infty} (\mathbb{T}^3))$ and hence they are distributions.

In addition, we obtain that the duality brackets between $\mathcal{D}' (\mathbb{T}^3)$ and $\mathcal{D} (\mathbb{T}^3)$ are equivalent to the duality brackets between $B^{-s}_{1,\infty} (\mathbb{T}^3)$ and $B^s_{\infty,1} (\mathbb{T}^3)$, i.e.
\begin{equation*}
    \langle u \textbf{u} , \nabla \phi_1 \rangle = \langle u \textbf{u} , \nabla \phi_1 \rangle_{B^{-s}_{1,\infty} \times B^s_{\infty,1}}, \quad \langle v \textbf{u} , \nabla \phi_1 \rangle = \langle v \textbf{u} , \nabla \phi_1 \rangle_{B^{-s}_{1,\infty} \times B^s_{\infty,1}}.
\end{equation*}
In particular, we conclude that equations \eqref{equationu} and \eqref{equationv} not only hold in the sense of distributions, but in fact they hold in the space $W^{-1,1} ((0,T); B^{-s-1}_{1,\infty} (\mathbb{T}^3))$.
\end{remark}

The no-normal flow boundary condition in equation \eqref{typeIIIsymmetry} will be made sense of by using a trace theorem. In order to do so, we recall the following lemma from \cite[~Chapter 3, Lemma 1.1]{temambook} or \cite[~Lemma 1.31]{robinsonbook}.
\begin{lemma} \label{primitivelemma}
Let $X$ be a Banach space. Assume that $f,g \in L^1 ((a,b);X)$. Then the following two statements are equivalent:
\begin{enumerate}
    \item The function $f$ is equal almost everywhere to the primitive function of $g$, i.e.
    \begin{equation*}
    f (z) = \xi + \int_0^z g(z') dz', \quad \xi \in X, \text{ for almost all } z \in [a,b].
    \end{equation*}
    \item For all test functions $\phi_5 \in \mathcal{D} (a,b)$ it holds that
    \begin{equation*}
    \int_a^b f(z') \phi_5' (z') dz' = - \int_a^b g(z') \phi_5 (z') dz'.
    \end{equation*}
\end{enumerate}
In particular, both statements imply that $f$ is almost everywhere equal to a continuous function from $[a,b]$ to $X$.
\end{lemma}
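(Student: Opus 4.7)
The plan is to prove the two implications separately and then deduce the continuity claim as an automatic consequence of the integral representation.

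For $(1) \Rightarrow (2)$, I would substitute the primitive representation $f(z) = \xi + \int_0^z g(z')\,dz'$ into the left-hand side of (2). Because $\phi_5 \in \mathcal{D}(a,b)$ has compact support strictly inside $(a,b)$, the boundary terms vanish and the constant $\xi$ contributes nothing. For the remaining double integral, I would use Fubini's theorem, which is justified since $g \in L^1((a,b);X)$ and $\phi_5' \in L^\infty$. After swapping the order of integration, a standard one-variable calculation in the real variable yields $-\int_a^b g(z')\phi_5(z')\,dz'$.

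For $(2) \Rightarrow (1)$, I would introduce the canonical primitive $F(z) := \int_0^z g(z')\,dz'$, which lies in $C([a,b];X)$ by the absolute continuity of the Bochner integral. The already-proven direction $(1) \Rightarrow (2)$ applied to $F$ (with constant $\xi = 0$) shows that $F$ satisfies the same distributional identity as $f$. Subtracting, one obtains
\begin{equation*}
\int_a^b (f - F)(z')\,\phi_5'(z')\,dz' = 0 \qquad \text{for every } \phi_5 \in \mathcal{D}(a,b).
\end{equation*}
The remaining task is to upgrade this to the assertion that $f - F$ is $X$-valued constant almost everywhere. I would do this via the standard Bochner-space du Bois-Reymond argument: fix $\phi_0 \in \mathcal{D}(a,b)$ with $\int \phi_0 = 1$, and for an arbitrary $\phi \in \mathcal{D}(a,b)$ note that $\phi - (\int\phi)\phi_0$ has zero mean, hence equals $\psi'$ for some $\psi \in \mathcal{D}(a,b)$. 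Plugging $\psi$ into the vanishing identity and rearranging gives
\begin{equation*}
\int_a^b (f - F)(z')\,\phi(z')\,dz' = \left(\int_a^b \phi\right)\xi, \qquad \xi := \int_a^b (f-F)(z')\,\phi_0(z')\,dz' \in X.
\end{equation*}
Since this holds for every $\phi \in \mathcal{D}(a,b)$, a Bochner variant of the fundamental lemma of the calculus of variations yields $f - F = \xi$ almost everywhere, which is precisely (1).

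The main obstacle is the final constancy step in the Banach-valued setting: unlike the scalar case, one cannot simply appeal to a pointwise argument, and one must be careful to produce the constant $\xi$ as an element of $X$ rather than of some larger space. The trick with the fixed mean-one test function $\phi_0$ is what allows this, because it projects onto a one-dimensional complement of the space of derivatives inside $\mathcal{D}(a,b)$ while keeping all integrals of $X$-valued functions well-defined. Once (1) is established, the continuity assertion is immediate, since $z \mapsto \xi + \int_0^z g(z')\,dz'$ is (absolutely) continuous from $[a,b]$ to $X$ by the dominated convergence theorem applied to the Bochner integral.
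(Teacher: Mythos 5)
Your proof is correct and is the standard argument: the paper itself gives no proof of this lemma but cites it from Temam's and Robinson's books, and the argument there is exactly yours (reduce to $\int_a^b (f-F)\phi_5'\,dz'=0$ via the easy direction and Fubini, then apply the du Bois--Reymond / fundamental lemma in the Bochner setting with a fixed mean-one test function to extract the constant $\xi\in X$). Nothing further is needed.
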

By using this lemma we show that the boundary condition in \eqref{typeIIIsymmetry} is well-defined.
\begin{remark} \label{bcremark}
Firstly, by using Lemma \ref{primitivelemma} with $f = w$, $g = - \nabla_H \cdot \mathbf{u}_H$, $X = B^{s-1}_{2,\infty} (\mathbb{T}^2)$, $[a,b] = [0,1]$ and keeping $t$ fixed, we get that $w (\cdot, z, t)$ is continuous as a function of the $z$-variable (since $B^{s-1}_{2,\infty} (\mathbb{T}^2)$ is a Banach space). This subsequently means that the no-normal boundary condition \eqref{nonormalbc} for the vertical velocity is obeyed as an equation in the space $B^{s-1}_{2,\infty} (\mathbb{T}^2)$.  In particular, we are able to conclude from this that we can extend a solution on the torus to a solution in the channel.
\end{remark}
\begin{remark} \label{paraproductremark2}
To establish an equation of local energy balance, we will need to make sense of the products $u^2 w $, $v^2 w $ and $p w$ as distributions.
To this end we have to require in addition that $w\in L^3((0,T);B_{3,\infty}^{-s}(\mathbb{T}^3))$ and $u,v\in L^3((0,T);B_{3,\infty}^{\sigma'}(\mathbb{T}^3))$ for $\sigma' > s$. Again by Bony's paradifferential calculus (specifically Lemma \ref{lemma:paradiff-summary}) we find that $u^2,v^2\in L^{3/2}((0,T);B^{\sigma'}_{3/2,\infty}(\mathbb{T}^3))$. Another application of Lemma~\ref{lemma:paradiff-summary} then yields that $u^2 w, v^2 w \in L^1 ((0,T); B^{-s}_{1,\infty} (\mathbb{T}^3))$, thus they belong to $\mathcal{D}' (\mathbb{T}^3 \times (0,T); \mathbb{R})$. As a result, the action on $\mathcal{D} (\mathbb{T}^3 \times (0,T))$ makes sense and the equation of local energy balance will hold in the sense of distributions.

Regarding the pressure, we observe from equations \eqref{equationp} and \eqref{pressureeq} by using elliptic regularity results that $p \in L^{3/2} ((0,T); W^{\sigma' - \epsilon,3/2} (\mathbb{T}^3))$ (because $u^2, v^2, uv \in L^{3/2} ((0,T); B^{\sigma'}_{3/2,\infty} (\mathbb{T}^3))$, as was mentioned before), where $\epsilon > 0$ can be chosen sufficiently small such that $\sigma' - \epsilon > s$. Therefore the pressure is more regular than $L^\infty ((0,T); L^1 (\mathbb{T}^3))$, as was assumed in the definition of a type III weak solution. Then by proceeding as before we can show that $p \textbf{u} \in L^1 ((0,T); B^{-s}_{1,\infty} (\mathbb{T}^3))$, which will be needed to establish the equation of local energy balance.
\end{remark}
\begin{remark} \label{paraproductremark3}
Note that in the case $s > \frac{1}{2}$ the regularity requirement $w(\cdot, t) \in B^{-s}_{2,\infty} (\mathbb{T}^3)$ already follows from the regularity requirement on $u$ and $v$. Indeed from $u (\cdot, t),v (\cdot, t) \in B^{\sigma'}_{2,\infty} (\mathbb{T}^3)$ (for some $\sigma' > s$) and equation \eqref{weakdivergencefree} we deduce that $\partial_z w (\cdot, t) \in L^2 (\mathbb{T}; B^{\sigma'-1}_{2,\infty} (\mathbb{T}^2))$. Then we find that $w (\cdot, t) \in L^2 (\mathbb{T} ;B^{\sigma'-1}_{2,\infty} (\mathbb{T}^2))$. This can be seen by applying Lemma \ref{primitivelemma} and using the no-normal flow boundary condition imposed in Definition \ref{veryweakdef}.

Then we observe that $w (\cdot, t) \in B^{\sigma'-1}_{2,\infty} (\mathbb{T}^3) \subset B^{-s}_{2,\infty} (\mathbb{T}^3)$. The inclusion holds because $\sigma'-1>s-1>-\frac{1}{2}>-s$. Equivalently, if $\mathbf{u}$ is a type III weak solution with regularity parameter $s \in (\frac{1}{2},1)$, then $\mathbf{u}$ is in fact a type III weak solution with smaller regularity parameter $1-s$.

Naturally, if $s \geq 1$ then $\partial_z w (\cdot, t)$ will lie in $L^2 (\mathbb{T}^3)$. Then by applying Lemma \ref{primitivelemma} we find that $w (\cdot, t) \in L^2 (\mathbb{T}^3)$, which means that we are considering a type I weak solution.

\end{remark}
Now we show that we can expand the space of test functions, as it was done in Lemma \ref{gentestfunctions}.
\begin{lemma} \label{genweaktestfunctions}
Let $\mathbf{u}$ be a type III weak solution of the hydrostatic Euler equations with regularity parameter $s$, such that $w\in L^3((0,T);B_{3,\infty}^{-s}(\mathbb{T}^3))$ and $u,v\in L^3((0,T);B_{3,\infty}^{\sigma'}(\mathbb{T}^3))$. Then the weak formulation in equations \eqref{veryweaku} and \eqref{veryweakv} still holds for test functions in $\phi_1, \phi_2 \in W^{1,1}_0 ((0,T); L^2 (\mathbb{T}^3)) \cap L^{3} ((0,T); H^{3 + s} (\mathbb{T}^3)) $.
\end{lemma}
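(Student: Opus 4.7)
The plan is to follow the template of the proof of Lemma \ref{gentestfunctions}: given $\varphi$ in the intersection space $W^{1,1}_0((0,T); L^2(\mathbb{T}^3)) \cap L^3((0,T); H^{3+s}(\mathbb{T}^3))$, approximate it by a sequence $\{\varphi_n\}_{n=1}^\infty \subset \mathcal{D}(\mathbb{T}^3 \times (0,T))$ with $\varphi_n \to \varphi$ in this intersection topology, and then pass to the limit in every term of the weak formulations \eqref{veryweaku} and \eqref{veryweakv}. Such a sequence is produced by standard space-time mollification combined with a smooth cutoff near the endpoints of $(0,T)$. What is new compared to the proof of Lemma \ref{gentestfunctions} is that the nonlinear terms are now spatial Besov duality pairings rather than $L^1_{t,x}$ integrals, so the convergence must be carried out in the appropriate dual pairing.

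For the time-derivative term and the Coriolis term, convergence is handled verbatim as in Lemma \ref{gentestfunctions}, using $u, v \in L^\infty((0,T); L^2(\mathbb{T}^3))$ together with $\partial_t \varphi_n \to \partial_t \varphi$ in $L^1((0,T); L^2(\mathbb{T}^3))$ and $\varphi_n \to \varphi$ in $L^1((0,T); L^2(\mathbb{T}^3))$. For the pressure term, Remark \ref{paraproductremark2} provides $p \in L^{3/2}((0,T); W^{\sigma'-\epsilon,3/2}(\mathbb{T}^3))$; combined with the Sobolev embedding $H^{3+s}(\mathbb{T}^3) \hookrightarrow L^\infty(\mathbb{T}^3)$ and H\"older's inequality in time, this yields $p \partial_x \varphi_n \to p \partial_x \varphi$ in $L^1((0,T); L^1(\mathbb{T}^3))$.

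The genuinely new step is the passage to the limit in the nonlinear bracket $\int_0^T \langle u \mathbf{u}, \nabla \varphi_n\rangle\, dt$, which is to be interpreted as the time integral of a spatial Besov duality pairing. Under the enhanced assumptions of the lemma, Lemma \ref{lemma:paradiff-summary} together with the reasoning of Remark \ref{paraproductremark2} yields that each component of $u \mathbf{u}$ (and likewise of $v \mathbf{u}$) belongs to $L^{3/2}((0,T); B^{-s}_{3/2,\infty}(\mathbb{T}^3))$. The spatial duality between $B^{-s}_{3/2,\infty}(\mathbb{T}^3)$ and $B^s_{3,1}(\mathbb{T}^3)$, combined with H\"older's inequality in time, gives the bound
\begin{equation*}
\left| \int_0^T \langle u \mathbf{u}, \nabla(\varphi_n - \varphi)\rangle\, dt \right| \leq C \, \|u \mathbf{u}\|_{L^{3/2}_t B^{-s}_{3/2,\infty}} \, \|\nabla \varphi_n - \nabla \varphi\|_{L^3_t B^s_{3,1}}.
\end{equation*}
The Besov embedding $H^{2+s}(\mathbb{T}^3) = B^{2+s}_{2,2}(\mathbb{T}^3) \hookrightarrow B^s_{3,1}(\mathbb{T}^3)$ holds because the scaling condition $(2+s) - \tfrac{3}{2} > s - 1$ is satisfied with strict inequality, so $\nabla \varphi_n \to \nabla \varphi$ in $L^3((0,T); B^s_{3,1}(\mathbb{T}^3))$ and the right-hand side vanishes as $n \to \infty$. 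Finally, since the paraproduct estimates and inclusions imply $B^{-s}_{3/2,\infty} \subset B^{-s}_{1,\infty}$, the new pairing agrees with the one in Definition \ref{veryweakdef} whenever $\varphi_n \in \mathcal{D}$.

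The main technical point is simply bookkeeping: one must thread together the H\"older conjugate pair $(3/2, 3)$ for the spatial integrability, the summation-exponent pair $(\infty, 1)$ required for the Besov duality, the time-H\"older pair $(3/2, 3)$, and the Sobolev embedding of $H^{2+s}$ into $B^s_{3,1}$, so that the paraproduct estimates of Lemma \ref{lemma:paradiff-summary} produce a distribution whose pairing with $\nabla \varphi$ is well-defined and passes to the limit under the chosen mode of approximation. Once that web of indices is set up, the convergence of each individual term is routine, and the same argument applied to \eqref{veryweakv} gives the analogous statement by symmetry.
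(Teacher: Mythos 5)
Your proposal is correct and follows essentially the same route as the paper: approximate by smooth test functions, reduce to the nonlinear bracket, place $u\mathbf{u}$ (and $v\mathbf{u}$) in $L^{3/2}_t$ of a negative Besov space via Lemma \ref{lemma:paradiff-summary}, and close with H\"older in time plus a Sobolev--Besov embedding of $H^{2+s}(\mathbb{T}^3)$ into the predual. The only (immaterial) difference is your choice of the duality pair $B^{-s}_{3/2,\infty}\times B^{s}_{3,1}$ where the paper uses $B^{-s}_{1,\infty}\times B^{s}_{\infty,1}$; both pairings are valid and both embeddings of $H^{2+s}(\mathbb{T}^3)$ hold with room to spare.
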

\begin{proof}
The proof works the same way as the proof for Lemma \ref{gentestfunctions}. We take an arbitrary $\varphi \in W^{1,1}_0 ((0,T); L^2 (\mathbb{T}^3)) \cap L^{3} ((0,T); H^{3 + s} (\mathbb{T}^3))$. Then we know that there exists a sequence $\{ \varphi_n \}_{n=1}^\infty \subset \mathcal{D} (\mathbb{T}^3 \times (0,T))$ such that $\varphi_n \rightarrow \varphi$ in $W^{1,1}_0 ((0,T); L^2 (\mathbb{T}^3)) \cap L^{3} ((0,T); H^{3 + s} (\mathbb{T}^3))$ as $n \rightarrow \infty$.

The only convergence that we need to show that differs from the proof of Lemma \ref{gentestfunctions} is for the integral $\int_0^T \langle u \textbf{u} , \nabla \varphi_n \rangle_{B^{-s}_{1,\infty} \times B^s_{\infty,1} } dt$ (and the same integral for $v$). We observe that $\nabla \varphi_n \xrightarrow[]{n \rightarrow \infty} \nabla \varphi$ in $L^3 ((0,T); H^{2 + s} (\mathbb{T}^3)) \subset L^{3} ((0,T); B^s_{\infty,1} (\mathbb{T}^3))$. This means that we can prove the convergence
\begin{align*}
&\bigg\lvert \int_0^T \langle u \textbf{u} , \nabla \varphi_n \rangle_{B^{-s}_{1,\infty} \times B^s_{\infty,1} } dt - \int_0^T \langle u \textbf{u} , \nabla \varphi \rangle_{B^{-s}_{1,\infty} \times B^s_{\infty,1} } dt \bigg\rvert \\
&= \bigg\lvert \int_0^T \langle u \textbf{u} , \nabla \varphi - \nabla \varphi_n \rangle_{B^{-s}_{1,\infty} \times B^s_{\infty,1} } dt \bigg\rvert \\
&\leq \int_0^T \big( \lVert u \textbf{u} \rVert_{B^{-s}_{1,\infty}} \lVert \nabla \varphi - \nabla \varphi_n \rVert_{B^{s}_{\infty,1}} \big) dt \leq \lVert u \textbf{u} \rVert_{L^{3/2}_t ((B^{-s}_{1,\infty})_x)} \lVert  \varphi -  \varphi_n \rVert_{L^{3}_t ((B^{s+1}_{\infty,1})_x)} \\
&\xrightarrow[]{n \rightarrow \infty} 0.
\end{align*}
Here we have used that $u \mathbf{u} \in L^{3/2} ( (0,T); B^{-s}_{1,\infty} (\mathbb{T}^3))$ which follows from the regularity assumptions on $u,v,w$ and Lemma~\ref{lemma:paradiff-summary}. Therefore the final inequality follows, which concludes the proof.
\end{proof}

\subsection{The equation of local energy balance}
Now that we have introduced the notion of a type III weak solution and extended the space of test functions, we can establish an equation of local energy balance.
\begin{theorem} \label{weakenergyequation}
Let $\mathbf{u}$ be a type III weak solution of the hydrostatic Euler equations with regularity parameter $s$, such that $w\in L^3((0,T);B_{3,\infty}^{-s}(\mathbb{T}^3))$ and $u,v\in L^3((0,T);B_{3,\infty}^{\sigma'}(\mathbb{T}^3))$. Then for all $\psi \in \mathcal{D} (\mathbb{T}^3 \times (0,T))$ the following equation of local energy balance is satisfied
\begin{align}
&\int_0^T \bigg[ \int_{\mathbb{T}^3} \bigg( u^2 \partial_t \psi + v^2 \partial_t \psi  \bigg) dx + \bigg\langle  (u^2 + v^2 + 2p) \mathbf{u}, \nabla \psi \bigg\rangle_{B^{-s}_{1,\infty} \times B^s_{\infty,1}} \nonumber \\
&- \frac{1}{2} \bigg\langle  D(\mathbf{u}), \psi \bigg\rangle_{B^{-s}_{1,\infty} \times B^s_{\infty,1} }  \bigg] dt = 0. \label{energybalance}
\end{align}
Here $D (\mathbf{u})$ denotes the element in $W^{-1,1} ((0,T); B^{-s-1}_{1,\infty} (\mathbb{T}^3))$ given by
\begin{equation*}
D (\mathbf{u}) \coloneqq \lim_{\epsilon \rightarrow 0} \int_{\mathbb{R}^3} d \xi \bigg[ \nabla \varphi_\epsilon (\xi) \cdot \delta \mathbf{u} (\xi ; x, t) (\lvert\delta u (\xi ; x, t) \rvert^2 + \lvert\delta v (\xi ; x, t) \rvert^2) \bigg].
\end{equation*}
The limit $D (\mathbf{u})$ is independent of the choice of mollifier.
\end{theorem}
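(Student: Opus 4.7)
The plan is to mimic the proof of Theorem \ref{energyequationtheorem}, but carry out every product estimate and every convergence statement at the level of Bony's paradifferential calculus, so that the diminished regularity of $w$ (and hence of $uw$, $vw$, $u^2w$, $v^2w$, $p\mathbf{u}$) is compensated by the extra Besov regularity of $u,v$.

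First I would mollify equations \eqref{veryweaku}–\eqref{veryweakv} in space against $\varphi_\epsilon$. Since the mollified terms $u^\epsilon, v^\epsilon$ are smooth in space and, by the equations themselves, weakly differentiable in time, one checks that $\psi u^\epsilon$ and $\psi v^\epsilon$ lie in $W^{1,1}_0((0,T);L^2)\cap L^3((0,T);H^{3+s})$, so Lemma \ref{genweaktestfunctions} lets us plug them into the weak formulation. Multiplying the mollified equations by $\psi u$ and $\psi v$ respectively (this is legitimate since the mollified equations hold pointwise a.e.) and subtracting from the tested weak formulation, the time derivative terms collapse via Leibniz to $\langle -\partial_t(uu^\epsilon+vv^\epsilon),\psi\rangle$. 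The pressure terms are re-arranged exactly as in Theorem \ref{energyequationtheorem}, using $\partial_z p=0$ together with equation \eqref{weakdivergencefree} and the regularity $p\in L^{3/2}((0,T);W^{\sigma'-\epsilon,3/2})$ from Remark \ref{paraproductremark2}; the duality is then read in $B^{-s}_{1,\infty}\times B^{s}_{\infty,1}$, which makes sense because $p\mathbf{u}\in L^1((0,T);B^{-s}_{1,\infty})$ by Lemma \ref{lemma:paradiff-summary}. The advective terms are reshuffled exactly as before by introducing
\[
D_\epsilon(\mathbf{u}) \;=\; \int_{\mathbb{R}^3} \nabla\varphi_\epsilon(\xi)\cdot\delta\mathbf{u}(\xi;x,t)\bigl(|\delta u|^2+|\delta v|^2\bigr)\,d\xi,
\]
together with the commutator $\bigl((u^2+v^2)\mathbf{u}\bigr)^\epsilon-(u^2+v^2)^\epsilon\mathbf{u}$, leading to the $\epsilon$-analogue of \eqref{mollifiedenergyeq} but now interpreted in $W^{-1,1}((0,T);B^{-s-1}_{1,\infty})$.

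The core estimates are then the convergence statements as $\epsilon\to 0$. For $uu^\epsilon+vv^\epsilon\to u^2+v^2$ the argument uses $u,v\in L^\infty_t L^2_x\cap L^3_t B^{\sigma'}_{3,\infty,x}$, so by paraproduct estimates and the fact that mollification is a contraction on Besov spaces, $u^2+v^2\in L^{3/2}((0,T);B^{\sigma'}_{3/2,\infty})$ and the convergence holds in $L^{3/2}_t L^{3/2}_x$, hence in particular as distributions. For the pressure-flux $p\mathbf{u}^\epsilon+p^\epsilon\mathbf{u}\to 2p\mathbf{u}$, the mollified horizontal components converge strongly in $L^3_tL^3_x$ and $p^\epsilon\to p$ in $L^{3/2}_t W^{\sigma'-\epsilon,3/2}_x$, while the bracket $p^\epsilon\mathbf{u}\to p\mathbf{u}$ in $L^1_t B^{-s}_{1,\infty,x}$ is handled by writing the paraproducts and using that mollifiers commute with Littlewood-Paley blocks up to admissible error. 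The tricky commutator $((u^2+v^2)\mathbf{u})^\epsilon-(u^2+v^2)^\epsilon\mathbf{u}\to 0$ in $L^1_t B^{-s}_{1,\infty,x}$ follows by the same kind of mollification-commutator argument applied factor-by-factor.

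The main obstacle will be the last convergence, namely isolating $D(\mathbf{u})$ as the $W^{-1,1}((0,T);B^{-s-1}_{1,\infty})$-limit of $D_\epsilon(\mathbf{u})$ independently of the mollifier. The trick — just as in Theorem \ref{energyequationtheorem} — is not to estimate $D_\epsilon$ directly, but rather to read off its limit from \eqref{mollifiedenergyeq}: once all the other terms are shown to converge in $W^{-1,1}_t B^{-s-1}_{1,\infty,x}$, $D_\epsilon$ must converge too, and to a limit independent of $\varphi$. I would then observe that passing $\epsilon\to 0$ in the distributional identity against $\psi\in\mathcal{D}(\mathbb{T}^3\times(0,T))$ yields \eqref{energybalance}, with every duality bracket making sense in $B^{-s}_{1,\infty}\times B^{s}_{\infty,1}$ by the paraproduct bounds of Remarks \ref{paraproductremark}--\ref{paraproductremark2}. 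The only genuinely new ingredient compared to the type I argument is the systematic use of Lemma \ref{lemma:paradiff-summary} to justify each multiplication and each mollifier-commutator estimate in the appropriate negative-regularity Besov class.
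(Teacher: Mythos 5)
Your proposal follows essentially the same route as the paper's proof: mollify, test with $\psi u^\epsilon$ and $\psi v^\epsilon$ via Lemma \ref{genweaktestfunctions}, subtract the mollified equations multiplied by $\psi u$ and $\psi v$, introduce $D_\epsilon(\mathbf{u})$ together with the commutator $((u^2+v^2)\mathbf{u})^\epsilon-(u^2+v^2)^\epsilon\mathbf{u}$, control every product with Lemma \ref{lemma:paradiff-summary}, and read off the $W^{-1,1}((0,T);B^{-s-1}_{1,\infty})$-limit of $D_\epsilon$ from the remaining terms of the identity rather than estimating it directly. The only detail you leave implicit, which the paper spells out via an $\eta$-mollification and Fubini argument (equation \eqref{integralclaim}), is the verification that $\langle D_\epsilon(\mathbf{u}),\psi\rangle_{B^{-s}_{1,\infty}\times B^{s}_{\infty,1}}$ can indeed be computed as the $\xi$-integral of the dualities, so that $D_\epsilon(\mathbf{u})$ is a well-defined element of $L^1((0,T);B^{-s}_{1,\infty}(\mathbb{T}^3))$; this is a routine approximation step and does not affect the soundness of your plan.
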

\begin{remark}
Notice that in particular, since $L^\infty ((0,T); B^{s+1} (\mathbb{T}^3)) \subset \mathcal{D} (\mathbb{T}^3 \times (0,T); \mathbb{R})$, equation \eqref{energybalance} holds in the sense of distributions (the duality brackets in this equation are equivalent to the distributional action between $\mathcal{D}' (\mathbb{T}^3 \times (0,T))$ and $\mathcal{D} (\mathbb{T}^3 \times (0,T))$). However, we show in fact that the equation holds in the space $W^{-1,1} ((0,T); B^{-s-1}_{1,\infty} (\mathbb{T}^3))$.
\end{remark}
\begin{proof}
Throughout the proof we will write duality brackets with explicit subscripts to make clear in which spaces the elements lie as to make the proof more understandable. Any duality bracket in this proof is equivalent to a distributional duality bracket, as $\psi$ is a test function.

We first mollify equations \eqref{equationu} and \eqref{equationv} in space with $\varphi_\epsilon$ and obtain
\begin{align}
0 &= \partial_t u^\epsilon + \nabla \cdot ( u \textbf{u})^\epsilon - \Omega v^\epsilon + \partial_x p^\epsilon, \label{mollifiedu2} \\
0 &= \partial_t v^\epsilon + \nabla \cdot (v \textbf{u} )^\epsilon + \Omega u^\epsilon + \partial_y p^\epsilon. \label{mollifiedv2}
\end{align}
It holds that $u^\epsilon, v^\epsilon \in L^\infty ((0,T); C^\infty (\mathbb{T}^3))$. We observe that $( u \textbf{u})^\epsilon,  (v \textbf{u})^\epsilon \in L^1 ((0,T); C^\infty (\mathbb{T}^3))$ and $p^\epsilon \in L^\infty ((0,T); C^\infty (\mathbb{T}^3))$, from this we can conclude that
$\partial_t u^\epsilon, \partial_t v^\epsilon \in L^1 ((0,T) ; C^\infty (\mathbb{T}^3))$, which implies that $u^\epsilon, v^\epsilon \in W^{1,1} ((0,T) ; C^\infty (\mathbb{T}^3)) \cap L^\infty ((0,T); C^\infty (\mathbb{T}^3))$.

As a result, we are able to apply Lemma \ref{genweaktestfunctions} and use $\psi u^\epsilon$ and $\psi v^\epsilon $ as test functions in the weak formulation in Definition \ref{veryweakdef} (for $\psi \in \mathcal{D} (\mathbb{T}^3 \times (0,T); \mathbb{R})$). Adding the equations for $u$ and $v$ together gives that
\begin{align*}
&\int_0^T \bigg[ \int_{\mathbb{T}^3}  \bigg( u \partial_t (u^\epsilon \psi) + v \partial_t (v^\epsilon \psi) + \Omega v u^\epsilon \psi - \Omega u v^\epsilon \psi + p \partial_x (u^\epsilon \psi) + p \partial_y (v^\epsilon \psi)\bigg) dx \\
&+ \langle u \textbf{u} , \nabla (u^\epsilon \psi) \rangle_{B^{-s}_{1,\infty} \times B^s_{\infty,1} } + \langle v \textbf{u} , \nabla (v^\epsilon \psi) \rangle_{B^{-s}_{1,\infty} \times B^s_{\infty,1} } \bigg] dt = 0.
\end{align*}
By multiplying equations \eqref{mollifiedu2} and \eqref{mollifiedv2} by $u \psi$ respectively $v \psi$ and subtracting them from the previous equation, we get that
\begingroup
\allowdisplaybreaks
\begin{align*}
&\int_0^T \bigg[ \int_{\mathbb{T}^3} \bigg( u \partial_t (u^\epsilon \psi) - u \psi \partial_t u^\epsilon + v \partial_t (v^\epsilon \psi) - v \psi \partial_t v^\epsilon + p \partial_x (u^\epsilon \psi) - u \psi \partial_x p^\epsilon  \\
&+ p \partial_y (v^\epsilon \psi) - v \psi \partial_y p^\epsilon - \psi u \nabla \cdot (u \textbf{u})^\epsilon - \psi v \nabla \cdot (v \textbf{u})^\epsilon \bigg) dx + \langle u \textbf{u} , \nabla (u^\epsilon \psi) \rangle_{B^{-s}_{1,\infty} \times B^s_{\infty,1} }  \\
&+ \langle v \textbf{u} , \nabla (v^\epsilon \psi) \rangle_{B^{-s}_{1,\infty} \times B^s_{\infty,1} }  \bigg] dt  \\
&= \int_0^T \bigg[ \int_{\mathbb{T}^3} \bigg( u \partial_t (u^\epsilon \psi) - u \psi \partial_t u^\epsilon + v \partial_t (v^\epsilon \psi) - v \psi \partial_t v^\epsilon + p \partial_x (u^\epsilon \psi) - u \psi \partial_x p^\epsilon  \\
&+ p \partial_y (v^\epsilon \psi) - v \psi \partial_y p^\epsilon \bigg) dx + \bigg\langle u \textbf{u} \cdot \nabla u^\epsilon - u \nabla \cdot (u \textbf{u})^\epsilon + v \textbf{u} \cdot \nabla v^\epsilon - v \nabla \cdot (v \textbf{u})^\epsilon, \psi \bigg\rangle_{B^{-s}_{1,\infty} \times B^s_{\infty,1} }  \\
&+ \bigg\langle  (u u^\epsilon + v v^\epsilon) \textbf{u}, \nabla \psi \bigg\rangle_{B^{-s}_{1,\infty} \times B^s_{\infty,1}} \bigg] dt = 0.
\end{align*}
\endgroup
We will mainly discuss the advective terms here, as the derivation of the equation of energy for the other terms proceeds in the same way as Theorem \ref{energyequationtheorem}.  To handle them, we introduce a defect term
\begingroup
\allowdisplaybreaks
\begin{align*}
D_\epsilon (\textbf{u}) &\coloneqq \int_{\mathbb{R}^3} d \xi \bigg[ \nabla \varphi_\epsilon (\xi) \cdot \delta \textbf{u} (\xi ; x, t) (\lvert\delta u (\xi ; x, t) \rvert^2 + \lvert\delta v (\xi ; x, t) \rvert^2) \bigg] \\
&= - \nabla \cdot \bigg[ (u^2 + v^2) \textbf{u} \bigg]^\epsilon +  \textbf{u} \cdot \nabla (u^2 + v^2)^\epsilon + 2 u \nabla \cdot ( u \textbf{u})^\epsilon + 2 v \nabla \cdot (v \textbf{u})^\epsilon \\
&- 2 u \textbf{u} \cdot \nabla u^\epsilon - 2 v \textbf{u} \cdot \nabla v^\epsilon.
\end{align*}
\endgroup
Notice that the equation holds in $L^1 ((0,T); B^{-s}_{1,\infty} (\mathbb{T}^3))$, in particular in $\mathcal{D}' (\mathbb{T}^3 \times (0,T); \mathbb{R}) $.

Now we can write the advective terms as follows
\begingroup
\allowdisplaybreaks
\begin{align*}
&\int_0^T \bigg\langle u \textbf{u} \cdot \nabla u^\epsilon - u \nabla \cdot (u \textbf{u})^\epsilon + v \textbf{u} \cdot \nabla v^\epsilon - v \nabla \cdot (v \textbf{u})^\epsilon, \psi \bigg\rangle_{B^{-s}_{1,\infty} \times B^s_{\infty,1}} dt \\
&= \int_0^T \bigg\langle - \frac{1}{2 } D_\epsilon (\textbf{u}) - \nabla \cdot \bigg[ (u^2 + v^2) \textbf{u} \bigg]^\epsilon + \textbf{u} \cdot \nabla (u^2 + v^2)^\epsilon, \psi \bigg\rangle_{B^{-s}_{1,\infty} \times B^s_{\infty,1}} dt \\
&= \int_0^T \bigg\langle - \frac{1}{2 } D_\epsilon (\textbf{u}) , \psi \bigg\rangle_{B^{-s}_{1,\infty} \times B^s_{\infty,1}} + \bigg\langle \bigg[ (u^2 + v^2) \textbf{u} \bigg]^\epsilon - \textbf{u}  (u^2 + v^2)^\epsilon, \nabla \psi \bigg\rangle_{B^{-s}_{1,\infty} \times B^s_{\infty,1}} dt.
\end{align*}
\endgroup
We observe that $\bigg[ (u^2 + v^2) \textbf{u} \bigg]^\epsilon - \textbf{u}  (u^2 + v^2)^\epsilon \rightarrow 0$ in $B^{-s}_{1,\infty} (\mathbb{T}^3)$ by the paraproduct estimates (see Remark \ref{paraproductremark2} and Appendix \ref{parareview} for more details). In particular, we have that (again for some $\sigma' > s$, by the assumptions on $u,v,w$ and Lemma~\ref{lemma:paradiff-summary})
\begingroup
\allowdisplaybreaks
\begin{align*}
&\bigg\lVert \bigg[ (u^2 + v^2) \textbf{u} \bigg]^\epsilon - \textbf{u}  (u^2 + v^2)^\epsilon \bigg\rVert_{B^{-s}_{1,\infty}} \\
&\leq \bigg\lVert \bigg[ (u^2 + v^2) \textbf{u} \bigg]^\epsilon - \textbf{u}  (u^2 + v^2) \bigg\rVert_{B^{-s}_{1,\infty}} + \bigg\lVert  (u^2 + v^2) \textbf{u} - \textbf{u}  (u^2 + v^2)^\epsilon \bigg\rVert_{B^{-s}_{1,\infty}} \\
&\leq \bigg\lVert \bigg[ (u^2 + v^2) \textbf{u} \bigg]^\epsilon - \textbf{u}  (u^2 + v^2) \bigg\rVert_{B^{-s}_{1,\infty}} + \lVert  (u^2 + v^2) - (u^2 + v^2)^\epsilon \rVert_{B^{\sigma'}_{3/2,\infty}} \lVert \textbf{u} \rVert_{B^{-s}_{3,\infty}} \xrightarrow[]{\epsilon \rightarrow 0} 0.
\end{align*}
\endgroup
Finally, we need to prove that the defect term makes sense. First we claim that
\begin{equation} \label{integralclaim}
\bigg\langle  D_\epsilon (\textbf{u}) , \psi \bigg\rangle_{B^{-s}_{1,\infty} \times B^s_{\infty,1}} = \int_{\mathbb{R}^3}  \bigg[ \nabla \varphi_\epsilon (\xi) \langle \delta \textbf{u}  (\lvert \delta u \rvert^2  + \lvert \delta v \rvert^2  ), \psi \rangle_{B^{-s}_{1,\infty} \times B^s_{\infty,1} } \bigg] d \xi .
\end{equation}
This can be shown as follows. For the mollification (in space) $\mathbf{u}^\eta$ of $\mathbf{u}$, we have by Fubini's theorem
\begin{align*}
&\bigg\langle  \int_{\mathbb{R}^3} d \xi \; \nabla \varphi_\epsilon (\xi) \cdot \delta \textbf{u}^\eta  (\lvert\delta u\rvert^2  + \lvert\delta v\rvert^2 ) , \psi \bigg\rangle_{B^{-s}_{1,\infty} \times B^s_{\infty,1}} \\
&= \int_{\mathbb{R}^3} d \xi \bigg[ \nabla \varphi_\epsilon (\xi) \langle \delta \textbf{u}^\eta  (\lvert \delta u \rvert^2 + \lvert \delta v \rvert^2  ), \psi \rangle_{B^{-s}_{1,\infty} \times B^s_{\infty,1} } \bigg].
\end{align*}
Now we observe that
\begingroup
\allowdisplaybreaks
\begin{align*}
&-\int_{\mathbb{R}^3} d \xi \bigg[ \nabla \varphi_\epsilon (\xi) \langle \delta \textbf{u}^\eta (\lvert \delta u \rvert^2 + \lvert \delta v \rvert^2 ), \psi \rangle_{B^{-s}_{1,\infty} \times B^s_{\infty,1} } \bigg] \\
&+ \int_{\mathbb{R}^3} d \xi \bigg[ \nabla \varphi_\epsilon (\xi) \langle \delta \textbf{u}  (\lvert \delta u \rvert^2  + \lvert \delta v \rvert^2 ), \psi \rangle_{B^{-s}_{1,\infty} \times B^s_{\infty,1} } \bigg] \\
&= \int_{\mathbb{R}^3} d \xi \bigg[ \nabla \varphi_\epsilon (\xi) \langle \delta (\textbf{u} - \textbf{u}^\eta)  (\lvert \delta u \rvert^2  + \lvert \delta v \rvert^2  ), \psi \rangle_{B^{-s}_{1,\infty} \times B^s_{\infty,1} } \bigg] \\
&\leq C \lVert \textbf{u} - \textbf{u}^\eta \rVert_{B^{-s}_{3,\infty}} (\lVert u \rVert_{B^{\sigma'}_{3,\infty}}^2 + \lVert v \rVert_{B^{\sigma'}_{3,\infty}}^2) \lVert \psi \rVert_{B^s_{\infty,1}} dt \xrightarrow[]{\eta \rightarrow 0} 0.
\end{align*}
\endgroup
Moreover, it is straightforward to verify that
\begingroup
\allowdisplaybreaks
\begin{align*}
&\bigg\langle  D_\epsilon (\textbf{u}) - \int_{\mathbb{R}^3} d \xi \; \nabla \varphi_\epsilon (\xi) \cdot \delta \textbf{u}^\eta (\lvert\delta u\rvert^2 + \lvert\delta v\rvert^2) , \psi \bigg\rangle_{B^{-s}_{1,\infty} \times B^s_{\infty,1} }  \\
&\leq C \lVert \textbf{u} - \textbf{u}^\eta \rVert_{B^{-s}_{3,\infty}} (\lVert u \rVert_{B^{\sigma'}_{3,\infty}}^2 + \lVert v \rVert_{B^{\sigma'}_{3,\infty}}^2) \lVert \psi \rVert_{B^s_{\infty,1}} \xrightarrow[]{\eta \rightarrow 0} 0.
\end{align*}
\endgroup
Therefore claim \eqref{integralclaim} is true. Hence we have
\begin{align*}
\langle  D_\epsilon (\textbf{u}) , \psi \rangle_{B^{-s}_{1,\infty} \times B^s_{\infty,1}} &= \int_{\mathbb{R}^3} d \xi \bigg[ \nabla \varphi_\epsilon (\xi) \langle \delta \textbf{u} (\lvert \delta u \rvert^2 + \lvert \delta v \rvert^2 ), \psi \rangle_{B^{-s}_{1,\infty} \times B^s_{\infty,1} } \bigg] \\
&\leq C \lVert \psi \rVert_{B^s_{\infty,1}} \lVert \textbf{u} \rVert_{B^{-s}_{3,\infty}} (\lVert u \rVert_{B^{\sigma'}_{3,\infty}}^2 + \lVert v \rVert_{B^{\sigma'}_{3,\infty}}^2) < \infty.
\end{align*}
In particular, $D_\epsilon (\textbf{u})$ makes sense as an element in $L^1 ((0,T);B^{-s}_{1,\infty} (\mathbb{T}^3))$. The convergence as $\epsilon \rightarrow 0$ follows by looking at the equation we established (for fixed $\epsilon > 0$)
\begin{equation*}
D_\epsilon (\textbf{u}) = -2\partial_t (u u^\epsilon + v v^\epsilon) -2 \nabla \cdot ( p \textbf{u}^\epsilon + p^\epsilon \textbf{u}) - 2 \nabla \cdot ( (u u^\epsilon + v v^\epsilon ) \textbf{u}) - \nabla \cdot \bigg(  \big( (u^2 + v^2) \textbf{u}\big)^\epsilon - \big( u^2 + v^2 \big)^\epsilon \textbf{u} \bigg).
\end{equation*}
As $\epsilon \rightarrow 0$ the right-hand side converges in $W^{-1,1} ((0,T); B^{-1-s}_{1,\infty} (\mathbb{T}^3))$ which means that $D_\epsilon (\mathbf{u})$ converges in the same space. Since the limit of the right-hand side is independent of the choice of mollifier, hence the limit $D (\mathbf{u})$ is independent of the choice of mollifier.
Therefore we conclude that the derived equation of local energy balance holds in the sense of distributions.
\end{proof}

\begin{remark}
    In order to make sense of the hydrostatic Euler equations for a type III weak solution, the assumptions $w\in L^2 ((0,T); B^{-s}_{2,\infty} (\mathbb{T}^3))$ and $u,v \in L^2 ((0,T); B^{\sigma'}_{2,\infty} (\mathbb{T}^3))$ for some $\sigma' > s$ are sufficient. To derive an equation of local energy balance however, the assumption that $w\in L^3 ((0,T); B^{-s}_{3,\infty} (\mathbb{T}^3))$ and $u, v \in L^{3} ((0,T); B^{\sigma'}_{3,\infty} ( \mathbb{T}^3))$ is necessary. Alternatively we could require $w\in L^2 ((0,T); B^{-s}_{2,\infty} (\mathbb{T}^3))$ and $u, v \in L^{4} ((0,T); B^{\sigma'}_{4,\infty} ( \mathbb{T}^3))$ in order to establish an equation of local energy balance. Propositions \ref{weakzerodefect}, \ref{weakdefectzero}, \ref{upperbound} and Theorem~\ref{veryweakconservation} then hold with straightforward modifications.
\end{remark}

\subsection{Proof of conservation of energy}
Now we prove an estimate on the defect term like in Proposition \ref{zerodefectlemma}.
\begin{proposition} \label{weakzerodefect}
Let $\mathbf{u}$ be a type III weak solution of the hydrostatic Euler equations with regularity parameter $s$ such that $w\in L^3((0,T);B_{3,\infty}^{-s}(\mathbb{T}^3))$ and $u,v\in L^3((0,T);B_{3,\infty}^{\sigma'}(\mathbb{T}^3))$. Assume that for all $\psi \in \mathcal{D} ( \mathbb{T}^3 \times (0,T))$ it holds that
\begin{equation} \label{weakzerodefecteq}
\bigg\lvert \langle \delta \mathbf{u} (\lvert \delta u \rvert^2 + \lvert \delta v \rvert^2 ), \psi \rangle_{B^{-s}_{1,\infty} \times B^s_{\infty,1} } \bigg\rvert \leq C(t) \lvert \xi \rvert \sigma (\lvert \xi \rvert) \lVert \psi \rVert_{B^s_{\infty,1}},
\end{equation}
where $C \in L^1 (0,T)$ and $\sigma \in L^\infty_{\mathrm{loc}} (\mathbb{R})$ with the property that $\sigma (\lvert \xi \rvert) \rightarrow 0 $ as $\lvert \xi \rvert \rightarrow 0$. Then $D (\mathbf{u}) = 0$, which implies that energy is conserved.
\end{proposition}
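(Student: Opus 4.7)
The plan is to mirror the proof of Proposition \ref{zerodefectlemma}, with the pointwise $L^1$ integration replaced by the $B^{-s}_{1,\infty} \times B^s_{\infty,1}$ duality pairing. Starting from the identity established during the proof of Theorem \ref{weakenergyequation} (see equation \eqref{integralclaim}), we have, for every $\psi \in \mathcal{D}(\mathbb{T}^3 \times (0,T))$,
\begin{equation*}
\langle D_\epsilon(\mathbf{u}), \psi \rangle = \int_0^T \!\! \int_{\mathbb{R}^3} \nabla \varphi_\epsilon(\xi) \, \langle \delta \mathbf{u}(\lvert \delta u \rvert^2 + \lvert \delta v \rvert^2), \psi(\cdot,t) \rangle_{B^{-s}_{1,\infty} \times B^s_{\infty,1}} \, d\xi \, dt.
\end{equation*}
The goal is to show that the right-hand side vanishes in the limit $\epsilon \to 0$, which combined with the convergence $D_\epsilon \to D$ in $W^{-1,1}((0,T);B^{-s-1}_{1,\infty}(\mathbb{T}^3))$ established in Theorem \ref{weakenergyequation} forces $D(\mathbf{u}) = 0$.

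Applying the assumed bound \eqref{weakzerodefecteq} pointwise in $\xi$ and $t$ to the inner duality pairing yields
\begin{equation*}
\lvert \langle D_\epsilon(\mathbf{u}), \psi \rangle \rvert \leq \bigg( \int_0^T C(t) \, \lVert \psi(\cdot,t) \rVert_{B^s_{\infty,1}} \, dt \bigg) \int_{\mathbb{R}^3} \lvert \nabla \varphi_\epsilon(\xi) \rvert \, \lvert \xi \rvert \, \sigma(\lvert \xi \rvert) \, d\xi.
\end{equation*}
The first factor is finite because $C \in L^1(0,T)$ and $\psi$ is a test function (so $t \mapsto \lVert \psi(\cdot,t)\rVert_{B^s_{\infty,1}}$ is bounded with compact support). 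For the second factor, perform the change of variables $\xi = \epsilon z$, which gives
\begin{equation*}
\int_{\mathbb{R}^3} \lvert \nabla \varphi_\epsilon(\xi) \rvert \, \lvert \xi \rvert \, \sigma(\lvert \xi \rvert) \, d\xi = \int_{\mathbb{R}^3} \lvert \nabla \varphi(z) \rvert \, \lvert z \rvert \, \sigma(\epsilon \lvert z \rvert) \, dz,
\end{equation*}
and since $\nabla \varphi$ is compactly supported while $\sigma(\epsilon \lvert z \rvert) \to 0$ as $\epsilon \to 0$ for fixed $z$, Lebesgue dominated convergence gives that this integral vanishes. Thus $\langle D_\epsilon(\mathbf{u}), \psi \rangle \to 0$ for every test function, and hence $D(\mathbf{u}) = 0$.

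Once $D(\mathbf{u}) = 0$ is established, the local energy balance \eqref{energybalance} reduces to the distributional identity $\partial_t(u^2+v^2) + \nabla \cdot\bigl[(u^2+v^2+2p)\mathbf{u}\bigr] = 0$, and energy conservation follows by exactly the same argument as in the proof of Proposition \ref{exponentlemma}: choose a test function $\psi(t)$ independent of the spatial variable approximating the characteristic function of $(t_1,t_2)$, so that the spatial divergence term drops out, and pass to the limit via the Lebesgue differentiation theorem to recover the equality of the kinetic energy at almost every pair of times $t_1, t_2 \in (0,T)$. The main technical nuisance, rather than a real obstacle, is bookkeeping the duality pairings carefully enough to justify pulling the integration over $\xi$ outside $\langle \cdot, \psi\rangle_{B^{-s}_{1,\infty} \times B^s_{\infty,1}}$; but this was already worked out in the proof of Theorem \ref{weakenergyequation} and can simply be invoked here.
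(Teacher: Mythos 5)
Your proposal is correct and follows essentially the same route as the paper: both start from the identity \eqref{integralclaim} to pull the $\xi$-integration outside the duality pairing, apply the assumed bound \eqref{weakzerodefecteq}, rescale $\xi = \epsilon z$, and conclude via dominated convergence and the hypothesis on $\sigma$ that $D_\epsilon(\mathbf{u}) \to 0$, with energy conservation then following as in Proposition \ref{exponentlemma}. No gaps.
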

\begin{proof}
Under this assumption, we estimate the defect term as follows (by using identity \eqref{integralclaim})
\begingroup
\allowdisplaybreaks
\begin{align*}
\bigg\lvert \int_0^T \bigg\langle - \frac{1}{2 } D_\epsilon (\textbf{u}) , \psi \bigg\rangle_{B^{-s}_{1,\infty} \times B^s_{\infty,1}} dt \bigg\rvert &=\bigg\lvert \int_0^T \int_{\mathbb{R}^3} \nabla \varphi_\epsilon (\xi) \langle \delta \textbf{u} (\lvert \delta u \rvert^2 + \lvert \delta v \rvert^2 ), \psi \rangle_{B^{-s}_{1,\infty} \times B^s_{\infty,1} } d \xi dt \bigg\rvert \\
&\leq \int_0^T \int_{\mathbb{R}^3} \lvert \nabla \varphi_\epsilon (\xi) \rvert \bigg\lvert \langle \delta \textbf{u} (\lvert \delta u \rvert^2 + \lvert \delta v \rvert^2 ), \psi \rangle_{B^{-s}_{1,\infty} \times B^s_{\infty,1} } \bigg\rvert d \xi dt \\
&\leq \int_0^T C(t) \lVert \psi \rVert_{B^s_{\infty,1}} dt \int_{\mathbb{R}^3} \lvert \nabla \varphi_\epsilon (\xi) \rvert \lvert \xi \rvert \sigma (\lvert \xi \rvert) d \xi \\
&\leq \int_0^T C(t) \lVert \psi \rVert_{B^s_{\infty,1}} dt \int_{\mathbb{R}^3} \lvert \nabla_z \varphi (z) \rvert \lvert z \rvert \sigma ( \epsilon \lvert z \rvert) d z,
\end{align*}
\endgroup
where in the last line we made a change of variable $\xi = \epsilon z$ (like was done in Proposition \ref{zerodefectlemma}).
Using the dominated convergence theorem and the assumption on $\sigma$ we deduce that the right-hand side converges to 0 as $\epsilon \rightarrow 0$. Therefore we conclude that $D (\textbf{u}) = 0$.
\end{proof}
Next we give sufficient conditions that ensure that the defect term is zero, in particular such that condition \ref{weakzerodefecteq} is satisfied.
\begin{proposition} \label{weakdefectzero}
Assume that $\mathbf{u}$ is a type III weak solution of the hydrostatic Euler equations with regularity parameter $s$, and that $w\in L^3((0,T);B^{-s}_{3,\infty}(\mathbb{T}^3))$ and $u, v \in L^{3} ((0,T); B^\alpha_{3, \infty} (\mathbb{T}^3))$ with $\alpha > \frac{1}{2} + \frac{s}{2}$. Then $D (\mathbf{u}) = 0$ in the sense of distributions. This then implies that the weak solution conserves energy.
\end{proposition}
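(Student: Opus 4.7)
The plan is to verify the criterion \eqref{weakzerodefecteq} of Proposition~\ref{weakzerodefect}, which then gives $D(\mathbf{u})=0$; conservation of energy follows from the local energy balance \eqref{energybalance} by the same time-localisation and Lebesgue differentiation argument used at the end of the proof of Proposition~\ref{exponentlemma}. Writing $\delta\mathbf{u}=(\delta u,\delta v,\delta w)$, I would split the pairing $\langle \delta\mathbf{u}(|\delta u|^2+|\delta v|^2),\psi\rangle_{B^{-s}_{1,\infty}\times B^s_{\infty,1}}$ into a \emph{horizontal} contribution from $\delta u,\delta v$ and a \emph{vertical} contribution from $\delta w$.

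The horizontal contribution is a genuine scalar function in $L^1$. Via the embedding $B^s_{\infty,1}\hookrightarrow L^\infty$, H\"older's inequality, and $\|\delta u\|_{L^3}\lesssim |\xi|^\alpha\|u\|_{B^\alpha_{3,\infty}}$ (and likewise for $v$), each such term is bounded by a multiple of $|\xi|^{3\alpha}(\|u\|_{B^\alpha_{3,\infty}}+\|v\|_{B^\alpha_{3,\infty}})^3\|\psi\|_{B^s_{\infty,1}}$. Since $\alpha>\tfrac{1}{2}+\tfrac{s}{2}>\tfrac{1}{3}$, this has the required form with $\sigma(|\xi|)=|\xi|^{3\alpha-1}\to 0$.

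The vertical contribution $\delta w\,(|\delta u|^2+|\delta v|^2)$ is the core of the argument and the main obstacle, since $\delta w\in B^{-s}_{3,\infty}$ carries no $|\xi|$-decay and all gain must come from the squared horizontal differences. The key is a two-endpoint estimate: one has $\|(\delta u)^2\|_{L^{3/2}}\lesssim |\xi|^{2\alpha}\|u\|_{B^\alpha_{3,\infty}}^2$ by direct computation, while the Besov product estimate from Lemma~\ref{lemma:paradiff-summary} gives $\|(\delta u)^2\|_{B^\alpha_{3/2,\infty}}\lesssim |\xi|^{\alpha}\|u\|_{B^\alpha_{3,\infty}}^2$. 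Real interpolation (using $L^{3/2}\hookrightarrow B^0_{3/2,\infty}$) then produces, for every $\beta\in[0,\alpha]$,
\begin{equation*}
\|(\delta u)^2\|_{B^\beta_{3/2,\infty}}\lesssim |\xi|^{2\alpha-\beta}\|u\|_{B^\alpha_{3,\infty}}^2 .
\end{equation*}

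Now I would choose $\beta\in(s,\,2\alpha-1)$; this interval is non-empty precisely because $\alpha>\tfrac{1}{2}+\tfrac{s}{2}$, and this is the single place where the hypothesis on $\alpha$ enters. Since the sum of regularities $\beta - s$ is strictly positive, the paraproduct/remainder estimates of Lemma~\ref{lemma:paradiff-summary} apply to $\delta w\in B^{-s}_{3,\infty}$ and $(\delta u)^2\in B^\beta_{3/2,\infty}$, yielding
\begin{equation*}
\|\delta w\,(\delta u)^2\|_{B^{-s}_{1,\infty}}\lesssim \|w\|_{B^{-s}_{3,\infty}}\,|\xi|^{2\alpha-\beta}\,\|u\|_{B^\alpha_{3,\infty}}^2.
\end{equation*}
Duality with $B^s_{\infty,1}$ then bounds the pairing by $\|w\|_{B^{-s}_{3,\infty}}\|u\|_{B^\alpha_{3,\infty}}^2\,|\xi|\,\sigma(|\xi|)\,\|\psi\|_{B^s_{\infty,1}}$ with $\sigma(|\xi|):=|\xi|^{2\alpha-\beta-1}\to 0$, and the prefactor $C(t)$ is integrable on $(0,T)$ by H\"older with exponents $(3,3/2)$, using the $L^3_t$ hypotheses on $w$ and on $u,v$. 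Thus \eqref{weakzerodefecteq} is verified and Proposition~\ref{weakzerodefect} finishes the proof.
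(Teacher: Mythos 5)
Your overall architecture is the same as the paper's: reduce to the criterion \eqref{weakzerodefecteq} of Proposition~\ref{weakzerodefect}, and control the problematic term $\delta w\,(|\delta u|^2+|\delta v|^2)$ by a bilinear Besov estimate pairing $B^{-s}_{3,\infty}$ against $B^{\beta}_{3/2,\infty}$ with $\beta>s$, with the hypothesis $\alpha>\tfrac12+\tfrac s2$ entering exactly where you say it does (non-emptiness of $(s,2\alpha-1)$ versus the paper's choice of $\theta$ with $2\alpha-1-s-3\theta>0$). The technical execution differs in one step: the paper does not split horizontal and vertical contributions (it simply places all of $\delta\mathbf{u}$ in $B^{-s}_{3,\infty}$), and instead of interpolating it proves a dedicated Lemma~\ref{lemma:besov-delta} showing $\lVert \delta u\rVert_{B^{r}_{p,\infty}}\lesssim |\xi|^{\alpha-r}\lVert u\rVert_{B^{\alpha}_{p,\infty}}$, which combined with the quadratic estimate $\lVert f^2\rVert_{B^{s+\theta}_{3/2,\infty}}\lesssim \lVert f\rVert_{B^{\theta}_{3,\infty}}\lVert f\rVert_{B^{s+2\theta}_{3,\infty}}$ of Lemma~\ref{lemma:paradiff-summary} yields $\lVert(\delta u)^2\rVert_{B^{s+\theta}_{3/2,\infty}}\lesssim|\xi|^{2\alpha-s-3\theta}\lVert u\rVert_{B^{\alpha}_{3,\infty}}^{2}$ directly. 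One caution about your version: the claimed endpoint $\lVert(\delta u)^2\rVert_{B^{\alpha}_{3/2,\infty}}\lesssim|\xi|^{\alpha}\lVert u\rVert_{B^{\alpha}_{3,\infty}}^{2}$ does not follow from Lemma~\ref{lemma:paradiff-summary} as stated — the bilinear estimates there either give no $|\xi|$-decay (both factors in $B^{\alpha}_{3,\infty}$) or require one factor in $B^{\alpha+\theta}_{3,\infty}$, which $\delta u$ does not have. So your interpolation endpoint is not literally available; but since you only need $\beta$ strictly inside $(s,2\alpha-1)$, the estimate $\lVert(\delta u)^2\rVert_{B^{\beta}_{3/2,\infty}}\lesssim|\xi|^{2\alpha-\beta-\epsilon}\lVert u\rVert_{B^{\alpha}_{3,\infty}}^{2}$ with an arbitrarily small loss $\epsilon>0$ (obtained exactly as in the paper) suffices, and the rest of your argument — including the integrability of $C(t)$ by H\"older with exponents $(3,3/2)$ and the concluding time-localisation — goes through unchanged.
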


In order to prove Proposition~\ref{weakdefectzero} we need the following lemma.

\begin{lemma} \label{lemma:besov-delta}
    Let $1\leq p\leq \infty$, $s,\epsilon>0$ with $s+\epsilon<1$. Then for all $u\in B^{s+\epsilon}_{p,\infty}$ and $\xi\in \mathbb{R}^3\backslash \{0\}$ we have
    $$
        \lVert \delta u \rVert_{B^s_{p,\infty}} \lesssim |\xi|^{\epsilon} \lVert u \rVert_{B^{s+\epsilon}_{p,\infty}}.
    $$
\end{lemma}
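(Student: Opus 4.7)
The plan is to split $\lVert \delta u\rVert_{B^s_{p,\infty}}$ into its $L^p$ piece and its seminorm piece, and to control each by playing off two elementary estimates against each other:
\begin{equation*}
\lVert \Delta_h \Delta_\xi u\rVert_{L^p} \leq 2 \lVert \Delta_h u\rVert_{L^p} \qquad \text{and} \qquad \lVert \Delta_h \Delta_\xi u\rVert_{L^p} \leq 2\lVert \Delta_\xi u\rVert_{L^p},
\end{equation*}
using the identity $\Delta_h \delta u = \Delta_h \Delta_\xi u = \Delta_\xi \Delta_h u$, and choosing which bound to apply according to whether $|h|$ is larger or smaller than $|\xi|$. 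The hypothesis $s+\epsilon<1$ (so that $\lfloor s+\epsilon\rfloor+1=1$) is what allows us to use first-order differences when we invoke the $B^{s+\epsilon}_{p,\infty}$ seminorm on either $\Delta_h u$ or $\Delta_\xi u$.

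For the $L^p$ piece, I would distinguish two regimes. If $|\xi|\leq 1$, then applying the Besov seminorm of order $s+\epsilon$ directly gives
\begin{equation*}
\lVert \delta u\rVert_{L^p} = \lVert \Delta_\xi u\rVert_{L^p} \leq |\xi|^{s+\epsilon} |u|_{B^{s+\epsilon}_{p,\infty}} \leq |\xi|^\epsilon \lVert u\rVert_{B^{s+\epsilon}_{p,\infty}},
\end{equation*}
since $|\xi|^s\leq 1$. If instead $|\xi|>1$, the trivial bound $\lVert \Delta_\xi u\rVert_{L^p}\leq 2\lVert u\rVert_{L^p}$ already suffices because $|\xi|^\epsilon\geq 1$.

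For the seminorm piece I need to estimate $|h|^{-s}\lVert \Delta_h\Delta_\xi u\rVert_{L^p}$ uniformly in $h\neq 0$. When $|h|\geq |\xi|$, the second elementary estimate gives
\begin{equation*}
|h|^{-s}\lVert \Delta_h\Delta_\xi u\rVert_{L^p} \leq 2|h|^{-s}|\xi|^{s+\epsilon} |u|_{B^{s+\epsilon}_{p,\infty}} \leq 2|\xi|^\epsilon |u|_{B^{s+\epsilon}_{p,\infty}}.
\end{equation*}
When $|h|<|\xi|$, the first elementary estimate gives
\begin{equation*}
|h|^{-s}\lVert \Delta_h\Delta_\xi u\rVert_{L^p} \leq 2|h|^{\epsilon} |u|_{B^{s+\epsilon}_{p,\infty}} \leq 2|\xi|^\epsilon |u|_{B^{s+\epsilon}_{p,\infty}}.
\end{equation*}
Taking the supremum over $h$ and combining with the $L^p$ bound yields the claim.

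There is no genuine obstacle here; the estimate is a standard real-interpolation-style argument. The only thing to be careful about is that the Besov seminorm with exponent $s+\epsilon$ controls first-order differences, which is legitimate precisely because of the assumption $s+\epsilon<1$, and that the case split on $|h|$ vs.\ $|\xi|$ is used to convert the gain $|h|^\epsilon$ (from regularity of $u$) or $|\xi|^\epsilon$ (from shrinking the shift) into the desired factor $|\xi|^\epsilon$.
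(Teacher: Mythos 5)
Your proposal is correct and follows essentially the same route as the paper: the seminorm is handled by exactly the same case split on $\lvert h\rvert$ versus $\lvert \xi\rvert$, trading $\lVert \Delta_h\Delta_\xi u\rVert_{L^p}\leq 2\min(\lVert\Delta_h u\rVert_{L^p},\lVert\Delta_\xi u\rVert_{L^p})$ against the first-order $B^{s+\epsilon}_{p,\infty}$ seminorm. The only (immaterial) difference is in the $L^p$ piece, where the paper bounds $\lVert\delta u\rVert_{L^p}\leq \lvert\xi\rvert^{\epsilon}\lVert u\rVert_{B^{\epsilon}_{p,\infty}}$ and invokes the embedding $B^{s+\epsilon}_{p,\infty}\hookrightarrow B^{\epsilon}_{p,\infty}$, whereas you split on $\lvert\xi\rvert\leq 1$ versus $\lvert\xi\rvert>1$; both are valid one-line arguments.
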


\begin{proof}
First we observe that
$$
    \lVert \delta u \rVert_{L^p} = |\xi|^{\epsilon} \frac{1}{|\xi|^{\epsilon}} \lVert \delta u \rVert_{L^p} \leq |\xi|^{\epsilon} \sup_{h \in \mathbb{R}^3 \backslash \{0\}}\frac{1}{|h|^{\epsilon}} \lVert \delta_h u \rVert_{L^p} \leq |\xi|^{\epsilon} \lVert u \rVert_{B^{\epsilon}_{p,\infty}} \lesssim |\xi|^{\epsilon} \lVert u \rVert_{B^{s+\epsilon}_{p,\infty}}.
$$
Hence it remains to estimate the Besov seminorm. Due to the assumption $s+\epsilon<1$, this seminorm only contains a first-order increment. We obtain
\begingroup
\allowdisplaybreaks
\begin{align*}
    \lvert \delta u \rvert_{B^{s}_{p,\infty}} &= \sup_{h \in \mathbb{R}^3 \backslash \{0\}} \frac{1}{\lvert h \rvert^{s}}  \lVert \delta_\xi \delta_h u \rVert_{L^p} = \lvert \xi \rvert^{\epsilon} \sup_{h \in \mathbb{R}^3 \backslash \{0\}} \frac{1}{\lvert h \rvert^{s} \lvert \xi \rvert^{\epsilon}} \lVert \delta_\xi \delta_h u \rVert_{L^p} \\
    &\leq \lvert \xi \rvert^{\epsilon} \sup_{h \in \mathbb{R}^3 \backslash \{0\}, \lvert h \rvert \leq \lvert \xi \rvert} \frac{1}{\lvert h \rvert^{s} \lvert \xi \rvert^{\epsilon}} \lVert \delta_\xi \delta_h u \rVert_{L^p} + \lvert \xi \rvert^{\epsilon} \sup_{h \in \mathbb{R}^3 \backslash \{0\}, \lvert h \rvert \geq \lvert \xi \rvert} \frac{1}{\lvert h \rvert^{s} \lvert \xi \rvert^{\epsilon}} \lVert \delta_\xi \delta_h u \rVert_{L^p} \\
    &\leq \lvert \xi \rvert^{\epsilon} \sup_{h \in \mathbb{R}^3 \backslash \{0\}, \lvert h \rvert \leq \lvert \xi \rvert} \frac{1}{\lvert h \rvert^{s+\epsilon} }  \lVert \delta_\xi \delta_h u \rVert_{L^p} + \lvert \xi \rvert^{\alpha} \sup_{h \in \mathbb{R}^3 \backslash \{0\}, \lvert h \rvert \geq \lvert \xi \rvert} \frac{1}{ \lvert \xi \rvert^{s+\epsilon}} \lVert \delta_\xi \delta_h u \rVert_{L^p} \\
    &\leq 4 \lvert \xi \rvert^{\epsilon} \lVert  u \rVert_{B^{s+\epsilon}_{p,\infty}}.
\end{align*}
\endgroup
\end{proof}

\begin{proof}[Proof of Proposition~\ref{weakdefectzero}]
By Proposition \ref{weakzerodefect} we have to prove bound \eqref{weakzerodefecteq}. By assumption we have $2\alpha-1-s>0$, thus we can fix $\theta>0$ sufficiently small such that $2\alpha-1-s-3\theta>0$. Using Lemma~\ref{lemma:paradiff-summary} we obtain
\begingroup
\allowdisplaybreaks
\begin{align*}
    &\bigg\lvert \langle \delta \textbf{u} (\lvert \delta u \rvert^2 + \lvert \delta v \rvert^2 ), \psi \rangle_{B^{-s}_{1,\infty} \times B^s_{\infty,1} } \bigg\rvert \\
    &\leq \lVert \psi \rVert_{B^s_{\infty,1}} \lVert \delta \textbf{u} (\lvert \delta u \rvert^2 + \lvert \delta v \rvert^2 ) \rVert_{B^{-s}_{1,\infty}} \\
    &\leq \lVert \psi \rVert_{B^s_{\infty,1}} \lVert \delta \textbf{u} \rVert_{B^{-s}_{3,\infty}} \big\lVert |\delta u|^2 + |\delta v|^2 \big\rVert_{B^{s+\theta}_{3/2,\infty}} \\
    &\leq \lVert \psi \rVert_{B^s_{\infty,1}} \lVert \delta \textbf{u} \rVert_{B^{-s}_{3,\infty}} \Big(\lVert \delta u \rVert_{B^{\theta}_{3,\infty}} \lVert \delta u \rVert_{B^{s+2\theta}_{3,\infty}} + \lVert \delta v \rVert_{B^{\theta}_{3,\infty}} \lVert \delta v \rVert_{B^{s+2\theta}_{3,\infty}} \Big).
\end{align*}
\endgroup
Now in order to apply Lemma~\ref{lemma:besov-delta} we observe that we may assume that $s\leq \frac{1}{2}$, see Remark~\ref{paraproductremark3}, and that $\alpha<1$. So we deduce from Lemma~\ref{lemma:besov-delta} that for all $0 \neq \xi \in \mathbb{R}^3$
\begin{equation*}
    \lVert \delta u \rVert_{B^{\theta}_{3,\infty}} \leq \lvert \xi \rvert^{\alpha - \theta} \lVert u \rVert_{B^{\alpha}_{3,\infty}}, \quad \lVert \delta u \rVert_{B^{s + 2 \theta}_{3,\infty}} \leq \lvert \xi \rvert^{\alpha - 2\theta - s} \lVert u \rVert_{B^{\alpha}_{3,\infty}},
\end{equation*}
and similarly for $\delta v$. We therefore conclude that
\begin{equation*}
    \bigg\lvert \langle \delta \textbf{u} (\lvert \delta u \rvert^2 + \lvert \delta v \rvert^2 ), \psi \rangle_{B^{-s}_{1,\infty} \times B^s_{\infty,1} } \bigg\rvert \leq \lvert \xi \rvert^{2 \alpha-s-3\theta} \lVert \psi \rVert_{B^s_{\infty,1}} \lVert \delta \textbf{u} \rVert_{B^{-s}_{3,\infty}}  (\lVert  u \rVert_{B^{\alpha}_{3,\infty}}^2 + \lVert  v \rVert_{B^{\alpha}_{3,\infty}}^2).
\end{equation*}
By assumption it holds that $\lVert \delta \textbf{u} \rVert_{B^{-s}_{3,\infty}}  (\lVert  u \rVert_{B^{\alpha}_{3,\infty}}^2 + \lVert  v \rVert_{B^{\alpha}_{3,\infty}}^2) \in L^1 (0,T)$, moreover we can take $\sigma (\lvert \xi \rvert) \coloneqq \lvert \xi \rvert^{2\alpha -1 -s-3\theta}$, which satisfies the assumptions of Proposition \ref{weakzerodefect}. Therefore by Proposition \ref{weakzerodefect} we conclude that $D (\textbf{u}) = 0$.
\end{proof}

Finally, we are able to prove conservation of energy under sufficient regularity analogously to Proposition \ref{exponentlemma}.
\begin{theorem} \label{veryweakconservation}
Let $\mathbf{u}$ be a type III weak solution of the hydrostatic Euler equations with regularity parameter $s$, and that $w\in L^3((0,T);B^{-s}_{3,\infty}(\mathbb{T}^3))$ and $u, v \in L^{3} ((0,T); B^\alpha_{3, \infty} (\mathbb{T}^3))$ with $\alpha > \frac{1}{2} + \frac{s}{2}$. Then there is conservation of energy. In other words, it holds that
\begin{equation} \label{conservationequation}
\lVert u(t_1, \cdot ) \rVert_{L^2}^2 + \lVert v(t_1, \cdot) \rVert^2_{L^2} = \lVert u(t_2, \cdot ) \rVert_{L^2}^2 + \lVert v(t_2, \cdot) \rVert^2_{L^2},
\end{equation}
for almost all $t_1, t_2 \in (0,T)$.
\end{theorem}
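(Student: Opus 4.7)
The plan is to follow essentially the same strategy as in the proof of Proposition \ref{exponentlemma}, but now using Proposition \ref{weakdefectzero} as the main input to ensure the vanishing of the defect term for type III weak solutions.

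First, I would invoke Proposition \ref{weakdefectzero} directly: the hypotheses of the theorem (that $\mathbf{u}$ is a type III weak solution with $w\in L^3((0,T);B^{-s}_{3,\infty}(\mathbb{T}^3))$, $u,v\in L^3((0,T);B^{\alpha}_{3,\infty}(\mathbb{T}^3))$ with $\alpha>\tfrac{1}{2}+\tfrac{s}{2}$) are exactly the hypotheses of that proposition. Hence $D(\mathbf{u})=0$ in $W^{-1,1}((0,T);B^{-s-1}_{1,\infty}(\mathbb{T}^3))$, and by Theorem \ref{weakenergyequation} the local energy balance
\begin{equation*}
\int_0^T \int_{\mathbb{T}^3}(u^2+v^2)\partial_t\psi\,dx\,dt + \int_0^T \big\langle (u^2+v^2+2p)\mathbf{u},\nabla\psi\big\rangle_{B^{-s}_{1,\infty}\times B^s_{\infty,1}}\,dt = 0
\end{equation*}
holds for every $\psi\in\mathcal{D}(\mathbb{T}^3\times(0,T))$.

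Next I would localise in time. Given $0<t_1<t_2<T$ and the mollifier $\phi_\epsilon$ from the proof of Proposition \ref{exponentlemma}, I would set
\begin{equation*}
\psi(x,t) \coloneqq \eta(t), \qquad \eta(t) \coloneqq \int_0^t \big( \phi_\epsilon(t'-t_1) - \phi_\epsilon(t'-t_2) \big)\,dt',
\end{equation*}
for $\epsilon>0$ sufficiently small. Since the constant-in-$x$ function on $\mathbb{T}^3$ is smooth, $\psi\in\mathcal{D}(\mathbb{T}^3\times(0,T))$, and $\nabla\psi\equiv 0$, so the duality-bracket term disappears. This reduces the balance to
\begin{equation*}
\int_{t_1-\epsilon}^{t_1+\epsilon}\!\int_{\mathbb{T}^3}(u^2+v^2)\,\phi_\epsilon(t-t_1)\,dx\,dt \;=\; \int_{t_2-\epsilon}^{t_2+\epsilon}\!\int_{\mathbb{T}^3}(u^2+v^2)\,\phi_\epsilon(t-t_2)\,dx\,dt.
\end{equation*}
Since $u,v\in L^\infty((0,T);L^2(\mathbb{T}^3))$, the map $t\mapsto\int_{\mathbb{T}^3}(u^2+v^2)\,dx$ lies in $L^\infty(0,T)$, so the Lebesgue differentiation theorem applied to each side as $\epsilon\to 0$ yields identity \eqref{conservationequation} for almost every $t_1,t_2\in(0,T)$.

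The bulk of the work is really in the preceding propositions, so this proof is essentially a short deduction. The only subtle point is justifying that a purely time-dependent test function $\psi=\eta(t)$ is admissible in the distributional identity derived in Theorem \ref{weakenergyequation}: one needs to observe that $\eta\in\mathcal{D}(\mathbb{T}^3\times(0,T))$ as an $x$-independent smooth function, and that both duality pairings in the energy balance reduce to ordinary integrals in this case because $\nabla\psi=0$ kills the advective/pressure term and the defect term has already been shown to vanish. Once this is in place, the passage to the pointwise-in-time identity via Lebesgue differentiation is routine.
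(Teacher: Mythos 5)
Your proposal is correct and follows essentially the same route as the paper: the paper's proof likewise invokes Proposition \ref{weakdefectzero} to conclude $D(\mathbf{u})=0$, writes down the resulting defect-free local energy balance, and then concludes ``exactly as in the proof of Proposition \ref{exponentlemma}'' with the same time-localised test function and Lebesgue differentiation argument you spell out. Your explicit justification that the $x$-independent test function is admissible and kills the duality-bracket term is a detail the paper leaves implicit, but it matches the intended argument.
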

\begin{proof}
By Proposition \ref{weakdefectzero} we know that $D (\textbf{u}) = 0$ under the regularity assumptions. Therefore we have the following equation of local energy balance
\begin{align*}
&\int_0^T \bigg[ \int_{\mathbb{T}^3} \bigg( u^2 \partial_t \psi + v^2 \partial_t \psi  \bigg) dx + \bigg\langle  (u^2 + v^2 + 2p) \textbf{u}, \nabla \psi \bigg\rangle_{B^{-s}_{1,\infty} \times B^s_{\infty,1}}  \bigg] dt = 0.
\end{align*}
We conclude proceeding exactly as in the proof of Proposition \ref{exponentlemma}.
\end{proof}
We observe that the there is an upper bound on the Onsager exponent, namely $\frac{2}{3}$. We will show this in the following proposition.
\begin{proposition} \label{upperbound}
Let $\mathbf{u}$ be a type III weak solution of the hydrostatic Euler equations with (arbitrary) regularity parameter $s$ such that $u,v \in L^3 ((0,T); B^{\alpha}_{3,\infty} (\mathbb{T}^3))$ with $\alpha > \frac{2}{3}$. Then conservation of energy (as in equation \eqref{conservationequation}) holds.
\end{proposition}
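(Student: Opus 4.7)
The goal is to reduce Proposition \ref{upperbound} to Theorem \ref{veryweakconservation}. That theorem requires both $u,v\in L^3((0,T);B^{\alpha}_{3,\infty}(\mathbb{T}^3))$ and $w\in L^3((0,T);B^{-s'}_{3,\infty}(\mathbb{T}^3))$, tied together through the inequality $\alpha > \tfrac{1}{2}+\tfrac{s'}{2}$. The hypothesis $\alpha>\tfrac23$ makes the latter inequality equivalent to $s' < 2\alpha-1$, which leaves an open window $s'\in (1-\alpha,\,2\alpha-1)$ provided $\alpha\in(\tfrac23,1)$. (If $\alpha\geq 1$ we simply use the embedding $B^{\alpha}_{3,\infty}\hookrightarrow B^{\alpha'}_{3,\infty}$ for any $\alpha'\in(\tfrac23,1)$ and reduce to this case.) Thus the whole problem boils down to extracting enough isotropic Besov regularity of $w$ from the assumed regularity of $(u,v)$ via the incompressibility condition \eqref{equationw}.

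The plan is as follows. First, from the divergence-free condition I would deduce
\begin{equation*}
\partial_z w = -\partial_x u - \partial_y v \in L^3\big((0,T);B^{\alpha-1}_{3,\infty}(\mathbb{T}^3)\big).
\end{equation*}
Second, I would invoke the no-normal flow boundary condition \eqref{typeIIIsymmetry}, which holds in $B^{s-1}_{2,\infty}(\mathbb{T}^2)$ for the original type III regularity parameter, together with Lemma \ref{primitivelemma} applied to the Banach space $X=B^{\alpha-1}_{3,\infty}(\mathbb{T}^2)$ (viewing $\partial_z w$ as a $z$-parametrised family in $X$ via the anisotropic embedding $B^{\alpha-1}_{3,\infty}(\mathbb{T}^3)\hookrightarrow L^3(\mathbb{T};B^{\alpha-1}_{3,\infty}(\mathbb{T}^2))$). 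This yields the explicit representation
\begin{equation*}
w(x,y,z,t) = -\int_0^z (\partial_x u+\partial_y v)(x,y,z',t)\,dz',
\end{equation*}
from which one reads off $w\in L^3((0,T);B^{\alpha-1}_{3,\infty}(\mathbb{T}^3))$; the vertical integration in fact produces one additional derivative in $z$, but only the coarser isotropic bound is needed.

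With this regularity in hand, fix $s' = 1-\alpha\in(0,\tfrac{1}{3})$ (or $s'$ arbitrarily small positive if we have already reduced to $\alpha<1$). Then $w\in L^3((0,T);B^{-s'}_{3,\infty}(\mathbb{T}^3))$, $(u,v)\in L^3((0,T);B^{\alpha}_{3,\infty}(\mathbb{T}^3))$, and $\alpha = 1-s' > \tfrac{1}{2}+\tfrac{s'}{2}$ is precisely the condition $\alpha>\tfrac{2}{3}$. All hypotheses of Theorem \ref{veryweakconservation} are satisfied and the energy identity \eqref{conservationequation} follows immediately.

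The main technical obstacle is the step passing from $\partial_z w\in B^{\alpha-1}_{3,\infty}(\mathbb{T}^3)$ (a genuine distribution of negative order) to a well-defined primitive $w$, since Lemma \ref{primitivelemma} is phrased for Bochner-integrable functions. I would justify this either via Littlewood--Paley decomposition (working frequency-by-frequency in the horizontal variables, where the objects become smooth) or directly by Fourier series on $\mathbb{T}^3$: for $k_z\neq 0$ the relation $\widehat w(k) = -(k_x\widehat u(k)+k_y\widehat v(k))/k_z$ automatically gives the desired Besov estimate, while for $k_z=0$ the boundary condition (equivalently, the odd-in-$z$ symmetry of $w$) forces $\widehat w(k_x,k_y,0)=0$. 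Everything else in the argument is a straightforward matter of bookkeeping.
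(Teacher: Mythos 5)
Your argument is correct and follows essentially the same route as the paper: deduce $w \in L^3((0,T); B^{\alpha-1}_{3,\infty}(\mathbb{T}^3))$ from the incompressibility condition \eqref{equationw} together with Lemma \ref{primitivelemma}, and then invoke Theorem \ref{veryweakconservation}. The only cosmetic differences are that the paper fixes the regularity parameter at $s=\frac{1}{3}$ (using $\alpha-1>-\frac{1}{3}$) rather than your endpoint choice $s'=1-\alpha$, and that it does not spell out the justification of the primitive construction at negative order, which you rightly flag as the one point requiring care.
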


\begin{proof}
We prove that the sufficient condition for energy conservation of Proposition \ref{weakzerodefect} is satisfied. By equation \eqref{equationw} and Lemma \ref{primitivelemma} we know that $w \in L^3 ((0,T); B^{\alpha - 1}_{3,\infty} (\mathbb{T}^3))$. As $\alpha-1>-\frac{1}{3}$, this implies that $w \in L^3 ((0,T); B^{-1/3}_{3,\infty} (\mathbb{T}^3))$ and $\mathbf{u}$ is a type III weak solution with regularity parameter $s=1/3$. Now because $\frac{1}{2}+\frac{s}{2}=\frac{2}{3}<\alpha$, the claim follows immediately from Theorem~\ref{veryweakconservation}.
\end{proof}

One can infer from this result that the notion of a type III weak solution will only have implications on the Onsager conjecture in the range $0 < s < \frac{1}{3}$, because if $s \geq \frac{1}{3}$ Theorem \ref{veryweakconservation} follows from Proposition \ref{upperbound}.

\subsection{Type II weak solutions}
In this section, we introduce type II weak solutions and state sufficient conditions for such solutions to conserve energy.
\begin{definition} \label{typeIIdefinition}
A type II weak solution consists of a velocity field $\mathbf{u} = (u,v,w) : \mathbb{T}^3 \times (0,T) \rightarrow \mathbb{R}^3$ as well as a pressure $p : \mathbb{T}^3 \times (0,T) \rightarrow \mathbb{R}$ together with a regularity parameter $0 < s < \frac{1}{2}$, if the following conditions hold:
\begin{itemize}
    \item $w \in L^2 ((0,T); L^2 (\mathbb{T}; B^{-s}_{2,\infty} (\mathbb{T}^2)))$, $u, v \in L^\infty ((0,T); L^2 (\mathbb{T}^3)) \cap L^2 ((0,T); L^2 (\mathbb{T}; B^{\sigma'}_{2,\infty} (\mathbb{T}^2)))$ for some $\sigma' > s$ and where $L^2 (\mathbb{T})$ refers to the regularity in the vertical direction.
    \item For all $\phi_1, \phi_2 \in \mathcal{D} (\mathbb{T}^3 \times (0,T); \mathbb{R})$ it holds that
    \begingroup
    \allowdisplaybreaks
    \begin{align}
    &\int_0^T \int_{\mathbb{T}^3} u \partial_t \phi_1 d x \; dt + \int_0^T \int_{\mathbb{T}} \langle u \textbf{u} , \nabla \phi_1 \rangle dz \; dt + \int_0^T \int_{\mathbb{T}^3} \Omega v \phi_1 d x \; dt  \\
    &+ \int_0^T \int_{\mathbb{T}^3} p \partial_x \phi_1 d x \; dt = 0, \nonumber \\
    &\int_0^T \int_{\mathbb{T}^3} v \partial_t \phi_2 d x \; dt + \int_0^T \int_{\mathbb{T}} \langle v \textbf{u} , \nabla \phi_2 \rangle dz \; dt - \int_0^T \int_{\mathbb{T}^3} \Omega u \phi_2 d x \; dt \\
    &+ \int_0^T \int_{\mathbb{T}^3} p \partial_y \phi_2 d x \; dt = 0. \nonumber
    \end{align}
    \endgroup
    The brackets $\langle \cdot, \cdot \rangle$ refer to the distributional duality between $\mathcal{D}' (\mathbb{T}^2)$ and $\mathcal{D} (\mathbb{T}^2)$.
    \item For all $\phi_3 \in \mathcal{D} (\mathbb{T}^3 \times (0,T))$ it holds that
    \begin{equation} 
    \int_0^T \int_{\mathbb{T}^3} p \partial_z \phi_3 dx dt = 0.
    \end{equation}
    \item The velocity field $\mathbf{u}$ is divergence-free, in particular for all $\phi_4 \in \mathcal{D} (\mathbb{T}^3 \times (0,T))$ it holds that
    \begin{equation}
    \int_0^T \int_{\mathbb{T}} \langle \textbf{u} , \nabla \phi_4 \rangle dz dt = 0.
    \end{equation}
    \item Finally, for almost all $t \in (0,T)$ it holds that
    \begin{equation} \label{typeIIboundary}
    w(\cdot, 0,t) = w(\cdot, 1, t) = 0 \quad \text{in } B^{-s}_{2,\infty} (\mathbb{T}^2).
    \end{equation}
\end{itemize}
\end{definition}
In a similar fashion as it was argued in Remark \ref{paraproductremark}, one can prove that $u w (\cdot, z,t), v w (\cdot, z, t) \linebreak \in B^{-s}_{1,\infty} (\mathbb{T}^2)$. We skip the details here. One should observe that the vertical velocity is more regular for type II weak solutions compared to type III weak solutions, whereas the opposite holds for the horizontal velocities. Hence, neither notion is stronger than the other if $0 < s < \frac{1}{2}$. In the case $s > \frac{1}{2}$, type III weak solutions are also type II weak solutions. Finally, we observe that boundary condition \eqref{typeIIboundary} can be made sense of in a similar fashion as it was argued in Remark \ref{bcremark}.

Next we consider sufficient conditions for energy conservation. We only state the results and omit the proofs, as they are very similar in nature as for the type III weak solutions, see Lemma \ref{genweaktestfunctions}, Theorem \ref{weakenergyequation}, Propositions \ref{weakzerodefect} and \ref{weakdefectzero}, Theorem \ref{veryweakconservation} and Proposition \ref{upperbound}. We first state the equation of local energy balance.
\begin{theorem}
Let $\mathbf{u}$ be a type II weak solution of the hydrostatic Euler equations with regularity parameter $s$, such that $w \in L^3 ((0,T); L^3 (\mathbb{T}; B^{-s}_{3,\infty} (\mathbb{T}^2)))$ and $u,v \in L^3 ((0,T); L^3 (\mathbb{T}; B^{\sigma'}_{3,\infty} (\mathbb{T}^3)))$ for some $\sigma' > s$. Then for all $\psi \in \mathcal{D} (\mathbb{T}^3 \times (0,T))$ the following equation of local energy balance is satisfied
\begin{align*}
&\int_0^T \bigg[ \int_{\mathbb{T}^3} \bigg( u^2 \partial_t \psi + v^2 \partial_t \psi  \bigg) dx + \int_{\mathbb{T}} \bigg\langle  (u^2 + v^2 + 2p) \mathbf{u}, \nabla \psi \bigg\rangle_{B^{-s}_{1,\infty} \times B^s_{\infty,1}} dz\nonumber \\
&- \frac{1}{2} \int_{\mathbb{T}} \bigg\langle  D(\mathbf{u}), \psi \bigg\rangle_{B^{-s}_{1,\infty} \times B^s_{\infty,1} } dz  \bigg] dt = 0.
\end{align*}
The defect term $D (\mathbf{u})$ is given by
\begin{equation*}
D (\mathbf{u}) \coloneqq \lim_{\epsilon \rightarrow 0} \int_{\mathbb{R}^3} d \xi \bigg[ \nabla \varphi_\epsilon (\xi) \cdot \delta \mathbf{u} (\xi ; x, t) (\lvert\delta u (\xi ; x, t) \rvert^2 + \lvert\delta v (\xi ; x, t) \rvert^2) \bigg],
\end{equation*}
and is independent of the choice of mollifier.
\end{theorem}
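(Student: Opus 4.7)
The plan is to mirror the proof of Theorem \ref{weakenergyequation} almost line by line, replacing the three-dimensional paraproduct estimates by their horizontal analogues applied fiberwise in $z$. First, I would establish a test-function extension lemma analogous to Lemma \ref{genweaktestfunctions}: the weak formulation of Definition \ref{typeIIdefinition} remains valid for $\phi_1,\phi_2 \in W^{1,1}_0((0,T);L^2(\mathbb{T}^3)) \cap L^3((0,T);L^3(\mathbb{T};H^{3+s}(\mathbb{T}^2)))$. The proof is by density of $\mathcal{D}(\mathbb{T}^3\times(0,T))$ in this space and convergence of each term in the weak formulation, the only nontrivial convergence being the advective one: using $u\mathbf{u}(\cdot,z,t)\in B^{-s}_{1,\infty}(\mathbb{T}^2)$ (which follows from the fiberwise application of Lemma \ref{lemma:paradiff-summary} together with the assumed horizontal Besov regularity of $u,v,w$) and integrating over $z$ with H\"older's inequality.

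Second, mollify the momentum equations \eqref{equationu}--\eqref{equationv} in all three spatial variables by $\varphi_\epsilon$ to obtain pointwise equations for $u^\epsilon$ and $v^\epsilon$ in $W^{1,1}((0,T);C^\infty(\mathbb{T}^3))\cap L^\infty((0,T);C^\infty(\mathbb{T}^3))$. Use $\psi u^\epsilon$ and $\psi v^\epsilon$ as test functions in the weak formulation for $\psi\in\mathcal{D}(\mathbb{T}^3\times(0,T))$, and simultaneously multiply the mollified equations by $\psi u$ and $\psi v$, integrating over space-time; subtracting and using the Leibniz rule for $\partial_t$ produces, after the same algebraic manipulations as in Theorem \ref{weakenergyequation}, the identity
\begin{align*}
&\int_0^T \int_{\mathbb{T}^3}\bigl(uu^\epsilon+vv^\epsilon\bigr)\partial_t\psi\,dx\,dt + \int_0^T \int_{\mathbb{T}} \bigl\langle (uu^\epsilon+vv^\epsilon)\mathbf{u},\nabla\psi\bigr\rangle_{B^{-s}_{1,\infty}\times B^s_{\infty,1}} dz\,dt \\
&+ \int_0^T \int_{\mathbb{T}} \bigl\langle p\mathbf{u}^\epsilon + p^\epsilon\mathbf{u},\nabla\psi\bigr\rangle_{B^{-s}_{1,\infty}\times B^s_{\infty,1}} dz\,dt \\
&- \tfrac{1}{2}\int_0^T \int_{\mathbb{T}}\bigl\langle D_\epsilon(\mathbf{u}),\psi\bigr\rangle_{B^{-s}_{1,\infty}\times B^s_{\infty,1}} dz\,dt + \mathrm{commutator} = 0,
\end{align*}
where the commutator term $\bigl((u^2+v^2)\mathbf{u}\bigr)^\epsilon - (u^2+v^2)^\epsilon\mathbf{u}$ is paired against $\nabla\psi$ in the same horizontal duality.

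Third, pass to the limit $\epsilon\to 0$. Following Remark \ref{paraproductremark2} fiberwise, Bony's decomposition in $\mathbb{T}^2$ and H\"older in $z$ give $u^2,v^2,uv\in L^{3/2}((0,T);L^{3/2}(\mathbb{T};B^{\sigma'}_{3/2,\infty}(\mathbb{T}^2)))$, and hence $(u^2+v^2)\mathbf{u},\,p\mathbf{u}\in L^1((0,T);L^1(\mathbb{T};B^{-s}_{1,\infty}(\mathbb{T}^2)))$; standard mollifier convergence in Besov spaces then yields convergence of the commutator term and of the other linear combinations. The defect identity follows verbatim as in Theorem \ref{weakenergyequation}, via the analogue of \eqref{integralclaim} with $\mathcal{D}'\times\mathcal{D}$ replaced fiberwise by $B^{-s}_{1,\infty}(\mathbb{T}^2)\times B^s_{\infty,1}(\mathbb{T}^2)$ and by Fubini in the $z$-variable; independence of the choice of mollifier is obtained by writing $D_\epsilon(\mathbf{u})$ as a linear combination of terms that converge in $W^{-1,1}((0,T);L^1(\mathbb{T};B^{-s-1}_{1,\infty}(\mathbb{T}^2)))$.

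The main technical obstacle is justifying the horizontal paraproduct estimates on products such as $uw$ and $(u^2+v^2)\mathbf{u}$ when the regularity of $w$ (and of $u,v$) in the vertical direction is only $L^3(\mathbb{T})$ or $L^2(\mathbb{T})$ rather than Besov. This requires applying Lemma \ref{lemma:paradiff-summary} pointwise in $z$ and then invoking H\"older's inequality in the vertical direction with the exponents dictated by the assumption $w\in L^3((0,T);L^3(\mathbb{T};B^{-s}_{3,\infty}(\mathbb{T}^2)))$ and $u,v\in L^3((0,T);L^3(\mathbb{T};B^{\sigma'}_{3,\infty}(\mathbb{T}^2)))$; once this fiberwise strategy is set up carefully, the rest of the argument is a direct transcription of the type III case.
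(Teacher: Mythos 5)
Your proposal is correct and is essentially the proof the paper intends: the authors explicitly omit the argument for type II weak solutions, stating only that it is ``very similar in nature'' to the type III case (Lemma \ref{genweaktestfunctions}, Theorem \ref{weakenergyequation}), and your fiberwise-in-$z$ transcription --- applying the horizontal paraproduct estimates of Lemma \ref{lemma:paradiff-summary} on $\mathbb{T}^2$ pointwise in $z$ and then using H\"older in the vertical variable --- is exactly the adaptation required. No gaps.
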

\begin{remark}
Note that the duality brackets are now on the two-dimensional torus $\mathbb{T}^2$, while for the type III weak solutions they were three-dimensional.
\end{remark}
The usual sufficient condition for the defect term to be zero is given below.
\begin{lemma}
Let $\mathbf{u}$ be a type II weak solution of the hydrostatic Euler equations with regularity parameter $s$, such that $w \in L^3 ((0,T); L^3 (\mathbb{T}; B^{-s}_{3,\infty} (\mathbb{T}^2)))$ and $u,v \in L^3 ((0,T); L^3 (\mathbb{T}; B^{\sigma'}_{3,\infty} (\mathbb{T}^3)))$ for some $\sigma' > s$. Moreover, assume that the following inequality holds for all $\psi \in \mathcal{D} (\mathbb{T}^3 \times (0,T))$
\begin{equation*}
\bigg\lvert \int_0^1 \langle \delta \mathbf{u} (\lvert \delta u \rvert^2 + \lvert \delta v \rvert^2 ), \psi \rangle_{B^{-s}_{1,\infty} \times B^s_{\infty,1} } dz \bigg\rvert \leq C(t) \lvert \xi \rvert \sigma (\lvert \xi \rvert) \int_0^1 \lVert \psi \rVert_{B^s_{\infty,1} (\mathbb{T}^2)} dz,
\end{equation*}
where $C \in L^1 (0,T)$ and $\sigma \in L^\infty_{\mathrm{loc}} (\mathbb{R})$ has the property that $\sigma (\lvert \xi \rvert) \rightarrow 0$ as $\lvert \xi \rvert \rightarrow 0$. Then $D (\mathbf{u}) = 0$, which in turn implies conservation of energy.
\end{lemma}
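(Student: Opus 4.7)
The plan is to follow the proof of Proposition \ref{weakzerodefect} essentially verbatim, with the only structural change that all paraproduct pairings and duality brackets must now be understood slice-by-slice on the horizontal torus $\mathbb{T}^2$ rather than globally on $\mathbb{T}^3$, and then integrated in the vertical variable $z \in \mathbb{T}$. The type II equation of local energy balance stated just above provides the starting point, and the mollified defect term $D_\epsilon(\mathbf{u})$ is defined exactly as in the type III setting.

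First, I would mirror identity \eqref{integralclaim} in its slice-wise form,
\begin{equation*}
\int_0^1 \langle D_\epsilon(\mathbf{u}), \psi \rangle_{B^{-s}_{1,\infty} \times B^s_{\infty,1}} dz = \int_{\mathbb{R}^3} \nabla \varphi_\epsilon(\xi) \cdot \int_0^1 \langle \delta \mathbf{u} (\lvert \delta u \rvert^2 + \lvert \delta v \rvert^2), \psi \rangle_{B^{-s}_{1,\infty} \times B^s_{\infty,1}} dz \, d\xi,
\end{equation*}
where all pairings are now on $\mathbb{T}^2$. This is justified exactly as in the proof of Theorem \ref{weakenergyequation}: one horizontally mollifies $\mathbf{u}$, applies Fubini on the smooth approximations, and passes to the limit using the horizontal paraproduct bounds of Lemma \ref{lemma:paradiff-summary} applied on each fixed slice $\{z\} \times \mathbb{T}^2$. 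Inserting the hypothesised bound pointwise in $\xi$ and $t$ then gives
\begin{equation*}
\left\lvert \int_0^T \int_0^1 \langle D_\epsilon(\mathbf{u}), \psi \rangle_{B^{-s}_{1,\infty} \times B^s_{\infty,1}} dz \, dt \right\rvert \leq \left( \int_0^T C(t) \int_0^1 \lVert \psi \rVert_{B^s_{\infty,1}(\mathbb{T}^2)} dz \, dt \right) \int_{\mathbb{R}^3} \lvert \nabla \varphi_\epsilon(\xi) \rvert \, \lvert \xi \rvert \, \sigma(\lvert \xi \rvert) \, d\xi.
\end{equation*}
The change of variable $\xi = \epsilon z$, exactly as in the proofs of Propositions \ref{zerodefectlemma} and \ref{weakzerodefect}, reduces the last factor to $\int_{\mathbb{R}^3} \lvert \nabla \varphi(z) \rvert \, \lvert z \rvert \, \sigma(\epsilon \lvert z \rvert) \, dz$, which tends to zero as $\epsilon \to 0$ by dominated convergence, since $\sigma$ is locally bounded and vanishes at the origin. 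Hence $\langle D_\epsilon(\mathbf{u}), \psi \rangle \to 0$ for every admissible test function $\psi$, and therefore $D(\mathbf{u}) = 0$.

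With the defect term eliminated, energy conservation follows by plugging into the type II equation of local energy balance a purely time-dependent cutoff $\psi(t) = \int_0^t \bigl( \phi_\epsilon(t'-t_1) - \phi_\epsilon(t'-t_2) \bigr) dt'$, just as in the proofs of Proposition \ref{exponentlemma} and Theorem \ref{veryweakconservation}: the $\nabla \psi$ terms vanish identically, and the Lebesgue differentiation theorem in the limit $\epsilon \to 0$ yields equality of the horizontal kinetic energies at almost every pair of times. I expect the only mildly delicate point to be the slice-wise analogue of \eqref{integralclaim}, since the horizontal Besov spaces are not algebras and one must commute the vertical integration with both the mollifier convolution and the duality pairing; however, this is handled by exactly the same approximation argument as in the type III case, so no genuine new obstacle arises.
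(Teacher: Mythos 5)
Your proposal is correct and is essentially the argument the paper intends: the paper omits the proof of this lemma, stating that it follows the type III case (Proposition \ref{weakzerodefect}) with the duality pairings taken slice-wise on $\mathbb{T}^2$ and integrated in $z$, which is precisely what you carry out. The slice-wise analogue of identity \eqref{integralclaim}, the change of variable $\xi = \epsilon z$ with dominated convergence, and the purely time-dependent test function for the final energy equality all match the paper's template.
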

Finally, we can state the sufficient condition for energy conservation for type II weak solutions.
\begin{proposition} \label{typeIIconservation}
Let $\mathbf{u}$ be a type II weak solution of the hydrostatic Euler equations with regularity parameter $s$ such that $w \in L^3 ((0,T); L^3 (\mathbb{T}; B^{-s}_{3,\infty} (\mathbb{T}^2)))$ and $u,v \in L^3 ((0,T); B^{\alpha}_{3,\infty} (\mathbb{T}; \linebreak B^\beta_{3,\infty} (\mathbb{T}^2)))$ with $\alpha > \frac{1}{2}$ and $\beta > \frac{1}{2} + \frac{s}{2}$, then the solution conserves energy. In particular, equation \eqref{conservationequation} holds.
\end{proposition}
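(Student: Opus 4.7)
The plan is to mirror the argument that leads from Proposition \ref{weakzerodefect} to Theorem \ref{veryweakconservation} in the type III case, but now applying the paraproduct estimates fiberwise in the vertical variable so as to exploit the anisotropic Besov regularity of $u$, $v$ and $w$. First I would verify the hypothesis of the sufficient condition (the lemma immediately preceding Proposition \ref{typeIIconservation}), namely the bound
\begin{equation*}
\bigg\lvert \int_0^1 \langle \delta \mathbf{u} (\lvert \delta u \rvert^2 + \lvert \delta v \rvert^2 ), \psi \rangle_{B^{-s}_{1,\infty}(\mathbb{T}^2) \times B^s_{\infty,1}(\mathbb{T}^2)} dz \bigg\rvert \leq C(t)\lvert \xi \rvert \sigma(\lvert \xi \rvert) \int_0^1 \lVert \psi \rVert_{B^s_{\infty,1}(\mathbb{T}^2)} dz,
\end{equation*}
for some $C \in L^1(0,T)$ and $\sigma$ vanishing at the origin. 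Once this is established, the resulting vanishing of $D(\mathbf{u})$ combined with the type II local energy balance allows one to conclude conservation of energy by taking the same one-dimensional test function in time as in the proof of Proposition \ref{exponentlemma}.

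To obtain the above bound I would split an arbitrary increment as $\xi = (\xi_h, \xi_z) \in \mathbb{R}^2 \times \mathbb{R}$ and write
\begin{equation*}
\delta \mathbf{u}(\xi; x,y,z,t) = \delta_{\xi_h} \mathbf{u}(\cdot,\cdot, z + \xi_z, t) + \delta_{\xi_z}\mathbf{u}(\cdot,\cdot,z,t),
\end{equation*}
so that the triple product expands into eight terms, each involving one $z$-translate and either a purely horizontal increment or a purely vertical one applied to each factor. For fixed $z$, the horizontal paraproduct machinery of Lemma \ref{lemma:paradiff-summary} used in the proof of Proposition \ref{weakdefectzero} then yields
\begin{equation*}
\lVert \delta_{\xi_h} \mathbf{u}\,(|\delta_{\xi_h} u|^2+|\delta_{\xi_h} v|^2)(\cdot,\cdot,z,t) \rVert_{B^{-s}_{1,\infty}(\mathbb{T}^2)} \lesssim \lvert \xi_h \rvert^{2\beta - s - 3\theta}\,F_h(z,t),
\end{equation*}
for a small $\theta > 0$ with $2\beta - s - 1 - 3\theta > 0$ (possible since $\beta > \tfrac12 + \tfrac s2$), where $F_h(z,t)$ is a product of the corresponding horizontal Besov norms of $\mathbf{u}(\cdot,\cdot,z,t)$. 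Here I would use the horizontal analogue of Lemma \ref{lemma:besov-delta} (i.e.\ trading seminorms under translation) exactly as in the type III proof. For the terms carrying a vertical increment, I would estimate them in the same $B^{-s}_{1,\infty}(\mathbb{T}^2)$ norm by H\"older, using the embedding $L^3(\mathbb{T}^2) \hookrightarrow B^{-s}_{1,\infty}(\mathbb{T}^2)$ for the $w$-factor and the full $B^\beta_{3,\infty}(\mathbb{T}^2) \hookrightarrow L^3(\mathbb{T}^2)$ control on $u,v$ at the translated heights.

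Next I would integrate in $z$ and use the one-dimensional (vertical) Besov regularity $\alpha > \tfrac12$. Applied to the vertical-increment terms, this gives the pointwise-in-$t$ estimate
\begin{equation*}
\int_0^1 \lVert \delta_{\xi_z} f(\cdot,z,t) \rVert_{L^3(\mathbb{T}^2)}\,dz \lesssim \lvert \xi_z \rvert^{\alpha}\,\lVert f(\cdot,t) \rVert_{B^\alpha_{3,\infty}(\mathbb{T}; L^3(\mathbb{T}^2))},
\end{equation*}
for $f = u$ or $v$; combined with the H\"older split $L^1_z \subset L^3_z \cdot L^{3/2}_z$ this produces a factor $\lvert \xi_z \rvert^{\alpha}$ or $\lvert \xi_z \rvert^{2\alpha}$ depending on how many vertical increments appear in the term being estimated. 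Collecting all eight contributions and using $\lvert \xi_h \rvert, \lvert \xi_z \rvert \leq \lvert \xi \rvert$, the worst overall power is $\lvert \xi \rvert^{\min(\alpha,\ 2\beta - s - 3\theta)}$, which strictly exceeds $1$ under the hypotheses; hence the choice $\sigma(\lvert \xi \rvert) := \lvert \xi \rvert^{\min(\alpha, 2\beta-s-3\theta) - 1}$ does the job. The time-dependent prefactor is an $L^1(0,T)$ combination of products of three Besov norms, each of which lies in $L^3(0,T)$ by assumption, so H\"older in time closes the argument.

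The main obstacle will be the bookkeeping of the eight mixed-increment terms and the verification that the horizontal paraproduct estimates genuinely apply \emph{uniformly in $z$}, so that the resulting right-hand side integrates properly in $z$ against $\lVert \psi(\cdot,\cdot,z,t) \rVert_{B^s_{\infty,1}(\mathbb{T}^2)}$. In particular, one must be careful that the horizontal paraproduct bound produces a factor that is not only integrable in $z$ against the test-function norm, but also controlled by the anisotropic Besov norm $B^\alpha_{3,\infty}(\mathbb{T}; B^\beta_{3,\infty}(\mathbb{T}^2))$ assumed on $u$ and $v$; this is where the asymmetry $\alpha > \tfrac12$ versus $\beta > \tfrac12 + \tfrac s2$ enters naturally, since the horizontal direction must compensate for the negative Besov regularity of $w$ while the vertical direction is insensitive to it.
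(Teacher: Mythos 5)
Your overall architecture is the intended one (the paper omits the proof of this proposition, referring back to the type III argument), and your treatment of the purely horizontal-increment contribution, which gains $\lvert\xi_h\rvert^{2\beta-s-3\theta}$ with $2\beta-s>1$, is correct. But there are two genuine gaps. First, your disposal of the terms carrying a vertical increment ``by H\"older, using the embedding $L^3(\mathbb{T}^2)\hookrightarrow B^{-s}_{1,\infty}(\mathbb{T}^2)$ for the $w$-factor'' presupposes that $w(\cdot,\cdot,z,t)$, or its vertical increment, lies in a Lebesgue space on $\mathbb{T}^2$. For a type II weak solution this is false: $w$ is only in $B^{-s}_{3,\infty}(\mathbb{T}^2)$ fiberwise, and a vertical translation does not improve horizontal regularity, so $\delta_{\xi_z}w(\cdot,z)$ is exactly as rough as $w$ itself. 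Every one of your eight terms contains a $\delta w$ factor and must therefore go through the paraproduct pairing of Lemma \ref{lemma:paradiff-summary}, which forces the quadratic factor $\lvert\delta u\rvert^2+\lvert\delta v\rvert^2$ to carry horizontal Besov regularity exceeding $s$.

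Second, the exponent count does not close. You claim the worst power is $\lvert\xi\rvert^{\min(\alpha,\,2\beta-s-3\theta)}$ and that this exceeds $1$, but the hypothesis is only $\alpha>\tfrac12$, so $\sigma(\lvert\xi\rvert)=\lvert\xi\rvert^{\alpha-1}$ does not tend to zero. Moreover, even after repairing the first gap, the mixed terms of the form $\delta w\cdot\delta_{\xi_h}u\cdot\delta_{\xi_z}u$ yield at best $\lvert\xi_h\rvert^{\beta-s-\theta}\lvert\xi_z\rvert^{\alpha}$: the product estimate forces the horizontal-increment factor to retain horizontal regularity $s+\theta$ (the product of two positive-regularity functions only inherits the lower regularity), so its gain is capped at $\beta-s-\theta$, and the total exponent $\alpha+\beta-s$ is only guaranteed to exceed $1-\tfrac{s}{2}<1$ under the stated hypotheses. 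Hence the isotropic criterion $\leq C(t)\lvert\xi\rvert\sigma(\lvert\xi\rvert)$ of the lemma preceding the proposition cannot be verified by this route. A workable fix exploits the fact that $D(\mathbf{u})$ is independent of the choice of mollifier: take an anisotropic mollifier $\varphi_\epsilon(\xi)=\epsilon^{-2-b}\varphi(\xi_h/\epsilon,\xi_z/\epsilon^{b})$ with a suitable $b$ (for instance $b=(\beta-s)/\alpha$), and estimate the horizontal contribution $\nabla_h\varphi_\epsilon\cdot\delta\mathbf{u}_H(\lvert\delta u\rvert^2+\lvert\delta v\rvert^2)$ and the vertical contribution $\partial_z\varphi_\epsilon\,\delta w(\lvert\delta u\rvert^2+\lvert\delta v\rvert^2)$ separately; the thresholds $\alpha>\tfrac12$ and $\beta>\tfrac12+\tfrac{s}{2}$ then enter precisely to make both contributions vanish. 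Your isotropic splitting, as written, does not reach the stated thresholds.
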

Like in the case of type III weak solution, we have a result which is analogous to Proposition \ref{upperbound}.
\begin{proposition} \label{typeIIupperbound}
Let $\mathbf{u}$ be a type II weak solution of the hydrostatic Euler equations with the property that $u,v \in L^3 ((0,T); B^{\alpha}_{3,\infty} (\mathbb{T}; B^\beta_{3,\infty} (\mathbb{T}^2)))$ with $\alpha > \frac{1}{2}$ and $\beta > \frac{2}{3}$ (note that this space can be defined similarly to equation \eqref{anisotropicbesovnorm}). Then the weak solution conserves energy.
\end{proposition}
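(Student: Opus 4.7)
The plan is to reduce Proposition~\ref{typeIIupperbound} to Proposition~\ref{typeIIconservation} by showing that the assumed regularity of $(u,v)$ already forces $w$ to enjoy enough regularity for $\mathbf{u}$ to qualify as a type II weak solution with the specific choice $s = \frac{1}{3}$. Once this is done, the two hypotheses $\alpha > \frac{1}{2}$ and $\beta > \frac{2}{3} = \frac{1}{2} + \frac{1/3}{2}$ exactly match the requirements of Proposition~\ref{typeIIconservation}. This mirrors the strategy of Proposition~\ref{upperbound} but carried out in the anisotropic setting, which is the main novelty.

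First I would use the incompressibility condition \eqref{equationw} written in the form $\partial_z w = -(\partial_x u + \partial_y v)$. The horizontal derivatives $\partial_x, \partial_y$ lose exactly one degree of Besov regularity in the horizontal variables while preserving the vertical regularity, so the assumption on $(u,v)$ gives
\begin{equation*}
\partial_z w \in L^3\big((0,T);\, B^{\alpha}_{3,\infty}(\mathbb{T};\, B^{\beta-1}_{3,\infty}(\mathbb{T}^2))\big).
\end{equation*}
Since $\alpha>0$, the embedding $B^\alpha_{3,\infty}(\mathbb{T}) \hookrightarrow L^3(\mathbb{T})$ gives in particular $\partial_z w \in L^3((0,T); L^3(\mathbb{T}; B^{\beta-1}_{3,\infty}(\mathbb{T}^2)))$.

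Next I would invoke Lemma~\ref{primitivelemma} with the Banach space $X = B^{\beta-1}_{3,\infty}(\mathbb{T}^2)$, applied for a.e.\ $t$, together with the boundary condition \eqref{typeIIboundary} (which gives $w(\cdot,0,t)=0$ as an element of the larger space $B^{\beta-1}_{3,\infty}(\mathbb{T}^2)$, since $\beta - 1 < 0$ in the range of interest, after a routine Besov embedding). This yields the integral representation
\begin{equation*}
w(\cdot,z,t) = -\int_0^z (\partial_x u + \partial_y v)(\cdot,z',t)\, dz',
\end{equation*}
and Minkowski's inequality in the Banach space $B^{\beta-1}_{3,\infty}(\mathbb{T}^2)$ then gives $w \in L^3((0,T); L^\infty(\mathbb{T}; B^{\beta-1}_{3,\infty}(\mathbb{T}^2)))$, hence in particular $w \in L^3((0,T); L^3(\mathbb{T}; B^{\beta-1}_{3,\infty}(\mathbb{T}^2)))$. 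The assumption $\beta > \frac{2}{3}$ is equivalent to $\beta - 1 > -\frac{1}{3}$, so Besov embedding yields $B^{\beta-1}_{3,\infty}(\mathbb{T}^2) \hookrightarrow B^{-1/3}_{3,\infty}(\mathbb{T}^2)$, and consequently $w \in L^3((0,T); L^3(\mathbb{T}; B^{-1/3}_{3,\infty}(\mathbb{T}^2)))$. This is precisely the vertical-velocity regularity required to view $\mathbf{u}$ as a type II weak solution with regularity parameter $s = \frac{1}{3}$, the horizontal regularity assumption being trivially inherited with $\sigma' = \beta > \frac{1}{3} = s$.

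With $s = \frac{1}{3}$ fixed, Proposition~\ref{typeIIconservation} applies provided $\alpha > \frac{1}{2}$ and $\beta > \frac{1}{2} + \frac{s}{2} = \frac{2}{3}$, both of which are precisely the standing assumptions. This yields \eqref{conservationequation} and completes the proof. The only place requiring real care (rather than routine bookkeeping) is the passage from $\partial_z w$ to $w$ in the anisotropic mixed Banach space: one must check that Minkowski's inequality applies cleanly to Besov norms of negative order and that the trace at $z=0$ prescribed by \eqref{typeIIboundary} is consistent with the continuity in $z$ provided by Lemma~\ref{primitivelemma}. Everything else is a direct reduction to the already-established type II result.
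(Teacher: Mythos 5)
Your proposal is correct and follows exactly the route the paper intends: the paper omits the proof of Proposition \ref{typeIIupperbound}, stating it is analogous to Proposition \ref{upperbound}, whose proof is precisely your reduction — use \eqref{equationw} and Lemma \ref{primitivelemma} to show $w$ gains the regularity of a solution with parameter $s=\tfrac{1}{3}$ (here $\beta-1>-\tfrac13$ playing the role of $\alpha-1>-\tfrac13$), then invoke the conservation result (Proposition \ref{typeIIconservation}) with $\beta>\tfrac12+\tfrac{s}{2}=\tfrac23$. Your extra care about the trace at $z=0$ and Minkowski's inequality in the anisotropic mixed norm is exactly the bookkeeping the paper leaves implicit.
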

\section{Sufficient conditions in terms of Sobolev spaces} \label{sobolevappendix}
In the next two sections, we give sufficient conditions for type I weak solutions (or at least with similar regularity requirements to type I) to conserve energy solely in terms of the horizontal velocities. In the previous sections, we started with a regularity assumption on the vertical velocity and then derived sufficient conditions for energy conservation in terms of the horizontal velocities (with sometimes additional assumptions on the vertical velocity). However, these assumptions are not independent of each other by the anisotropic regularity and the nonlocality imposed by \eqref{equationw}.

The fact that the assumption that $w \in L^2 (\mathbb{T}^3 \times (0,T))$ induces implicit assumptions on the horizontal velocities can be seen as follows. The fact that the vertical velocity $w$ has $L^2$ spatial regularity means that either $\partial_x u$ and $\partial_y v$ are in $L^2 ( (0,T); L^2 (\mathbb{T}^3))$ or their potential lack of regularity cancels in a particular way that is unclear at this point. In order to gain a better understanding of this issue we want to look at the required regularity assumption on the horizontal velocities $u$ and $v$ such that a regularity assumption on $w$ is no longer necessary for defining a weak solution or proving conservation of energy.

In this section we will prove such sufficient conditions in terms of Sobolev spaces and Lebesgue spaces. These results complement our earlier results. As was shown in Propositions \ref{zerodefectlemma} and \ref{exponentlemma}, condition \eqref{zerodefect} is sufficient for a type I weak solution with $w \in L^2 (\mathbb{T}^3 \times (0,T))$ to conserve energy. However, if we would like to avoid any explicit assumptions on $w$, two separate conditions on $u$ and $v$ are necessary:
\begin{enumerate}
    \item The horizontal velocities $u$ and $v$ need to have sufficient H\"older regularity with a given exponent such that the function $\sigma$ in equation \eqref{zerodefect} is $o(1)$. This automatically implies that their H\"older exponent must be bigger than $\frac{1}{2}$.
    \item The vertical velocity $w$ needs to satisfy some integrability condition in order for the inequality \eqref{zerodefect} to hold, because $\delta \mathbf{u}$ needs to be bounded in some $L^p$ space. Note that we want to proceed here as we did for type I weak solutions.
\end{enumerate}
From this reasoning we can conclude that we can either assume that $u,v$ are in a sufficiently regular Sobolev space such that both conditions are satisfied (by use of the Sobolev embedding theorem), or we have to make two separate assumptions.
\begin{proposition} \label{Sobolevexponent}
Let $u , v \in L^3 ( (0,T); C^{0, \beta} (\mathbb{T}^3) \cap W^{1,p} (\mathbb{T}^3))$ with $\beta > \frac{1}{2}$ and $p>1$ or alternatively $u , v  \in L^3 ((0,T); W^{1,p} (\mathbb{T}^3))$ with $p > 6$. Moreover, assume that $\int_{\mathbb{T}} \nabla_H \cdot \mathbf{u}_H dz = 0$ and that together with the associated $w$ (via equation \eqref{equationw}) the velocity field $\mathbf{u}$ satisfies the weak formulation in equations \eqref{weaku}-\eqref{pressureweak}. Then $D(\mathbf{u}) = 0$. In particular, the weak solution conserves energy (see equation \eqref{energyconservationeq}).
\end{proposition}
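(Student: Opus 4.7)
The plan is to verify the bound \eqref{zerodefect} of Proposition~\ref{zerodefectlemma} and then conclude via the local energy balance of Theorem~\ref{energyequationtheorem} together with the Lebesgue differentiation argument from the end of the proof of Proposition~\ref{exponentlemma}. The second alternative ($u,v\in L^3((0,T);W^{1,p}(\mathbb{T}^3))$ with $p>6$) reduces immediately to the first via the Sobolev embedding $W^{1,p}(\mathbb{T}^3)\hookrightarrow C^{0,1-3/p}(\mathbb{T}^3)$, since $1-3/p>\tfrac{1}{2}$ precisely when $p>6$, so it suffices to treat the first alternative with a fixed $\beta>\tfrac{1}{2}$.

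Working under the primary hypothesis, the first step is to extract the integrability of $w$ from that of $\mathbf{u}_H$. Integrating \eqref{equationw} gives $w(x,y,z,t)=-\int_0^{z}\nabla_H\cdot\mathbf{u}_H(x,y,z',t)\,dz'$; the compatibility condition $\int_{\mathbb{T}}\nabla_H\cdot\mathbf{u}_H\,dz=0$ ensures that $w|_{z=0}=w|_{z=1}=0$ so that $\mathbf{u}$ is a legitimate object to which Theorem~\ref{energyequationtheorem} applies, and Minkowski's inequality delivers $w\in L^3((0,T);L^p(\mathbb{T}^3))\subset L^3((0,T);L^1(\mathbb{T}^3))$.

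The heart of the argument is the pointwise-in-$\xi$ estimate
\begin{equation*}
\int_{\mathbb{T}^3}\lvert\delta\mathbf{u}\rvert\bigl(\lvert\delta u\rvert^2+\lvert\delta v\rvert^2\bigr)\,dx\leq C(t)\lvert\xi\rvert\sigma(\lvert\xi\rvert),
\end{equation*}
which I would prove by splitting $\lvert\delta\mathbf{u}\rvert\leq\lvert\delta u\rvert+\lvert\delta v\rvert+\lvert\delta w\rvert$ and handling the two types of contribution separately. For the purely horizontal contributions, the pointwise H\"older bound $\lvert\delta u\rvert+\lvert\delta v\rvert\leq C(t)\lvert\xi\rvert^\beta$ yields a contribution of order $\lvert\xi\rvert^{3\beta}=\lvert\xi\rvert\cdot\lvert\xi\rvert^{3\beta-1}$ with $3\beta-1>\tfrac{1}{2}$. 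For the remaining cross term containing $\delta w$, since $w$ has no pointwise modulus of continuity I would use H\"older's inequality in the crude form $L^1\times L^\infty$ together with the trivial bound $\lVert\delta w(\cdot,t)\rVert_{L^1}\leq 2\lVert w(\cdot,t)\rVert_{L^1}$ to get
\begin{equation*}
\int_{\mathbb{T}^3}\lvert\delta w\rvert\bigl(\lvert\delta u\rvert^2+\lvert\delta v\rvert^2\bigr)\,dx\leq 2\lVert w(\cdot,t)\rVert_{L^1}\bigl(\lVert u\rVert_{C^{0,\beta}}^2+\lVert v\rVert_{C^{0,\beta}}^2\bigr)\lvert\xi\rvert^{2\beta},
\end{equation*}
which reads as $\lvert\xi\rvert$ times $\lvert\xi\rvert^{2\beta-1}\to 0$ because $\beta>\tfrac{1}{2}$. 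Taking $\sigma(\lvert\xi\rvert):=\lvert\xi\rvert^{2\beta-1}+\lvert\xi\rvert^{3\beta-1}$ and collecting prefactors, the constant $C(t)$ lies in $L^1(0,T)$ by H\"older in time (the product $L^3_t\cdot L^{3/2}_t$ pairing $\lVert w\rVert_{L^1_x}$ with $\lVert u\rVert_{C^{0,\beta}}^2$ gives $L^1_t$). Proposition~\ref{zerodefectlemma} then forces $D(\mathbf{u})=0$, whence the local energy balance collapses to a divergence identity and \eqref{energyconservationeq} follows verbatim from the test-function construction at the end of Proposition~\ref{exponentlemma}.

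The main obstacle is exactly the cross term involving $\delta w$: one has no H\"older control on $w$, only Lebesgue integrability, so the small-$\xi$ decay cannot come from $\delta w$ itself. The argument turns on exploiting the \emph{surplus} H\"older exponent $2\beta>1$ of the horizontal components to absorb the mere boundedness of $\lVert\delta w\rVert_{L^1}$; this is precisely why the threshold $\beta>\tfrac{1}{2}$ (and correspondingly $p>6$ in the Sobolev alternative) appears and cannot be lowered by this method.
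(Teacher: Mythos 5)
Your proposal is correct and follows essentially the same route as the paper: reduce the Sobolev alternative to the H\"older one by embedding, derive Lebesgue integrability of $w$ from \eqref{equationw} via Lemma \ref{primitivelemma}, verify condition \eqref{zerodefect} with $\sigma(\lvert\xi\rvert)\sim\lvert\xi\rvert^{2\beta-1}$ by H\"older's inequality, and conclude with Proposition \ref{zerodefectlemma}. The only cosmetic difference is the choice of exponents in H\"older's inequality --- the paper pairs $\lVert\delta\mathbf{u}\rVert_{L^p}$ against $\lVert\lvert\delta u\rvert^2+\lvert\delta v\rvert^2\rVert_{L^{p/(p-1)}}$ in one stroke, whereas you split off the $\delta w$ contribution and use the $L^1\times L^\infty$ pairing --- which does not change the substance of the argument.
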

\begin{proof}
We first observe that the second case reduces to the first case by the Sobolev embedding theorem, therefore we will only consider the first case. We first show that the defect term is bounded. Now since $u(\cdot, t), v (\cdot, t) \in W^{1,p} (\mathbb{T}^3)$, we know that $\partial_x u (\cdot, t) + \partial_y v (\cdot, t) \in L^p (\mathbb{T}^3)$ and by virtue of equation \eqref{equationw} and Lemma \ref{primitivelemma} it follows that $w \in L^3 ( (0,T); L^p (\mathbb{T}^3))$ with $p > 1$. Using H\"older's inequality we obtain that
\begin{align*}
&\lVert \delta \textbf{u} (\xi; \cdot, t) ( \lvert \delta u  (\xi; \cdot, t) \rvert^2 + \lvert\delta v (\xi; \cdot, t)  \rvert^2 ) \rVert_{L^1} \leq \lVert \delta \textbf{u} (\xi; \cdot, t) \rVert_{L^p} \lVert \lvert\delta u  (\xi; \cdot, t) \rvert^2 + \lvert\delta v (\xi; \cdot, t) \rvert^2 \rVert_{L^{p/(p-1)} } \\
&\leq \lVert \delta \textbf{u} (\xi; \cdot, t) \rVert_{L^p} \bigg( \lVert \delta u  (\xi; \cdot, t) \rVert_{L^{2p/(p-1)} }^2 + \lVert \delta v (\xi; \cdot, t) \rVert_{L^{2p/(p-1)} }^2 \bigg).
\end{align*}
Observe that $u(\cdot, t) , v(\cdot, t) \in L^{2p/(p-1)} (\mathbb{T}^3)$ because of the fact that $u(\cdot, t), v(\cdot, t) \in C^{0, \beta} (\mathbb{T}^3)$ with $\beta > \frac{1}{2}$.

In addition we see that
\begin{equation*}
\lVert \delta \textbf{u} (\cdot, t) \rVert_{L^p} \bigg( \lVert \delta u  (\cdot, t) \rVert_{L^{2p/(p-1)} }^2 + \lVert \delta v (\cdot, t) \rVert_{L^{2p/(p-1)} }^2 \bigg) \in L^1 (0,T),
\end{equation*}
because we assumed that $u,v$ and $w$ have $L^3$ temporal regularity.

Finally we need to check condition \eqref{zerodefect}. We see that
\begin{align*}
&\lVert \delta \textbf{u} (\xi; \cdot, t) ( \lvert\delta u (\xi; \cdot, t) \rvert^2 + \lvert\delta v (\xi; \cdot, t) \rvert^2 ) \rVert_{L^1} \lesssim \lvert \xi \rvert^{2 \beta} \lVert \delta \textbf{u} (\xi; \cdot, t) \rVert_{L^p} \\
&\cdot \bigg( \bigg\lVert \frac{\delta u }{\lvert \xi \rvert^{ \beta}} (\xi; \cdot, t) \bigg\rVert_{L^{2p/(p-1)} }^2 + \bigg\lVert \frac{\delta v}{\lvert \xi \rvert^{ \beta}} (\xi; \cdot, t) \bigg\rVert_{L^{2p/(p-1)} }^2 \bigg) \\
&\lesssim 4 \lvert \xi \rvert^{ 2 \beta} \lVert \textbf{u} (\cdot, t) \rVert_{L^p} \lVert 1 \rVert_{L^{2p/(p-1)}}^2 \big(\lVert u (\cdot, t) \rVert_{C^{0, \beta}}^2 + \lVert v (\cdot, t) \rVert_{C^{0, \beta}}^2 \big).
\end{align*}
Thus by Proposition \ref{zerodefectlemma} $D (\textbf{u}) = 0$. Now energy conservation can be proven analogously to the proof of Proposition \ref{exponentlemma}.
\end{proof}
\section{Sufficient conditions in terms of Besov spaces} \label{removalappendix}
As was already mentioned in the previous section, the regularity assumption on $w$ contains (by means of equation \eqref{equationw}) implicit assumptions on the horizontal velocities $u$ and $v$. In this section we prove a criterion in terms of Besov spaces which is similar to the one in Proposition \ref{Sobolevexponent}.
\begin{proposition} \label{besovtheorem}
Assume that $u, v \in L^3 ((0,T);  B^{s}_{\frac{9}{4}, \infty} (\mathbb{T}^3))$ with $s > 1$ and $\int_{\mathbb{T}} \nabla_H \cdot \mathbf{u}_H dz = 0$, such that they (together with the associated $w$ via equation \eqref{equationw}) satisfy the weak formulation given in equations \eqref{weaku}-\eqref{pressureweak}. Then $D (\mathbf{u}) = 0$, which implies that energy is conserved (see equation \eqref{energyconservationeq}).
\end{proposition}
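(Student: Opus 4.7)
The strategy is to first extract additional integrability on the vertical velocity $w$ from the Besov regularity of $u$ and $v$, and then verify the hypotheses of Theorem \ref{energyequationtheorem} and Proposition \ref{zerodefectlemma} in order to conclude $D(\mathbf{u}) = 0$. Since $u, v \in L^3((0,T); B^s_{9/4,\infty}(\mathbb{T}^3))$ with $s > 1$, the horizontal divergence $\partial_x u + \partial_y v$ lies in $L^3((0,T); B^{s-1}_{9/4,\infty}(\mathbb{T}^3)) \subset L^3((0,T); L^{9/4}(\mathbb{T}^3))$, since $s-1 > 0$. Using equation \eqref{equationw} together with the compatibility condition $\int_{\mathbb{T}} \nabla_H \cdot \mathbf{u}_H \, dz = 0$ (which ensures $w$ can be recovered by integration in $z$ consistent with the no-normal flow boundary conditions), Minkowski's inequality yields $w \in L^3((0,T); L^{9/4}(\mathbb{T}^3))$, and hence $\mathbf{u} \in L^3((0,T); L^{9/4}(\mathbb{T}^3))$.

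The core of the proof is an estimate on the defect integrand using H\"older's inequality with exponents $(9/4, 18/5, 18/5)$ (which satisfy $\tfrac{4}{9} + \tfrac{5}{9} = 1$) together with the scaling-preserving Besov embedding $B^s_{9/4,\infty}(\mathbb{T}^3) \hookrightarrow B^{s-1/2}_{18/5,\infty}(\mathbb{T}^3)$, valid because $s - \tfrac{3}{9/4} = (s-\tfrac{1}{2}) - \tfrac{3}{18/5}$ and $\tfrac{9}{4} \leq \tfrac{18}{5}$. Together these give
\begin{equation*}
\int_{\mathbb{T}^3} |\delta \mathbf{u}| \bigl(|\delta u|^2 + |\delta v|^2 \bigr) dx \leq \|\delta \mathbf{u}\|_{L^{9/4}} \bigl( \|\delta u\|_{L^{18/5}}^2 + \|\delta v\|_{L^{18/5}}^2 \bigr) \leq C(t) \, \lvert \xi \rvert^{2s-1},
\end{equation*}
with $C(t) = 2\|\mathbf{u}(\cdot,t)\|_{L^{9/4}} \bigl(\|u(\cdot,t)\|_{B^{s-1/2}_{18/5,\infty}}^2 + \|v(\cdot,t)\|_{B^{s-1/2}_{18/5,\infty}}^2 \bigr)$, which is integrable on $(0,T)$ by H\"older's inequality in time with exponents $(3,3/2)$. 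Since $s > 1$, the exponent $2s-1$ is strictly greater than $1$, so setting $\sigma(|\xi|) := |\xi|^{2s-2}$ produces a function vanishing as $|\xi| \to 0$, and this verifies condition \eqref{zerodefect}.

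To apply Theorem \ref{energyequationtheorem} one must also check that $u,v \in L^4((0,T); L^4(\mathbb{T}^3))$; this follows by combining the Besov embedding $B^s_{9/4,\infty} \hookrightarrow L^{p^*}$ with $p^* > 9$ (when $1 < s < 4/3$, and $L^\infty$ when $s \geq 4/3$) with the type I regularity $u,v \in L^\infty((0,T); L^2(\mathbb{T}^3))$, interpolating since $p^* \geq 6$. Theorem \ref{energyequationtheorem} then produces the equation of local energy balance, Proposition \ref{zerodefectlemma} gives $D(\mathbf{u}) = 0$, and the conservation identity \eqref{energyconservationeq} follows by the same test-function-in-time argument and Lebesgue differentiation step as at the end of the proof of Proposition \ref{exponentlemma}. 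The main obstacle is calibrating the H\"older and embedding exponents: the index $\tfrac{9}{4}$ in the hypothesis is precisely the one which, via $L^{9/4}$-control of $w$ (coming from the one-derivative loss in \eqref{equationw}) and scaling-invariant Besov embedding into $L^{18/5}$ for $u,v$, makes the $|\xi|$-power in the defect estimate strictly exceed $1$ under the exact threshold $s > 1$.
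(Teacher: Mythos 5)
Your proposal is correct and follows essentially the same route as the paper's proof: deriving $w \in L^3((0,T);L^{9/4}(\mathbb{T}^3))$ from \eqref{equationw}, embedding $B^s_{9/4,\infty} \hookrightarrow B^{s-1/2}_{18/5,\infty}$, splitting the defect integrand by H\"older with exponents $9/4$ and $18/5$, and invoking Proposition \ref{zerodefectlemma} with $\sigma(|\xi|)=|\xi|^{2s-2}$. Your explicit verification of the $L^4((0,T);L^4(\mathbb{T}^3))$ hypothesis of Theorem \ref{energyequationtheorem} by interpolation is a detail the paper leaves implicit, but it is a welcome addition rather than a divergence.
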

\begin{proof}
We first recall the following property of Besov spaces \cite{leoni,bahouri}
\begin{equation} \label{besovderivative}
\lVert \partial^\alpha f \rVert_{B^{s - \lvert \alpha \rvert }_{p,q}} \leq \lVert f \rVert_{B^s_{p,q}}, \quad \lvert \alpha \rvert < s.
\end{equation}
This implies that $\partial_z w$ and hence $w \in L^3 ( (0,T); B^{s-1}_{\frac{9}{4}, \infty} (\mathbb{T}^3))$ by equation \eqref{equationw} and by applying Lemma \ref{primitivelemma}, in particular we obtain that $w \in L^3 ((0,T); L^{9/4} (\mathbb{T}^3))$. Subsequently by a Besov embedding \cite{leoni} we can conclude that
\begin{equation*}
B^s_{9/4, \infty} (\mathbb{T}^3) \subset B^{s-1/2}_{18/5, \infty} (\mathbb{T}^3), \quad  s>1.
\end{equation*}
Therefore $u,v \in L^3 ((0,T); B^{r}_{18/5, \infty} (\mathbb{T}^3))$ with $r > \frac{1}{2}$.
Now we estimate the defect term given in equation \eqref{defectdefinition} and use Proposition \ref{zerodefectlemma} to prove it is zero. By using H\"older's inequality we can derive that
\begin{align*}
&\int_{\mathbb{T}^3} \lvert \delta \textbf{u} (\xi ; x,t) \rvert \bigg(\lvert  \delta u (\xi ; x,t) \rvert^2 + \lvert  \delta v(\xi ; x,t) \rvert^2 \bigg) dx \\
&\leq \lVert \delta \textbf{u} (\xi ; \cdot,t) \rVert_{L^{9/4}} \big( \lVert \delta u (\xi ; \cdot,t) \rVert_{L^{18/5}}^2 + \lVert \delta v (\xi ; \cdot,t) \rVert_{L^{18/5}}^2 \big).
\end{align*}
We then get
\begin{align*}
&\lVert \delta \textbf{u} (\xi ; \cdot,t) \rVert_{L^{9/4}} ( \lVert \delta u (\xi ; \cdot,t) \rVert_{L^{18/5}}^2 + \lVert \delta v (\xi ; \cdot,t) \rVert_{L^{18/5}}^2 ) \\
&\leq 2 \lVert \textbf{u} (\cdot,t) \rVert_{L^{9/4}} \lvert \xi \rvert^{2 r} \bigg( \bigg\lVert \frac{\delta u (\xi ; \cdot,t)}{\lvert \xi \rvert^r} \bigg\rVert_{L^{18/5}}^2 + \bigg\lVert \frac{\delta v (\xi ; \cdot,t)}{\lvert \xi \rvert^r} \bigg\rVert_{L^{18/5}}^2 \bigg) \\
&\leq 2 \lVert \textbf{u} (\cdot, t) \rVert_{B^{s-1}_{9/4, \infty}} \lvert \xi \rvert^{2 r} \bigg( \lVert  u (\cdot, t) \rVert_{B^r_{18/5, \infty}}^2 + \lVert  v (\cdot, t) \rVert_{B^r_{18/5, \infty}}^2 \bigg).
\end{align*}
Hence condition \eqref{zerodefect} is satisfied. Finally, we need to prove the regularity in time. From the derived conclusions that $u,v \in L^3 ((0,T); B^r_{18/5, \infty} (\mathbb{T}^3))$ (with $r > \frac{1}{2}$) and $w \in L^3 ((0,T); \linebreak B^{s-1}_{9/4,\infty} (\mathbb{T}^3))$ (and $s > 1$) we observe that
\begin{equation*}
\lVert \textbf{u} (\cdot, t) \rVert_{B^{s-1}_{9/4, \infty}} \bigg( \lVert  u (\cdot, t) \rVert_{B^r_{18/5, \infty}}^2 + \lVert  v (\cdot, t) \rVert_{B^r_{18/5, \infty}}^2 \bigg) \in L^1  (0,T) .
\end{equation*}
Thus we conclude by Proposition \ref{zerodefectlemma} that $D (\textbf{u}) = 0$. Then by a similar argument to Proposition \ref{exponentlemma} we find that the energy is conserved.
\end{proof}
\begin{remark}
From the Besov embedding theorem \cite{leoni} we can derive that
\begin{equation*}
B^s_{9/4,\infty} (\mathbb{T}^3) \subset B^t_{4, \infty} (\mathbb{T}^3)
\end{equation*}
where $s > 1$ and $t > \frac{7}{12}$. It then obviously follows that the same embedding is true if $t > \frac{1}{2}$. Therefore the result derived in Proposition \ref{exponentlemma} is stronger than the results from this section, but for Proposition \ref{exponentlemma} we needed a separate regularity assumption on $w$.
\end{remark}
\begin{remark}
Proposition \ref{besovtheorem} is in the context of type I weak solutions. It is also possible to consider sufficient regularity conditions to both define a weak solution  and prove energy conservation in the context of type II and III weak solutions. This was discussed to some extent in Remark \ref{paraproductremark3} and Proposition \ref{upperbound}.
\end{remark}

\section{Conclusion} \label{conclusion}
In this paper, we have proven that the Onsager exponent of the hydrostatic Euler equations is at most $\frac{1}{2}$ (for type I weak solutions). This result differs from the original Onsager conjecture for the Euler equations, where it has been established that the threshold for energy conservation is $\frac{1}{3}$. As it was mentioned before, a technical explanation for this increase can be given by looking again at the defect term
\begin{equation*}
D (\textbf{u}) (x,t) \coloneqq \frac{1}{2} \lim_{\epsilon \rightarrow 0} \int_{\mathbb{R}^3} \bigg[ \nabla \varphi_\epsilon (\xi) \cdot \delta \textbf{u} (\xi;x,t) (\lvert \delta u  (\xi;x,t) \rvert^2 + \lvert \delta v  (\xi;x,t) \rvert^2) \bigg] d \xi .
\end{equation*}
It is clear from Proposition \ref{zerodefectlemma} that for the expression to be zero, the term $\sigma$ in equation \eqref{zerodefect} has to be $o(1)$. The convergence to zero of $\sigma$ as $\lvert \xi \rvert \rightarrow 0$ can be very slight however. This was illustrated by the consideration of the logarithmic H\"older space in Definition \ref{logholderdef} and the proof that the defect term is zero in Proposition \ref{loglemma}.

The difference with the defect term for the Euler equations, as stated in \cite{duchon}, is that in the latter case there is a cubic term $\lvert \delta \textbf{u} \rvert^3$ and that the vertical velocity has the same regularity as the horizontal velocities in that case. This means that for the Euler equations the Besov regularity necessary for $\sigma$ to be $o(1)$ can be equally distributed among the three terms in the product $\lvert \delta \textbf{u} \rvert^3$, leading to an Onsager exponent of $\frac{1}{3}$.

For the hydrostatic Euler equations, due to the anistropic nature of the equations, the required Besov regularity (for $\sigma$ to be $o(1)$ in equation \eqref{zerodefect}) must be distributed on the term $\lvert\delta \mathbf{u}_H \rvert^2 $, which leads to an Onsager exponent of $\frac{1}{2}$.

We would also like to summarise the different results presented in this paper. Unlike the Euler equations, the hydrostatic Euler equations seem to have a `family' of Onsager conjectures. We now give an overview of these:
\begin{enumerate}
    \item We first consider type I weak solutions (cf. Definition \ref{weaksolutiondefinition}), i.e. we assume that $w \in L^2 ((0,T); L^2 (\mathbb{T}^3))$. Then there are several subcases:
    \begin{itemize}
        \item Without any further assumptions, the Onsager exponent is $\frac{1}{2}$, cf. Proposition \ref{exponentlemma}.
        \item Another sufficient condition for energy conservation is that $u,v \in L^4 ((0,T); \linebreak C^{1/2}_{\mathrm{log}} (\mathbb{T}^3))$, cf. Proposition \ref{loglemma}.
        \item If we assume that $w$ is Besov regular with exponent $\beta$, then the Onsager exponent for the horizontal velocities is $\frac{1}{2} - \frac{1}{2} \beta$, cf. Proposition \ref{holderlemma}.
        \item If we assume that the horizontal velocities $u$ and $v$ are Besov regular with exponent $\alpha$ in the $z$-direction (where $\frac{1}{3} < \alpha < \frac{1}{2}$) and we make no additional assumption on $w$, then the Onsager exponent in the horizontal directions is $2 - 2 \alpha$, cf. Proposition \ref{directionlemma}.
    \end{itemize}
    \item Under the assumption that $w \in L^2 ((0,T); B^{-s}_{2,\infty} (\mathbb{T}^3))$ (for $s \leq \frac{1}{2}$), we can make sense of the equation (with additional regularity assumptions on $u$ and $v$). This is in the context of type III weak solutions, as introduced in Definition \ref{veryweakdef}. In the range $0 < s \leq \frac{1}{3}$, the Onsager exponent is $\frac{1}{2} + \frac{s}{2}$ (cf. Proposition \ref{weakdefectzero}), where the regularity exponents refer to $L^3$-based Besov spaces. For $s \geq \frac{1}{3}$, the Onsager exponent is $\frac{2}{3}$ (cf. Proposition \ref{upperbound}).
    \item In section \ref{veryweakappendix} we also considered the type II weak solutions, for which $w \in L^2 ((0,T); L^2 (\mathbb{T}; \linebreak B^{-s}_{2,\infty} (\mathbb{T}^2)))$, see Definition \ref{typeIIdefinition}. For such solutions we proved that they conserve energy if $w \in L^3 ((0,T); L^3 (\mathbb{T}; B^{-s}_{3,\infty} (\mathbb{T}^3)))$ and $u,v \in L^3 ((0,T); B^{\alpha}_{3,\infty} (\mathbb{T}; B^\beta_{3,\infty} (\mathbb{T}^2)))$ with $\alpha > \frac{1}{2}$ and $\beta > \frac{1}{2} + \frac{s}{2}$, cf. Proposition \ref{typeIIconservation}. Moreover, we proved that if $\alpha > \frac{1}{2}$ and $\beta > \frac{2}{3}$ the solution conserves energy, regardless of assumptions on $w$ (cf. Proposition \ref{typeIIupperbound}).
    \item Finally, there is the case where we do not make explicit regularity assumptions on $w$ (neither for making sense of the equation nor for proving energy conservation) but only on the horizontal velocities $u$ and $v$. Then we considered two subcases in sections \ref{sobolevappendix} and \ref{removalappendix}:
    \begin{itemize}
        \item In terms of Sobolev and H\"older spaces, we proved that $u, v \in L^3 ((0,T); C^{0, \beta} (\mathbb{T}^3) \cap W^{1,p} (\mathbb{T}^3))$ with $p > 1$ and $\beta > \frac{1}{2}$, or alternatively $u,v \in L^3 ((0,T); W^{1,p} (\mathbb{T}^3)) $ with $p > 6$ suffices to define a $w$ such that $(u,v,w)$ is a type I weak solution which conserves energy (cf. Proposition \ref{Sobolevexponent}).
        \item Moreover, we proved that if $u,v \in L^3 ((0,T); B^s_{9/4,\infty} (\mathbb{T}^3))$ with $s > 1$ then we can define $w$ such that $(u,v,w)$ is a type I weak solution which conserves energy (cf. Proposition \ref{besovtheorem}).
    \end{itemize}
    Note that the result of Proposition \ref{upperbound} is related to the results of sections \ref{sobolevappendix} and \ref{removalappendix}. Namely, that for $u,v \in L^3 ((0,T); B^\alpha_{3,\infty} (\mathbb{T}^3))$ with $\alpha > \frac{2}{3}$ it is implied that $w \in L^3 ((0,T); B^{-1/3}_{3,\infty} (\mathbb{T}^3))$ such that $(u,v,w)$ is a type III weak solution with regularity parameter $\frac{1}{3}$ which conserves energy, under the condition that the associated $w$ is periodic and satisfies the symmetry conditions.
\end{enumerate}

The reason we have considered these different cases is that the vertical velocity $w$ no longer has its own dynamical equation but is constrained by the incompressibility instead, therefore the equations are anisotropic and nonlocal. In order to establish an equation of local energy balance, a regularity assumption on $w$ is necessary. One needs the vertical velocity to have sufficient regularity to make sense of a weak solution but on the other hand regularity assumptions on $w$ impose implicit regularity assumptions on $u$ and $v$. This leads us to consider different types of weak solutions to the hydrostatic Euler equations.

\begin{remark} \label{turbulenceremark}
In \cite[~Theorem 2]{drivas} a theorem for the incompressible Navier-Stokes equations was established that describes the absence of anomalous dissipation in the vanishing viscosity limit under sufficient regularity assumptions. If one would assume sufficient regularity for the horizontal velocities uniform in viscosity to rule out anomalous dissipation in the viscous primitive equations, we expect the limiting solution of the inviscid primitive equations to be a type III weak solution rather than a type I weak solution. \\
This is due to the lack of an a priori bound of the $L^2 (\mathbb{T}^3)$ norm of the vertical velocity $w$. In particular, if one assumes solutions of the viscous primitive equations to satisfy uniform regularity bounds in viscosity, say $u, v \in L^3 ((0,T); B^{\alpha}_{3,\infty} (\mathbb{T}^3))$ for $\frac{1}{2} < \alpha < 1$, then by the incompressibility condition the vertical velocity $w$ only has uniform regularity in $L^3 ((0,T); B^{\alpha-1}_{3,\infty} (\mathbb{T}^3))$. Therefore we expect that the type III notion of weak solution has more implications on turbulence than the type I notion. \\
As was noted in the introduction, for classical solutions of the inviscid primitive equations the physical boundary-value problem in the channel with no-normal flow boundary conditions is equivalent to the problem posed on the unit torus with the symmetry assumptions
\begin{equation}
w \text{ odd in } z, \quad u, v \text{ and } p \text{ even in } z.
\end{equation}
In the case of the viscous primitive equations, for strong solutions the problem posed on the torus with the aforementioned symmetry assumptions is equivalent to the physical boundary-value problem in the channel with no-permeability and stress-free boundary conditions (see \cite{caohorizontal})
\begin{equation}
w (x,y,z,t) \lvert_{z = 0, L} = \partial_z u (x,y,z,t) \lvert_{z = 0, L} = \partial_z v (x,y,z,t) \lvert_{z = 0, L} = 0.
\end{equation}
For this reason, even if one wishes to formulate sufficient conditions for the absence of anomalous dissipation in the viscous primitive equations, it is possible to consider the problem on the torus with symmetry conditions instead of in the channel with physical boundaries (as in this case the boundary is `virtual' in some sense). Finally, we note that in the case of Dirichlet boundary conditions, studying sufficient conditions to rule out anomalous dissipation for the viscous primitive equations becomes more subtle due to the appearance of boundary layers.
\end{remark}

There are mainly two ways to establish sufficient conditions for energy conservation. One either starts with a regularity assumption on $w$ to properly define a weak solution and then derive sufficient conditions for $u$ and $v$ to ensure conservation of energy. Alternatively, one can impose sufficient conditions on $u$ and $v$ to define a weak solution and then obtain sufficient conditions for energy conservation. This range of possibilities leads to a `family' of Onsager conjectures, which was summarised above and is the content of this contribution.

\begin{remark}
We observe that it is possible to even extend this `family' of Onsager conjectures by combining the different ideas we used in this paper. For example, it is possible to derive the horizontal and vertical Onsager exponents for type III weak solutions, similar to what was done in Proposition \ref{directionlemma} for type I weak solutions of the hydrostatic Euler equations.
\end{remark}

\section*{Acknowledgements}
The authors would like to thank the anonymous referee for the thorough reading of the paper as well as the useful comments and suggestions. D.W.B. acknowledges support from the Cambridge Trust, the Cantab Capital Institute for Mathematics of Information and the Hendrik Muller fund. S.M. acknowledges support from the Alexander von Humboldt foundation. E.S.T. acknowledges the partial support by the Simons Foundation Award No.~663281 granted to the Institute of Mathematics of the Polish Academy of Sciences for the years 2021-2023. The authors would like to thank the Isaac Newton Institute for Mathematical Sciences, Cambridge, for support and hospitality during the programme ``Mathematical aspects of turbulence: where do we stand?'' where part of this work was undertaken and supported by EPSRC grant no.~EP/K032208/1.

\section*{Declaration} 
The authors declare that they have no competing interests. Data sharing is not applicable to this article as no datasets were generated or analysed during the current study.
\begin{appendices}

\section{Essential estimates regarding the Besov norm} \label{besovestimatesappendix}

The following estimates are used several times in this paper.
\begin{lemma} \label{lemma:essential-besov}
    For any $1\leq p,q,q_1,q_2\leq \infty$, $\alpha\in \mathbb{R}$, $\theta>0$ the following estimates hold
    \begingroup
    \allowdisplaybreaks
    \begin{align*}
        \lVert f \rVert_{B^0_{p,\infty}} &\lesssim\lVert f \rVert_{L^p} \lesssim \lVert f \rVert_{B^0_{p,1}} , \\
        \lVert f \rVert_{B^\alpha_{p,q_1}} &\lesssim \lVert f \rVert_{B^{\alpha+\theta}_{p,q_2}}, \\
        \lVert f \rVert_{B^\alpha_{p,q_1}} &\lesssim \lVert f \rVert_{B^{\alpha}_{p,q_2}}, \quad \text{ if }q_1\geq q_2 \\
        \lVert \partial_i f \rVert_{B^{\alpha-1}_{p,q}} &\lesssim \lVert f \rVert_{B^\alpha_{p,q}}.
    \end{align*}
    \endgroup
\end{lemma}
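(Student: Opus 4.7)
The plan is to prove all four bounds via the equivalent Littlewood-Paley characterisation of Besov spaces, namely $\lVert f \rVert_{B^\alpha_{p,q}} \sim \bigl\lVert \{2^{j\alpha}\lVert \Delta_j f\rVert_{L^p}\}_{j\geq -1}\bigr\rVert_{\ell^q}$, where $\Delta_j$ denotes the dyadic blocks of a standard Littlewood-Paley partition of unity (so $\Delta_j f$ is frequency-localised to $|\xi|\sim 2^j$ for $j\geq 0$ and to $|\xi|\lesssim 1$ for $j=-1$). This representation is equivalent to the difference-quotient characterisation of Definition \ref{besovdef} with constants depending only on $\alpha,p,q,n$, a fact which is classical and documented in \cite{bahouri}. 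Once this reformulation is adopted, each of the four estimates reduces to an elementary manipulation of sequence spaces combined with a classical multiplier estimate, and no single step presents any serious difficulty.

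The two embeddings in the $L^p$--$B^0$ scale are obtained as follows. Since each $\Delta_j$ is convolution with a Schwartz function whose $L^1$-norm is bounded uniformly in $j$ (by scaling invariance of the Littlewood-Paley kernel), Young's convolution inequality gives $\lVert \Delta_j f\rVert_{L^p}\lesssim \lVert f\rVert_{L^p}$ uniformly in $j$, and taking the supremum proves $\lVert f\rVert_{B^0_{p,\infty}}\lesssim \lVert f\rVert_{L^p}$. For the reverse direction one writes $f=\sum_{j\geq -1}\Delta_j f$ and applies the triangle inequality in $L^p$ to deduce $\lVert f\rVert_{L^p}\leq \sum_{j}\lVert \Delta_j f\rVert_{L^p}=\lVert f\rVert_{B^0_{p,1}}$. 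The third bound $B^\alpha_{p,q_2}\hookrightarrow B^\alpha_{p,q_1}$ with $q_1\geq q_2$ is nothing but the counting-measure embedding $\ell^{q_2}\hookrightarrow\ell^{q_1}$ applied to the sequence $\{2^{j\alpha}\lVert\Delta_j f\rVert_{L^p}\}_j$.

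For the second estimate, $B^{\alpha+\theta}_{p,q_2}\hookrightarrow B^\alpha_{p,q_1}$ with $\theta>0$, one factorises $2^{j\alpha}\lVert \Delta_j f\rVert_{L^p}=2^{-j\theta}\cdot 2^{j(\alpha+\theta)}\lVert \Delta_j f\rVert_{L^p}$; since $\theta>0$ the weight $\{2^{-j\theta}\}_{j\geq -1}$ lies in every $\ell^r$ with $r<\infty$, so Hölder's inequality in the $j$-variable — together with the trivial embedding $\ell^{q_2}\hookrightarrow \ell^\infty$ when $q_1<q_2$ — yields the desired bound for all admissible $q_1,q_2$. Finally, the derivative estimate $\lVert \partial_i f\rVert_{B^{\alpha-1}_{p,q}}\lesssim \lVert f\rVert_{B^\alpha_{p,q}}$ is a direct consequence of Bernstein's inequality: because $\partial_i$ commutes with $\Delta_j$ and $\Delta_j f$ is frequency-localised at scale $2^j$, one has $\lVert \partial_i\Delta_j f\rVert_{L^p}\lesssim 2^j\lVert\Delta_j f\rVert_{L^p}$, and multiplying by $2^{j(\alpha-1)}$ and taking the $\ell^q$-norm in $j$ concludes. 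The only mild subtlety worth checking is the low-frequency block $j=-1$, where Bernstein on a ball replaces the annular version; it still gives the correct bound, and in fact a sharper one since the low-frequency part is trivially controlled in every norm.
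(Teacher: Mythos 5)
Your argument is correct, but it is worth noting that the paper does not actually prove this lemma: it simply cites Propositions 2.1--2.3 and Theorem 2.2 of \cite{sawano}. Your self-contained route through the Littlewood--Paley characterisation $\lVert f\rVert_{B^\alpha_{p,q}}\sim\bigl\lVert\{2^{j\alpha}\lVert\Delta_j f\rVert_{L^p}\}_j\bigr\rVert_{\ell^q}$ is the standard way these facts are established in the reference, and each of your four reductions (uniform $L^1$ bound on the dyadic kernels plus Young for the $L^p$--$B^0$ sandwich, $\ell^{q_2}\hookrightarrow\ell^{q_1}$ for the third estimate, H\"older against the geometric weight $2^{-j\theta}$ for the second, and Bernstein for the derivative bound) is sound, including your remark about the low-frequency block. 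Two small points deserve attention. First, the lemma is stated for all $\alpha\in\mathbb{R}$, whereas the paper's Definition \ref{besovdef} via difference quotients only covers $s>0$; your proof implicitly (and correctly) treats the Littlewood--Paley norm as the operative definition for $\alpha\le 0$, which is indeed how the paper uses spaces such as $B^{-s}_{2,\infty}$, but you should say so explicitly rather than appeal to an equivalence that is only available for positive non-integer exponents. Second, your phrasing of the case $q_1<q_2$ in the second estimate is slightly garbled: what is needed there is exactly H\"older with exponent $r$ determined by $\tfrac1{q_1}=\tfrac1r+\tfrac1{q_2}$ (so $r<\infty$ and $\{2^{-j\theta}\}\in\ell^r$), while the case $q_1\ge q_2$ follows from your third estimate combined with the boundedness of the weight; the invocation of $\ell^{q_2}\hookrightarrow\ell^\infty$ is not the mechanism. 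Neither point is a gap in substance, and your proof has the advantage of making the appendix self-contained where the paper defers entirely to the literature.
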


For the proof we refer to \cite{sawano} Propositions 2.1, 2.2, 2.3 and Theorem 2.2.

\section{Review of paradifferential calculus} \label{parareview}

We briefly recall some basic notions of paradifferential calculus, further details can be found in \cite{bahouri,mourrat1,mourrat2}. We first introduce some basic notions of Littlewood-Paley theory, which can again by found in \cite{bahouri}. Let $\rho$ be a smooth function with support in the annulus of radii $\frac{3}{4}$ and $\frac{8}{3}$. We let $\{\rho_j\}_{j=-1}^\infty$ be a dyadic partition of unity, i.e.
\begin{equation*}
\rho_0 (\xi) = \rho (\xi), \quad \rho_j (\xi) = \rho (2^{-j} \xi) \text{ for }j=1,2,\ldots, \quad \rho_{-1} (\xi) = 1 - \sum_{j=0}^\infty \rho_j (\xi).
\end{equation*}
Then the Littlewood-Paley blocks are defined as follows for $f \in \mathcal{S}' (\mathbb{T}^3)$
\begin{equation*}
\widehat{\Delta_j f} (\xi) = \rho_j (\xi) \widehat{f} (\xi), \quad j=-1,0, \ldots.
\end{equation*}
Bony's decomposition of a product is formally given by
\begin{equation*}
f g = T_f g + T_g f + R(f,g).
\end{equation*}
Here $T_f g$ and $T_g f$ are called the paraproducts which are given by
\begin{align*}
T_f g = \sum_{j=-1}^\infty \sum_{i=-1}^{j-2} \Delta_i f \Delta_j g, \quad T_g f = \sum_{j=-1}^\infty \sum_{i=-1}^{j-2} \Delta_i g \Delta_j f.
\end{align*}
The resonance term $R(f,g)$ is given by
\begin{equation*}
R(f,g) = \sum_{\lvert k - j \rvert \leq 1} \Delta_k f \Delta_j g.
\end{equation*}
For the paraproducts we have the following estimates.
\begin{lemma}[Lemma 2.1 in \cite{promel}] \label{paraproduct}
Let $\alpha,\beta\in \mathbb{R}$ and $1\leq p,p_1,p_2,q,q_1,q_2\leq \infty$ with
$$
    \frac{1}{p} = \frac{1}{p_1} + \frac{1}{p_2}, \quad \frac{1}{q} = \frac{1}{q_1} + \frac{1}{q_2}.
$$
\begin{itemize}
    \item For any $f \in L^{p_1} (\mathbb{T}^3) $ and $g \in B^{\beta}_{p_2,q} (\mathbb{T}^3)$ we have that
    \begin{equation*}
        \lVert T_f (g) \rVert_{B^\beta_{p,q}} \lesssim \lVert f \rVert_{L^{p_1}} \lVert g \rVert_{B^\beta_{p_2,q}}.
    \end{equation*}

    \item If $\alpha  < 0$ then for any $f \in B^{\alpha}_{p_1, q_1} (\mathbb{T}^3)$ and $g \in B^\beta_{p_2,q_2} (\mathbb{T}^3)$ we have
    \begin{equation*}
        \lVert T_f (g) \rVert_{B^{\alpha + \beta}_{p,q}} \lesssim \lVert f \rVert_{B^\alpha_{p_1,q_1}} \lVert g \rVert_{B^\beta_{p_2,q_2}}.
    \end{equation*}
\end{itemize}
\end{lemma}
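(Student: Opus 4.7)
The plan is to prove both estimates by the standard Littlewood--Paley approach, exploiting the crucial spectral property that each block $S_{k-1}(f)\,\Delta_k g$ in the paraproduct has Fourier support in a fixed annulus $\{|\xi|\sim 2^k\}$. This implies that $\Delta_j(T_f g) = \sum_{|k-j|\leq N} \Delta_j\bigl(S_{k-1}(f)\,\Delta_k g\bigr)$ for some universal $N$, so the Besov norm reduces to a sequence-space estimate after applying H\"older in $L^p\subset L^{p_1}\cdot L^{p_2}$ on each block, together with the Bernstein-type bound $\|\Delta_j(\cdot)\|_{L^p}\lesssim\|\cdot\|_{L^p}$ uniformly in $j$.

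For the first estimate (where $f\in L^{p_1}$), the key point is that $S_{k-1}$ is convolution with a fixed Schwartz kernel of unit $L^1$-norm (up to a constant), so $\|S_{k-1}(f)\|_{L^{p_1}}\lesssim\|f\|_{L^{p_1}}$ uniformly in $k$ by Young's inequality. Combining this with the block estimate gives, with $b_k:=2^{k\beta}\|\Delta_k g\|_{L^{p_2}}$,
\begin{equation*}
 2^{j\beta}\|\Delta_j(T_f g)\|_{L^p}\;\lesssim\;\|f\|_{L^{p_1}}\sum_{|k-j|\leq N}b_k,
\end{equation*}
and taking $\ell^q_j$-norms yields $\|T_f g\|_{B^\beta_{p,q}}\lesssim\|f\|_{L^{p_1}}\|g\|_{B^\beta_{p_2,q}}$, using that convolution with a finite kernel is trivially $\ell^q\to\ell^q$ bounded.

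For the second estimate (where $f\in B^\alpha_{p_1,q_1}$ with $\alpha<0$), the main new ingredient is an estimate on $\|S_{k-1}(f)\|_{L^{p_1}}$. Writing $a_i:=2^{i\alpha}\|\Delta_i f\|_{L^{p_1}}$, one obtains
\begin{equation*}
 2^{k\alpha}\|S_{k-1}(f)\|_{L^{p_1}}\;\leq\;\sum_{i\leq k-2}2^{(k-i)\alpha}\,a_i\;=\;(K*a)_k,
\end{equation*}
where $K_m:=2^{m\alpha}\chi_{m\geq 2}$ belongs to $\ell^1$ precisely because $\alpha<0$. This is the decisive place where the sign of $\alpha$ enters: without it, the ``low-frequency'' sum defining $S_{k-1}$ would not be absolutely summable. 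Inserting this into the block bound gives
\begin{equation*}
 2^{j(\alpha+\beta)}\|\Delta_j(T_f g)\|_{L^p}\;\lesssim\;b_j\,(K*a)_j,
\end{equation*}
and H\"older's inequality with $1/q=1/q_1+1/q_2$ together with Young's convolution inequality $\|K*a\|_{\ell^{q_1}}\leq\|K\|_{\ell^1}\|a\|_{\ell^{q_1}}$ yield
\begin{equation*}
 \|T_f g\|_{B^{\alpha+\beta}_{p,q}}\;\lesssim\;\|a\|_{\ell^{q_1}}\|b\|_{\ell^{q_2}}\;\lesssim\;\|f\|_{B^\alpha_{p_1,q_1}}\|g\|_{B^\beta_{p_2,q_2}}.
\end{equation*}

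The only genuinely delicate point is verifying the $\ell^1$-summability of the low-frequency kernel in the second case, which requires $\alpha<0$; the rest is bookkeeping combining spectral localisation, H\"older and Young. I would also remark that by $\ell^q$-monotonicity one is free to take the index $q$ on the right-hand sides as small as needed, which is convenient for the applications (in particular, taking $q_1=q_2=\infty$ and $q=\infty$ in Section~\ref{veryweakappendix}).
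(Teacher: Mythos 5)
Your argument is correct and is the standard Littlewood--Paley proof of these paraproduct bounds: the paper itself does not prove this lemma but cites it (Lemma 2.1 of the reference \cite{promel}, with proofs deferred to \cite{bahouri,promel}), and your route — spectral localisation of $S_{k-1}(f)\,\Delta_k g$ in an annulus of size $2^k$, H\"older on each block, uniform $L^{p_1}$-boundedness of $S_{k-1}$ in the first case, and the $\ell^1$-summability of the low-frequency kernel $2^{m\alpha}\chi_{m\geq 2}$ (which is exactly where $\alpha<0$ enters) followed by Young and H\"older in $\ell^q$ in the second — is precisely the argument those references give. The only cosmetic slip is that the display $2^{j(\alpha+\beta)}\lVert\Delta_j(T_f g)\rVert_{L^p}\lesssim b_j\,(K*a)_j$ should carry a finite sum over $\lvert k-j\rvert\leq N$ on the right; this is harmless since a finite shift is bounded on $\ell^q$.
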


For the resonance term the following estimate holds.
\begin{lemma}[Theorem 2.85 in \cite{bahouri}] \label{resonance}
Let $\alpha,\beta\in \mathbb{R}$ and $1\leq p,p_1,p_2,q,q_1,q_2\leq \infty$ with
$$
    \frac{1}{p} = \frac{1}{p_1} + \frac{1}{p_2}, \quad \frac{1}{q} = \frac{1}{q_1} + \frac{1}{q_2}.
$$
\begin{itemize}
    \item If $\alpha+\beta>0$ then we have for any $f \in B^{\alpha}_{p_1,q_1} (\mathbb{T}^3)$ and $g \in B^{\beta}_{p_2,q_2} (\mathbb{T}^3)$
    $$
        \lVert R(f,g) \rVert_{B^{\alpha+\beta}_{p,q}} \lesssim \lVert f \rVert_{B^{\alpha}_{p_1,q_1}} \lVert g \rVert_{B^{\beta}_{p_2,q_2}}.
    $$

    \item If $\alpha+\beta=0$ and $q=1$ then we have for any $f \in B^{\alpha}_{p_1,q_1} (\mathbb{T}^3)$ and $g \in B^{\beta}_{p_2,q_2} (\mathbb{T}^3)$
    $$
        \lVert R(f,g) \rVert_{B^{0}_{p,\infty}} \lesssim \lVert f \rVert_{B^{\alpha}_{p_1,q_1}} \lVert g \rVert_{B^{\beta}_{p_2,q_2}}.
    $$
\end{itemize}
\end{lemma}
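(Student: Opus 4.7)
The plan is to prove the lemma by paraproduct estimates via Littlewood-Paley decomposition. First, I would write the resonance term as $R(f,g) = \sum_{|j-k|\leq 1} \Delta_j f \Delta_k g$ and fix a dyadic level $n$ to estimate $\Delta_n R(f,g)$. The key observation, which is standard in this setting, is that the Fourier support of the product $\Delta_j f \cdot \Delta_k g$ with $|j-k|\leq 1$ is contained in a ball of radius comparable to $2^{\max(j,k)}$ centered at the origin. Consequently $\Delta_n(\Delta_j f \Delta_k g)=0$ unless $\max(j,k) \geq n - N_0$ for some universal constant $N_0$, and one obtains
$$
\Delta_n R(f,g) = \sum_{j \geq n - N_0} \sum_{|k-j|\leq 1} \Delta_n\big(\Delta_j f \, \Delta_k g \big).
$$

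Next, I would apply Hölder's inequality with exponents $1/p = 1/p_1 + 1/p_2$ and the $L^p$-boundedness of the Littlewood-Paley projectors to obtain
$$
\lVert \Delta_n(\Delta_j f \, \Delta_k g)\rVert_{L^p} \lesssim \lVert \Delta_j f\rVert_{L^{p_1}} \lVert \Delta_k g\rVert_{L^{p_2}} \lesssim 2^{-j\alpha} 2^{-k\beta} a_j b_k \, \lVert f\rVert_{B^\alpha_{p_1,q_1}} \lVert g\rVert_{B^\beta_{p_2,q_2}},
$$
where $\{a_j\}_j$ and $\{b_k\}_k$ are nonnegative sequences with $\lVert a\rVert_{\ell^{q_1}} \leq 1$ and $\lVert b\rVert_{\ell^{q_2}} \leq 1$ respectively. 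Using $|k-j|\leq 1$, I replace $b_k$ by $\tilde{b}_j := \max_{|k-j|\leq 1} b_k$, which still defines a sequence in $\ell^{q_2}$ with comparable norm. This reduces everything to controlling the scalar sum $\sum_{j\geq n - N_0} 2^{-j(\alpha+\beta)} a_j \tilde{b}_j$.

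For the case $\alpha+\beta>0$, I multiply $\lVert \Delta_n R(f,g)\rVert_{L^p}$ by $2^{n(\alpha+\beta)}$ and use the geometric decay in $j-n$: since $\alpha+\beta>0$, the factor $2^{(n-j)(\alpha+\beta)}$ is summable in $j\geq n-N_0$, and Young's convolution inequality applied to $\ell^q$-sequences (with $1/q = 1/q_1 + 1/q_2$, using Hölder on the pointwise product $a_j\tilde{b}_j$) yields the claimed bound in $B^{\alpha+\beta}_{p,q}$. For the case $\alpha+\beta=0$ and $q=1$, the geometric decay is lost but the target space is only $B^0_{p,\infty}$, so I just need a uniform-in-$n$ bound; the sum becomes $\sum_{j\geq n-N_0} a_j \tilde{b}_j$, which is controlled by Hölder in $\ell^{q_1}\times \ell^{q_2}\to \ell^1$ (using $1/q_1 + 1/q_2 = 1$) by $\lVert f\rVert_{B^\alpha_{p_1,q_1}} \lVert g\rVert_{B^\beta_{p_2,q_2}}$.

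The main obstacle is precisely the endpoint $\alpha+\beta=0$: the dyadic geometric series no longer converges, so one cannot hope for the $B^0_{p,q}$ bound in general and must weaken the target to $B^0_{p,\infty}$, exploiting both the specific Hölder exponent constraint $1/q_1 + 1/q_2 = 1$ and the loss of summability in the microlocalization index. The positive-regularity case, by contrast, is a straightforward dyadic summation.
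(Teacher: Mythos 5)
The paper does not prove this lemma at all: it is quoted verbatim as Theorem 2.85 of the cited monograph of Bahouri--Chemin--Danchin, and the appendix explicitly defers to that reference for the proof. Your argument is, in substance, the standard textbook proof of that theorem, and it is correct: the spectral localization $\supp \widehat{\Delta_j f\,\Delta_k g}\subset\{|\xi|\lesssim 2^{\max(j,k)}\}$ for $|j-k|\le 1$ correctly yields the one-sided constraint $j\ge n-N_0$ (in contrast to the paraproduct, which is localized in an annulus); H\"older in $L^p$ plus uniform $L^p$-boundedness of $\Delta_n$ gives the block estimate; and the dichotomy between geometric summability for $\alpha+\beta>0$ (Young's inequality in $\ell^q$ after H\"older on $a_j\tilde b_j$) and the mere uniform bound via $\ell^{q_1}\times\ell^{q_2}\to\ell^1$ duality at the endpoint $\alpha+\beta=0$, $1/q_1+1/q_2=1$, is exactly the right mechanism and explains why the target space degrades to $B^0_{p,\infty}$ there. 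The only point worth flagging is that you work throughout with the Littlewood--Paley characterization of the Besov norm, whereas Definition~\ref{besovdef} in the body of the paper uses difference quotients (and only for $s>0$); since the resonance estimate involves negative regularity indices, the dyadic characterization is the only one available, and it is the one the paper's Appendix~\ref{parareview} implicitly adopts, so this is consistent --- but a careful write-up should state which definition of $\lVert\cdot\rVert_{B^s_{p,q}}$ is in force.
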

For further details and proofs we refer to \cite{bahouri,promel}.

Combining Lemmas~\ref{paraproduct} and \ref{resonance} leads to the following result.
\begin{lemma} \label{lemma:paradiff-summary}
    Let $\alpha < \beta$, $\beta + \alpha > 0$, $\theta>0$, and $1 \leq p_1, p_2, p, q_1,q_2,q \leq \infty$ with
    \begin{equation*}
        \frac{1}{p_1} + \frac{1}{p_2} = \frac{1}{p}.
    \end{equation*}
    \begin{itemize}
    \item For any $f \in B^\alpha_{p_1,q_1} (\mathbb{T}^3)$ and $g \in B^\beta_{p_2,q_2} (\mathbb{T}^3)$ we have that
    \begin{equation*}
        \lVert fg \rVert_{B^\alpha_{p,q_1}} \lesssim \lVert f \rVert_{B^\alpha_{p_1,q_1}} \lVert g \rVert_{B^\beta_{p_2,q_2}}.
    \end{equation*}

    \item If $\alpha>0$ we have for any $f \in B^\alpha_{p_1,q} (\mathbb{T}^3)$ and $g \in B^\alpha_{p_2,q} (\mathbb{T}^3)$
    \begin{equation*}
        \lVert fg \rVert_{B^\alpha_{p,q}} \lesssim \lVert f \rVert_{B^\alpha_{p_1,q}} \lVert g \rVert_{B^\alpha_{p_2,q}}.
    \end{equation*}

    \item If $\alpha>0$ we have for any $f \in B^\theta_{p_1,q_1} (\mathbb{T}^3) \cap B^{\alpha+\theta}_{p_2,q_2} (\mathbb{T}^3)$
    \begin{equation*}
        \lVert f^2 \rVert_{B^\alpha_{p,q}} \lesssim \lVert f \rVert_{B^\theta_{p_1,q_1}} \lVert f \rVert_{B^{\alpha+\theta}_{p_2,q_2}}.
    \end{equation*}
    \end{itemize}
\end{lemma}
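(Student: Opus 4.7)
All three assertions follow from Bony's decomposition
\begin{equation*}
fg = T_f g + T_g f + R(f,g),
\end{equation*}
combined with the paraproduct and resonance estimates of Lemmas~\ref{paraproduct} and \ref{resonance} and the elementary embeddings of Lemma~\ref{lemma:essential-besov}. My strategy in each case is to control the three pieces individually in a (possibly higher-regularity) Besov space and then embed downward into the target space.

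For the first bullet, observe that the hypotheses $\alpha<\beta$ and $\alpha+\beta>0$ force $\beta>0$, while $\alpha$ may have either sign. The paraproduct $T_g f$ is handled by the first case of Lemma~\ref{paraproduct}: since $\beta>0$ gives $B^\beta_{p_2,q_2}\hookrightarrow L^{p_2}$, we obtain $\lVert T_g f\rVert_{B^\alpha_{p,q_1}}\lesssim \lVert g\rVert_{B^\beta_{p_2,q_2}}\lVert f\rVert_{B^\alpha_{p_1,q_1}}$ directly. For $T_f g$ I distinguish two subcases: if $\alpha\geq 0$, apply the first case of Lemma~\ref{paraproduct} with $f\in L^{p_1}$ to land in $B^\beta_{p,q_2}$ and then embed into $B^\alpha_{p,q_1}$ by Lemma~\ref{lemma:essential-besov}; if $\alpha<0$, use the second case of Lemma~\ref{paraproduct} to land in $B^{\alpha+\beta}_{p,q}$ and again embed downward (since $\alpha+\beta>\alpha$). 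The resonance $R(f,g)$ is covered by the first case of Lemma~\ref{resonance} (since $\alpha+\beta>0$), producing a bound in $B^{\alpha+\beta}_{p,q}$ which embeds into $B^\alpha_{p,q_1}$. Summing the three contributions yields the claim.

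The second bullet is the diagonal case $\alpha=\beta>0$. Now $B^\alpha_{p_i,q}\hookrightarrow L^{p_i}$ for $i=1,2$, so both $T_f g$ and $T_g f$ fall under the first case of Lemma~\ref{paraproduct} and land directly in $B^\alpha_{p,q}$. For the resonance, the first case of Lemma~\ref{resonance} (applicable since $2\alpha>0$) gives a bound in $B^{2\alpha}_{p,\tilde q}$ with some $\tilde q$ determined by the convolution rule for third indices, and Lemma~\ref{lemma:essential-besov} then yields the embedding $B^{2\alpha}_{p,\tilde q}\hookrightarrow B^\alpha_{p,q}$ by virtue of the strict regularity gain. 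The third bullet is the analogous product $f\cdot f$ split at regularities $\theta$ and $\alpha+\theta$: since $\theta>0$ gives $B^\theta_{p_1,q_1}\hookrightarrow L^{p_1}$, the paraproduct $T_f f$ is controlled by the first case of Lemma~\ref{paraproduct} with output in $B^{\alpha+\theta}_{p,q_2}$, while the resonance $R(f,f)$ is controlled by Lemma~\ref{resonance} with exponent $\alpha+2\theta>0$; both outputs embed into $B^\alpha_{p,q}$ thanks to the strictly positive regularity gain $\theta$.

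The main obstacle is a purely bookkeeping one: tracking how the summation third indices $q, q_1, q_2$ combine across the three pieces of Bony's decomposition. Each piece naturally emerges in a Besov space with a different third index, and one must rely on the monotonicity of Besov spaces in the third index together with the small strict gain in first index (namely $\beta$ in bullet~1, $\alpha$ in bullet~2, and $\theta$ in bullet~3) to absorb all mismatches into the single target space $B^\alpha_{p,\cdot}$ through Lemma~\ref{lemma:essential-besov}.
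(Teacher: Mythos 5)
Your proposal is correct and follows essentially the same route as the paper, which likewise runs Bony's decomposition through Lemmas~\ref{paraproduct}, \ref{resonance} and \ref{lemma:essential-besov} (the paper only writes out the third bullet and declares the others ``similar'', so you actually supply more detail than the source). Two small bookkeeping points in your write-up deserve a fix: in the first bullet your dichotomy should be $\alpha>0$ versus $\alpha\leq 0$, since for $\alpha=0$ and $q_1>1$ one does \emph{not} have $B^0_{p_1,q_1}\hookrightarrow L^{p_1}$ (only $B^0_{p_1,1}\hookrightarrow L^{p_1}$); instead embed $B^0_{p_1,q_1}\hookrightarrow B^{-\epsilon}_{p_1,\infty}$ for small $\epsilon\in(0,\beta)$ and use the second case of Lemma~\ref{paraproduct}. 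Similarly, when you invoke the resonance estimate with both factors carrying finite third indices, the convolution rule can produce $\tilde q<1$; one should first relax one factor to third index $\infty$ (as the paper does by measuring one copy of $f$ in $B^0_{p_1,\infty}$), after which the strict regularity surplus absorbs any remaining mismatch exactly as you describe.
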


\begin{proof}
The estimates claimed in Lemma~\ref{lemma:paradiff-summary} are a simple consequence of Lemmas~\ref{paraproduct} and \ref{resonance}. We only show the third bullet point in more detail as this is the most important estimate needed for this paper. The other estimates can be proven similarly.

By Bony's decomposition we may write
$$
\lVert f^2 \rVert_{B^\alpha_{p,q}} \leq 2\lVert T_f f \rVert_{B^\alpha_{p,q}} + \lVert R(f,f) \rVert_{B^\alpha_{p,q}}.
$$
Lemma~\ref{paraproduct} yields together with Lemma~\ref{lemma:essential-besov}
$$
    \lVert T_f f \rVert_{B^\alpha_{p,q}} \lesssim \lVert f \rVert_{L^{p_1}} \lVert f \rVert_{B^{\alpha}_{p_2,q}} \lesssim \lVert f \rVert_{B^0_{p_1,1}} \lVert f \rVert_{B^{\alpha}_{p_2,q}} \lesssim \lVert f \rVert_{B^\theta_{p_1,q_1}} \lVert f \rVert_{B^{\alpha+\theta}_{p_2,q_2}}.
$$
Moreover we find by using Lemmas~\ref{resonance} and \ref{lemma:essential-besov}
$$
    \lVert R(f,f) \rVert_{B^\alpha_{p,q}} \lesssim \lVert f \rVert_{B^{0}_{p_1,\infty}} \lVert f \rVert_{B^{\alpha}_{p_2,q}}\lesssim \lVert f \rVert_{B^\theta_{p_1,q_1}} \lVert f \rVert_{B^{\alpha+\theta}_{p_2,q_2}}.
$$
\end{proof}

\section{Proof of Proposition \ref{directionlemma} by using commutator estimates} \label{commutatorappendix}
The results in this paper were proven by using an equation of local energy balance and subsequently showing that the defect term in this equation is zero. Most of the results could also have been proven by using commutator estimates (of the type first introduced in \cite{constantin}).

In this section, we will present a proof of Proposition \ref{directionlemma} to illustrate how one can prove these results by using commutator estimates. We repeat the statement of Proposition \ref{directionlemma} here.
\begin{proposition}[Horizontal and vertical Onsager exponents]
Let $\mathbf{u}$ be a type I weak solution of the hydrostatic Euler equations such that $u,v \in L^3 ((0,T); \linebreak B^\alpha_{3,\infty} (\mathbb{T} ; B^\beta_{3, \infty} (\mathbb{T}^2)))$ with $\beta > \frac{2}{3}, \alpha > \frac{1}{3}$ and $2 \alpha + \beta > 2$. We recall that the regularity assumption means that $u$ and $v$ have $B^\alpha_{3,\infty} (\mathbb{T})$ regularity in the $z$-direction, while they have $B^\beta_{3,\infty} (\mathbb{T}^2)$ regularity in the horizontal directions. Under these assumptions the weak solution conserves energy.
\end{proposition}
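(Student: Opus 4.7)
The plan is to prove Proposition \ref{directionlemma} via the classical commutator method of \cite{constantin}, adapted to the anisotropic setting of the hydrostatic Euler equations. First I would perform the same reduction as in the proof already given: if $\beta\leq 1$, then $2\alpha+\beta>2$ forces $\alpha>\frac{1}{2}$, so conservation of energy is covered by Proposition \ref{exponentlemma}; hence I may assume $\beta>1$. A second preparatory observation, essential to separate horizontal and vertical scales, is to mollify with a \emph{tensor-product} kernel $\varphi_\epsilon(y)=\tilde\varphi_\epsilon(y_h)\hat\varphi_\epsilon(y_z)$ rather than with a generic radial mollifier.

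Next I would multiply the mollified momentum equations by $\psi u^\epsilon$ and $\psi v^\epsilon$ for $\psi\in\mathcal{D}(\mathbb{T}^3\times(0,T))$, sum, and use the Bony-style splitting $(fg)^\epsilon=f^\epsilon g^\epsilon+r_\epsilon(f,g)$ with
\begin{equation*}
r_\epsilon(f,g)(x)=\int_{\mathbb{R}^3}\varphi_\epsilon(y)\,\delta_{-y}f(x)\,\delta_{-y}g(x)\,dy-(f-f^\epsilon)(g-g^\epsilon)(x).
\end{equation*}
Incompressibility $\nabla\cdot\mathbf{u}^\epsilon=0$ turns the leading transport terms into a pure flux, the Coriolis terms cancel, and $\partial_z p=0$ makes the pressure terms a total divergence. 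All the remaining regular contributions (including the standard fact $r_\epsilon(f,g)\to 0$ in $L^1$) pass straightforwardly to the limit. Everything thus reduces to proving that
\begin{equation*}
I_\epsilon(\psi):=\int_0^T\!\!\int_{\mathbb{T}^3}\psi\,\bigl(u^\epsilon\,\nabla\cdot r_\epsilon(u,\mathbf{u})+v^\epsilon\,\nabla\cdot r_\epsilon(v,\mathbf{u})\bigr)\,dx\,dt\longrightarrow 0.
\end{equation*}

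After an integration by parts in space and splitting $\nabla=\nabla_H+\partial_z$, the quantitative ingredients are the anisotropic derivative bounds
\begin{equation*}
\|\nabla_H f^\epsilon\|_{L^3}\lesssim \epsilon^{\beta-1}\|f\|_X,\qquad \|\partial_z f^\epsilon\|_{L^3}\lesssim \epsilon^{\alpha-1}\|f\|_X,\quad f\in\{u,v\},
\end{equation*}
with $X:=B^\alpha_{3,\infty}(\mathbb{T};B^\beta_{3,\infty}(\mathbb{T}^2))$, together with the anisotropic commutator estimates
\begin{equation*}
\|r_\epsilon(u,u)\|_{L^{3/2}}\lesssim \epsilon^{2\alpha},\qquad \|r_\epsilon(u,w)\|_{L^{3/2}}\lesssim \epsilon^{\alpha+\beta-1}.
\end{equation*}
The first comes from $\|\delta_{-y}u\|_{L^3}\lesssim |y_h|^\beta+|y_z|^\alpha$; the second also uses $\|\delta_{-y}w\|_{L^3}\lesssim |y_h|^{\beta-1}+|y_z|^{\alpha+1}$, which follows from $\partial_z w=-\nabla_H\cdot\mathbf{u}_H$ combined with Lemma \ref{primitivelemma} and requires $\beta>1$. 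Multiplying these out, the worst contribution to $I_\epsilon$ is $\|\partial_z u^\epsilon\|_{L^3}\|r_\epsilon(u,w)\|_{L^{3/2}}\lesssim \epsilon^{2\alpha+\beta-2}$, while all other pairings yield strictly larger $\epsilon$-exponents thanks to $\alpha>\frac{1}{3}$ and $\beta>\frac{2}{3}$. H\"older in time against the uniform $L^3_t$ controls on $u,v$ and $w$ delivers $I_\epsilon(\psi)\to 0$, whence the local energy balance passes to the limit with no defect term and \eqref{energyconservationeq} follows via the time-mollifier argument concluding the proof of Proposition \ref{exponentlemma}.

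The main obstacle is precisely the mixed term $\|\partial_z u^\epsilon\|_{L^3}\|r_\epsilon(u,w)\|_{L^{3/2}}$: the hypothesis $2\alpha+\beta>2$ is exactly what is needed to absorb the one derivative lost on the vertical velocity through incompressibility, making the condition sharp within the commutator approach. Relatedly, the reduction to $\beta>1$ is forced by the bound $\|\delta_{-y}w\|_{L^3}\lesssim |y_h|^{\beta-1}$, which is an $L^3$ Besov estimate only when $\beta-1>0$; for $\beta\leq 1$ one would have to abandon the $L^p$-based framework and deploy the paraproduct machinery of Section \ref{veryweakappendix}, a detour the reduction above neatly avoids.
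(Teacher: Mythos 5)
Your proposal is correct, but it is worth noting that the paper actually contains two proofs of this proposition and yours coincides with the second one: the main-text proof (Section \ref{regularitysection}) stays entirely within the Duchon--Robert framework, splitting the increment $\delta\mathbf{u}(\xi)=\delta\mathbf{u}(\xi_z;\cdot+\xi_h)+\delta\mathbf{u}(\xi_h;\cdot)$ inside the already-established defect term $D(\mathbf{u})$ and invoking Proposition \ref{zerodefectlemma}, whereas Appendix \ref{commutatorappendix} gives precisely your Constantin--E--Titi commutator argument, with the same reduction to $\beta>1$, the same use of $w\in B^{\alpha+1}_{3,\infty}(\mathbb{T};B^{\beta-1}_{3,\infty}(\mathbb{T}^2))$ via incompressibility and Lemma \ref{primitivelemma}, and the same worst term $\|\partial_z u^\epsilon\|_{L^3}\,\|r_\epsilon(u,w)\|_{L^{3/2}}\lesssim\epsilon^{2\alpha+\beta-2}$. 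The defect-term route has the advantage of reusing the local energy balance of Theorem \ref{energyequationtheorem} (so no new limiting argument is needed), while your commutator route isolates the anisotropic losses more transparently; your tensor-product mollifier is a mild refinement over the paper's isotropic one (the appendix simply bounds $\|\partial_x u^\epsilon\|_{L^3}\lesssim\epsilon^{\alpha-1}$ rather than $\epsilon^{\beta-1}$, which is cruder but still sufficient). One small imprecision: the claimed bound $\|\delta_{-y}w\|_{L^3}\lesssim|y_h|^{\beta-1}+|y_z|^{\alpha+1}$ overstates the vertical part, since a first-order increment of a function in $B^{\alpha+1}_{3,\infty}$ with $\alpha+1>1$ only yields $O(|y_z|)$; this is harmless here because the binding contribution is the mixed term $|y_z|^{\alpha}|y_h|^{\beta-1}$, and every vertical-increment pairing still carries an $\epsilon$-exponent strictly greater than zero after multiplying by $\epsilon^{\alpha-1}$.
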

\begin{proof}
We restrict to the case $\beta > 1$, as otherwise by the assumptions of the theorem we would need $\alpha > \frac{1}{2}$. Then both $\alpha$ and $\beta$ are bigger than $\frac{1}{2}$ and the conclusion of the proposition was already proven in Proposition \ref{exponentlemma}.

We take $\phi_1 = (u^\epsilon)^\epsilon$ and $\phi_2 = (v^\epsilon)^\epsilon$ in the weak formulation in Definition \ref{weaksolutiondefinition}, transfer one of the mollifications and add the equations together. This gives
\begin{align*}
&\int_0^T \int_{\mathbb{T}^3} u^\epsilon \partial_t u^\epsilon d x \; dt + \int_0^T \int_{\mathbb{T}^3} v^\epsilon \partial_t v^\epsilon d x \; dt + \int_0^T \int_{\mathbb{T}^3} (u \textbf{u})^\epsilon \cdot \nabla u^\epsilon d x \; dt + \int_0^T \int_{\mathbb{T}^3} (v \textbf{u} )^\epsilon \cdot \nabla v^\epsilon d x \; dt  \\
&- \int_0^T \int_{\mathbb{T}^3} \Omega u^\epsilon v^\epsilon d x \; dt + \int_0^T \int_{\mathbb{T}^3} \Omega v^\epsilon u^\epsilon d x \; dt + \int_0^T \int_{\mathbb{T}^3} p^\epsilon \partial_x u^\epsilon d x \; dt + \int_0^T \int_{\mathbb{T}^3} p^\epsilon \partial_y v^\epsilon d x \; dt = 0.
\end{align*}
We observe that
\begin{equation*}
-\int_0^T \int_{\mathbb{T}^3} \Omega u^\epsilon v^\epsilon d x \; dt + \int_0^T \int_{\mathbb{T}^3} \Omega v^\epsilon u^\epsilon d x \; dt = 0.
\end{equation*}
Regarding the pressure terms, we obtain
\begin{align*}
&\int_0^T \int_{\mathbb{T}^3} p^\epsilon \partial_x u^\epsilon d x \; dt + \int_0^T \int_{\mathbb{T}^3} p^\epsilon \partial_y v^\epsilon d x \; dt \\
&= \int_0^T \int_{\mathbb{T}^3} p^\epsilon \partial_x u^\epsilon d x \; dt + \int_0^T \int_{\mathbb{T}^3} p^\epsilon \partial_y v^\epsilon d x \; dt + \int_0^T \int_{\mathbb{T}^3} p^\epsilon \partial_z w^\epsilon d x \; dt = 0.
\end{align*}
Concerning the advective terms, we only present the details in handling the integral $\int_0^T \int_{\mathbb{T}^3} (u \textbf{u})^\epsilon \cdot \nabla u^\epsilon d x \; dt$, as the derivation for the other advective term proceeds in a similar fashion. First we look at the first and second terms of the divergence, we get that
\begingroup
\allowdisplaybreaks
\begin{align*}
&\int_0^T \int_{\mathbb{T}^3} (u^2)^\epsilon \partial_x u^\epsilon d x \; dt + \int_0^T \int_{\mathbb{T}^3} (u v)^\epsilon \partial_y u^\epsilon d x \; dt \\
&= \int_0^T \int_{\mathbb{T}^3} \bigg[ u^\epsilon u^\epsilon + \int_{\mathbb{R}^3} \big( \varphi_\epsilon (y) (\delta u (-y; x,t) \otimes \delta u (-y; x,t) ) \big) dy - (u - u^\epsilon) \otimes (u - u^\epsilon) \bigg] \partial_x u^\epsilon d x \; dt \\
&+ \int_0^T \int_{\mathbb{T}^3} \bigg[ u^\epsilon v^\epsilon + \int_{\mathbb{R}^3} \big(\varphi_y (y) \delta u (-y; x,t) \otimes \delta v (-y; x,t) \big) dy  - (u - u^\epsilon) \otimes (v - v^\epsilon) \bigg] \partial_y u^\epsilon d x \; dt.
\end{align*}
\endgroup
In the above we have used commutator estimates as introduced in \cite{constantin,titi2018}. Then we do the estimates
\begingroup
\allowdisplaybreaks
\begin{align*}
&\int_0^T \int_{\mathbb{T}^3} \bigg[  \int_{\mathbb{R}^3} \big( \varphi_\epsilon (y) (\delta u (-y; x,t) \otimes \delta u (-y; x,t) ) \big) dy - (u - u^\epsilon) \otimes (u - u^\epsilon) \bigg] \partial_x u^\epsilon d x \; dt \\
&+ \int_0^T \int_{\mathbb{T}^3} \bigg[  \int_{\mathbb{R}^3} \big(\varphi_y (y) \delta u (-y; x,t) \otimes \delta v (-y; x,t) \big) dy  - (u - u^\epsilon) \otimes (v - v^\epsilon) \bigg] \partial_y u^\epsilon d x \; dt \\
&\leq \bigg[ \epsilon^{2 \alpha} \lVert u \rVert_{B^\alpha_z (B^\beta_h)}^2 + \epsilon^{2 \alpha} \lVert u \rVert_{B^\alpha_z (B^\beta_h)} \lVert v \rVert_{B^\alpha_z (B^\beta_h)} \bigg] \epsilon^{\alpha - 1} \lVert u \rVert_{B^\alpha_z (B^\beta_h)} \\
&\leq \epsilon^{3 \alpha - 1} \lVert u \rVert_{B^\alpha_z (B^\beta_h)}^2 \bigg[ \lVert u \rVert_{B^\alpha_z (B^\beta_h)} + \lVert v \rVert_{B^\alpha_z (B^\beta_h)} \bigg].
\end{align*}
\endgroup
Recall that we assumed that $\alpha > \frac{1}{3}$, therefore these terms go to zero as $\epsilon \rightarrow 0$.

Subsequently, we look at the third component. By using a commutator estimate, we see that
\begin{align*}
\int_0^T \int_{\mathbb{T}^3} (u w)^\epsilon \partial_z u^\epsilon d x \; dt &= \int_0^T \int_{\mathbb{T}^3} \bigg[ u^\epsilon w^\epsilon + \int_{\mathbb{R}^3} \big( \varphi_\epsilon (y) (\delta u (-y; x,t) \otimes \delta w (-y; x,t) ) \big) dy \\
&- (u - u^\epsilon) \otimes (w - w^\epsilon) \bigg] \partial_z u^\epsilon d x \; dt.
\end{align*}
We now make the estimate
\begin{align*}
&\int_0^T \int_{\mathbb{T}^3} \bigg[  \int_{\mathbb{R}^3} \big( \varphi_\epsilon (y) (\delta u (-y; x,t) \otimes \delta w (-y; x,t) ) \big) dy - (u - u^\epsilon) \otimes (w - w^\epsilon) \bigg] \partial_z u^\epsilon d x \; dt \\
&\leq 2\epsilon^\alpha \lVert u \rVert_{B^\alpha_z (B^\beta_h)} \cdot \epsilon^{\beta - 1} \lVert w \rVert_{B^{\alpha+1}_z (B^{\beta-1}_h)} \cdot \epsilon^{\alpha - 1} \lVert u \rVert_{B^\alpha_z (B^\beta_h)} \leq \epsilon^{2 \alpha + \beta - 2} \lVert u \rVert_{B^\alpha_z (B^\beta_h)}^2 \lVert w \rVert_{B^{\alpha+1}_z (B^{\beta-1}_h)}.
\end{align*}
Recall that we assumed that $2 \alpha + \beta > 2$, so therefore the terms go to zero as $\epsilon \rightarrow 0$. Thus we are left with the terms
\begin{align*}
&\int_0^T \int_{\mathbb{T}^3} \bigg( u^\epsilon \bigg[ u^\epsilon \partial_x u^\epsilon + v^\epsilon \partial_y u^\epsilon + w^\epsilon \partial_z u^\epsilon \bigg] \bigg) dx dt = \frac{1}{2} \int_0^T \int_{\mathbb{T}^3} \textbf{u}^\epsilon \cdot \nabla \big( (u^\epsilon)^2 \big) dx dt = 0.
\end{align*}
As a result, we get that
\begin{align*}
\big\lvert \lVert u^\epsilon (\cdot, T) \rVert_{L^2} + \lVert v^\epsilon (\cdot, T) \rVert_{L^2} - \lVert u^\epsilon (\cdot, 0) \rVert_{L^2} - \lVert u^\epsilon (\cdot, 0) \rVert_{L^2} \big\rvert \xrightarrow[]{\epsilon \rightarrow 0} 0.
\end{align*}
Hence energy is conserved.
\end{proof}
\end{appendices}
\bibliography{hydrostatic_Euler_final}

\end{document}